\documentclass[10pt]{article} 
\usepackage[accepted]{tmlr} 

\usepackage[utf8]{inputenc} 
\usepackage[T1]{fontenc}    
\usepackage{hyperref}       
\usepackage{url}            
\usepackage{booktabs}       
\usepackage{amsfonts}       
\usepackage{nicefrac}       
\usepackage{microtype}      
\usepackage{graphicx} 
\usepackage{amssymb}
\usepackage{amsmath}
\usepackage{amsthm}
\usepackage{multirow}
\usepackage{enumitem}
\usepackage{algorithm}
\usepackage{algpseudocode}
\usepackage{tikz}
\usepackage{placeins}
\usepackage{MnSymbol}

\newtheorem{theorem}{Theorem}
\newtheorem{proposition}[theorem]{Proposition}
\newtheorem{corollary}[theorem]{Corollary}
\theoremstyle{definition} 
\newtheorem{definition}[theorem]{Definition}

\newtheorem{lemma}[theorem]{Lemma}

\usetikzlibrary{shapes.geometric, arrows, calc, fit, positioning, chains, arrows.meta}

\tikzstyle{structure} = [rectangle, rounded corners, minimum width=2.8cm, minimum height=1.5cm, text centered, draw=black, fill=blue!20, align=center]

\tikzstyle{arrow} = [thick,->,>=stealth]

\newcommand{\innerprod}[2]{\langle #1,#2 \rangle}
\newcommand{\RFP}{\operatorname{RFP}}

\title{Pull-back Geometry of Persistent Homology Encodings} 
      
\author{\name Shuang Liang \email liangshuang@g.ucla.edu\\ 
\addr Department of Statistics \& Data Science\\
  UCLA 
   \AND 
 \name Renata Turke\v{s} \email renata.turkes@uantwerpen.be \\
\addr  Department of Mathematics \& Computer Science\\
  University of Antwerp 
  \AND
 \name Jiayi Li \email jiayi.li@g.ucla.edu\\
 \addr Department of Statistics \& Data Science\\
  UCLA 
  \AND 
 \name Nina Otter \email nina-lisann.otter@inria.fr\\
\addr DataShape, Inria-Saclay; \\
Laboratoire de Math\'ematiques d'Orsay, Universit\'e Paris-Saclay
  \AND
 \name Guido Mont\'ufar \email montufar@math.ucla.edu\\
\addr Departments of Mathematics and Statistics \& Data Science, UCLA;  \\ 
Max Planck Institute for Mathematics in the Sciences }


\begin{document}
\maketitle

\begin{abstract}%
Persistent homology (PH) is a method for generating topology-inspired representations of data. Empirical studies that investigate the properties of PH, such as its sensitivity to perturbations or ability to detect a feature of interest, commonly rely on training and testing an additional model on the basis of the PH representation. To gain more intrinsic insights about PH, independently of the choice of such a model, we propose a novel methodology based on the pull-back geometry that a PH encoding induces on the data manifold. 
The spectrum and eigenvectors of the induced metric help to identify the most and least significant information captured by PH. 
Furthermore, the pull-back norm of tangent vectors provides insights about the sensitivity of PH to a given perturbation, or its potential to detect a given feature of interest, and in turn its ability to solve a given classification or regression problem. 
Experimentally, the insights gained through our methodology align well with the existing knowledge about PH. Moreover, we show that the pull-back norm correlates with the performance on downstream tasks, and can therefore guide the choice of a suitable PH encoding. 

\emph{Keywords:} Persistent homology, data representation, Jacobian spectrum, pull-back geometry, sensitivity analysis 
\end{abstract}


\section{Introduction}

Persistent homology (PH) is a well-established technique in applied and computational topology \citep{Carlsson2009TopologyAD, oudot2015persistence}. 
At its core, PH seeks to create \emph{representations} of data that highlight topological aspects. Recently, they have been found to also capture purely geometric aspects of the data, such as curvature \citep{collins2004barcode,bubenik2020persistent} and convexity \citep{turkes2022on}. 
PH representations have been used particularly in applications where multiscale homological features can be expected to capture relevant information, such as in the prediction of biomolecular properties \citep{cang2017topologynet, cang2018representability, wang2020topology}, quantification of similarity in materials \citep{lee2017quantifying}, medical imaging \citep{medicalimaging}, or the analysis of tree-like brain artery structures \citep{bendich2016persistent}. PH also increasingly interfaces with machine learning \citep[see, e.g.,][]{pmlr-v108-carriere20a,frai2021}. Beside serving as a data representation technique, PH has been used, for instance, to investigate the decision boundaries of neural networks \citep{pmlr-v97-ramamurthy19a} or the transformations of data sets across the layers of a deep neural network \citep{naitzat2020topology}. Other work has explored training neural networks to approximate PH features \citep{JMLR:v20:18-358,montufar2020can,pmlr-v196-surrel22a}, which can facilitate faster computations or serve as a basis for fine-tuning PH representations. Along this thread, a topology-encoding neural network based on PH was proposed by \citet{DBLP:conf/cvpr/Haft-Javaherian20}. 

In applications of PH to data classification or regression tasks, it is common to employ a model, such as a support vector machine (SVM) or a neural network, on the PH features. Such performance-based testing comes with two main drawbacks. Firstly, additional time and effort are needed to choose the classifier or regression model and tune their hyperparameters: this typically involves a grid search over all PH parameters, \emph{but also} over models (e.g., SVM and neural networks), and over the model's parameters (e.g., regularization parameter of SVM, and a much larger list of hyperparameters for neural networks), which also requires training and testing for each combination of the three groups of parameter values. Secondly, the conclusions drawn regarding the effectiveness of PH are contingent upon the choice of the model and its specific parameters. 

In this work, we propose a novel methodology aimed at gaining a more intrinsic understanding of PH encodings, irrespective of a particular classification or regression model. Here, a PH \emph{encoding} denotes the mapping from data to a vectorized PH representation. We use the pull-back geometry induced by the PH encoding map to investigate its sensitivity to any particular data variation. A data variation is represented by a vector field in the data space, and can therefore be understood as an umbrella term that includes both perturbation vector fields reflecting data perturbations (e.g., translation or dilation of a point cloud) and gradient vector fields resulting from data features (e.g., a label indicating the presence of an anomaly or disease). 

The Jacobian of an encoding mapping characterizes the behavior of the encoding in response to data variations. Specifically, the rank and eigenvectors of the Jacobian characterize the number of independent data variations and the most significant data variations captured by the encoding, respectively. The average pull-back norm of a vector field quantifies the sensitivity of PH to the corresponding data variation, and thus it helps assess to what extent PH is sensitive to a given perturbation or how effective it is at detecting a given feature (which in turns translates to its ability to solve a given problem). Furthermore, optimizing the average pull-back norm can guide the choice of a suitable PH encoding (choice of filtration, PH representation, and their parameters): one only needs to evaluate the pull-back norm over the different choices of PH parameters. This approach eliminates the need to train and select a classifier on top of PH features, at the same time providing insights that are more intrinsic to the underlying problem. Indeed, if the performance of a particular model on PH features is poor, one can hardly make any claims about the \emph{PH representation itself} (since the problem could be that the model is poor). On the other hand, if the pull-back norm of the vector field is close to zero, we are more confident that the representation cannot recognize the given perturbation or feature. 
We provide a schematic diagram in Figure~\ref{fig:flowchart} that illustrates the pipeline of our proposed method and compares it with performance-based methods.

We center our attention on a widely used PH representation known as the persistence image (PI) \citep{adams2017persistence}, which is an image-like representation of the input data in terms of multiscale homological features. Our methodology, however, extends to other PH representations and, more broadly, to any differentiable encoding whose representation space can be endowed with a Riemannian manifold structure. In our experiments, we illustrate this generality by applying our approach to the PointNet encoding, a benchmark deep learning model for point clouds. 
We note that the insights about the (PH) encodings obtained through our approach depend on the specifics of the data set. 
Nonetheless, by evaluating different datasets one may be able to draw certain conclusions that hold with some generality: for instance, conclude that a particular encoding captures a particular feature in datasets of a particular type. This is an interesting prospect that can be facilitated by our proposed approach.

\newpage 
\paragraph{Main contributions}

\begin{itemize}[leftmargin=*]

\item We present an approach that can be used to investigate persistence images and their induced pull-back geometry on the manifold of input data sets in terms of the rank, spectrum, and eigenvectors of the Jacobian, as well as the pull-back norm of tangent vectors on the data manifold (Section~\ref{sec:ph-mapping}).\footnote{Data and code developed in this research are available at \url{https://github.com/shuangliang15/pullback-geometry-persistent-homology}.} 

\item We show how the above approach can be used to identify which data perturbations are captured by the encodings and which are ignored on given data sets. We also show how this facilitates an intrinsic comparison of PH encodings built with different filtrations. We experimentally demonstrate the insights gained via our approach align well with the existing knowledge about PH (Section~\ref{sec:identify}). 

\item We show how the above approach can be used to quantify to what extent a PH encoding can recognize a data feature of interest on given a data set (\textit{sex} feature in a data set of brain artery trees). We also show how this quantitative evaluation can guide the selection of hyperparameters for the encodings. Finally, we show that the pull-back norm is predictive for the performance on a downstream task (Section~\ref{sec:select}). 

\end{itemize}

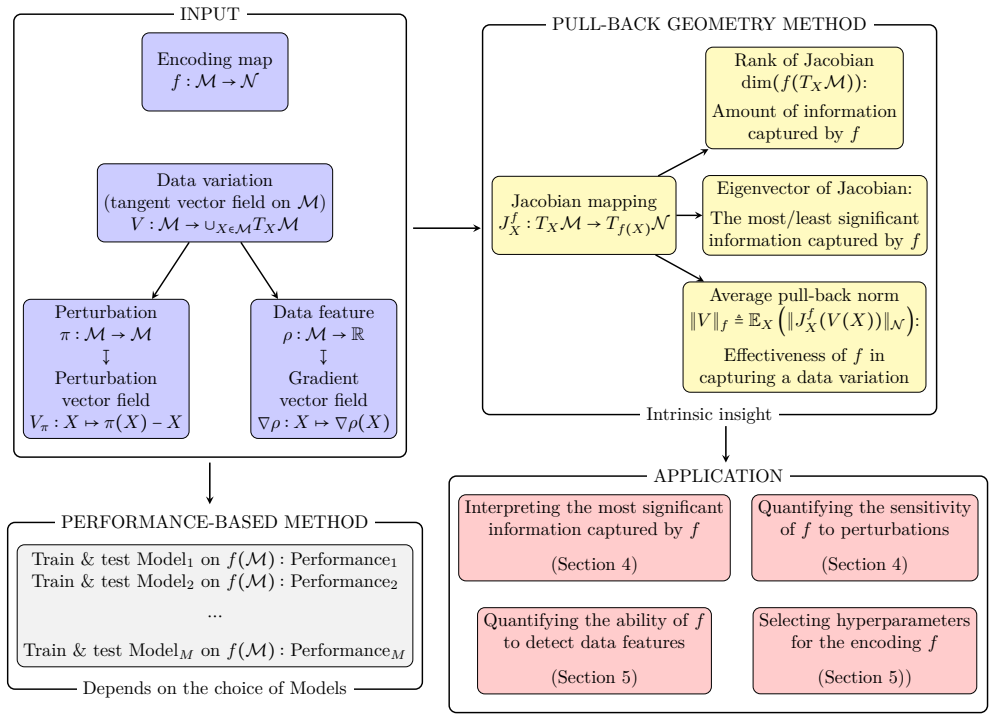
\begin{figure}[t]
    \centering
    \resizebox{.8\textwidth}{!}{
\begin{tikzpicture}[node distance=3.8cm]

\node (encoding) [structure] {Encoding map \\ $f: \mathcal{M}\rightarrow \mathcal{N}$};

\node (variation) [structure, below =1cm of encoding] {Data variation\\ (tangent vector field on $\mathcal{M}$)\\$V: \mathcal{M}\rightarrow \cup_{X\in\mathcal{M}}T_X\mathcal{M}$};
\node (perturbation) [structure, below left of = variation, xshift=.6cm, yshift=-.5cm] {Perturbation\\$\pi: \mathcal{M}\rightarrow\mathcal{M}$\\$\downmapsto$\\Perturbation\\vector field\\$V_\pi: X\mapsto \pi(X)-X$};
\node (gradient) [structure, below right of = variation, xshift=-.6cm, yshift=-.5cm] {Data feature\\$\rho:  \mathcal{M}\rightarrow\mathbb{R}$\\$\downmapsto $\\Gradient\\vector field\\$\nabla\rho: X\mapsto\nabla\rho(X)$};

\node (rank)[structure, right = 8.cm of encoding, fill=yellow!30,yshift=-.5cm]{Rank of Jacobian\\$\dim(f(T_X\mathcal{M}))$: \\\vspace{-.8em}\\  Amount of information\\captured by $f$};
\node (eigenvector)[structure, below = .5cm of rank, xshift=.2cm, fill=yellow!30]{Eigenvector of Jacobian: \\\vspace{-.8em}\\ The most/least significant\\information captured by $f$};
\node (jacobian)[structure, left = .5cm of eigenvector, fill=yellow!30]{Jacobian mapping\\$J^f_X: T_X\mathcal{M}\rightarrow T_{f(X)}\mathcal{N}$};
\node (pbnorm)[structure, below = .1cm of eigenvector, xshift=-.2cm, fill=yellow!30, yshift=-.4cm]{Average pull-back norm\\$\|V\|_f \triangleq \mathbb{E}_{X}\left( \|J_X^f(V(X))\|_\mathcal{N}\right)$: \\\vspace{-.5em}
\\  Effectiveness of $f$ in\\ capturing a data variation
}; 

\node (interpret)[structure, below = 4.6 cm of jacobian, fill=red!20, xshift=.2cm]{Interpreting the most significant\\information captured by $f$ \\\vspace{-.5em}\\(Section \ref{sec:identify})};
\node (sensitivity)[structure, right = .4 cm of interpret, fill=red!20]{Quantifying the sensitivity\\of $f$ to perturbations\\\vspace{-.5em}\\(Section \ref{sec:identify})};
\node (recognize)[structure, below = .5 cm of interpret, fill=red!20]{Quantifying the ability of $f$\\to detect data features\\\vspace{-.5em}\\(Section \ref{sec:select})};
\node (select)[structure, below = .5 cm of sensitivity, fill=red!20]{Selecting hyperparameters\\for the encoding $f$\\\vspace{-.5em}\\(Section \ref{sec:select}))};

\node (performance)[structure, below = 5.8 cm of variation, fill=gray!10]{Train \& test Model$_1$ on $f(\mathcal{M}):$ Performance$_1$ \\Train \& test Model$_2$ on $f(\mathcal{M}):$ Performance$_2$\\\vspace{-.5em}\\$\cdots$\\\vspace{-.5em}\\Train \& test Model$_M$ on $f(\mathcal{M}):$ Performance$_M$};

\node[draw, thick, rounded corners, inner xsep=.5em, inner ysep=1em, fit=(encoding)(variation)(perturbation)(gradient)] (box1) {};
\node[fill=white] at (box1.north) {INPUT};
\node[draw, thick, rounded corners, inner xsep=.5em, inner ysep=1.2em, fit=(rank) (jacobian) (pbnorm)(eigenvector)] (box2) {};
\node[fill=white] at (box2.north) {PULL-BACK GEOMETRY METHOD};
\node[fill=white] at (box2.south) {Intrinsic insight};

\node[draw, thick, rounded corners, inner xsep=.5em, inner ysep=1em, fit=(interpret) (sensitivity) (recognize)(select)] (box3) {};
\node[fill=white] at (box3.north) {APPLICATION};

\node[draw, thick, rounded corners, inner xsep=.5em, inner ysep=1.2em, fit=(performance)] (box4) {};
\node[fill=white] at (box4.north) {PERFORMANCE-BASED METHOD};
\node[fill=white] at (box4.south) {Depends on the choice of Models};

\draw [arrow] (variation) -- (perturbation);
\draw [arrow] (variation) -- (gradient);
\draw [arrow] (jacobian) -- (rank);
\draw [arrow] (jacobian) -- (eigenvector);
\draw [arrow] (jacobian) -- (pbnorm);

\draw [thick,->, >=stealth, line width=.3mm] (3.8,-3) -- (5.1,-3.);
\draw [thick,->, >=stealth, line width=.3mm] (9.8,-6.8) -- (9.8,-7.5);
\draw [thick,->, >=stealth, line width=.3mm] (-.1,-7.5) -- (-.1,-8.3);
\end{tikzpicture}
}
\caption{Schematic pipeline of our proposed method (comparing it with  performance-based testing). 
}
\label{fig:flowchart}
\end{figure}

\paragraph{Related work} Our discussion falls within the general subject of interpreting a complex nonlinear map by investigating the effect that local input perturbations have on the output. This is conceptually related to topics such as sensitivity analysis \citep{https://doi.org/10.1111/0272-4332.00040}, interpretable machine learning \citep{Samek2021ExplainingDN}, sensitivity of outputs to input perturbations \citep{Molnar2020InterpretableML}, activation maximization \citep{journals/corr/SimonyanVZ13}, relevance propagation \citep{MONTAVON20181}, adversarial robustness \citep{pmlr-v97-engstrom19a}, interpretable controls in implicit generative models \citep{NEURIPS2020_6fe43269}, or function parametrizations in artificial neural networks \citep{pmlr-v97-du19c,pmlr-v97-kornblith19a}.

\cite{hauser2017principles} investigated the Riemannian geometry on the data manifold that is induced by representations learned using artificial neural networks and show, in particular, that the metric tensor can be found by backpropagating the coordinate representations learned by the network. This shares similarities with our approach, as the induced geometry is essentially the pull-back geometry from representation space by the Jacobian map. \cite{meller2023singular} proposed a graph representation for neural networks using a singular value decomposition of the weight matrices. While they tackle the nonlinearity by studying linear maps contained in the nonlinear map consecutively, our emphasis lies in the local linear approximation of the nonlinear map.

There exist a few studies that investigate the sensitivity of PH representations to perturbations and their ability to recognize specific features, as we do in this work. For example, \cite{turkevs2021noise} study the sensitivity of a number of PH representations to different types of transformations (such as rotation, translation, change of image brightness or contrast, as well as Gaussian and salt and pepper noise). However, the main method to assess sensitivity is the performance of an SVM trained on the representations of the original data and tested on the representations of data under transformations, and thus requires training and testing, and depends on the choice of a particular classifier. 
\citet{bubenik2020persistent} showed in theory and experiments that persistence landscapes can be used to detect curvature of an underlying set based on a sampled point cloud. \cite{turkes2022on} conducted investigations towards identifying fundamental types of tasks for which PH representations might be most useful. They showed that beside curvature and number of holes, PH representations can be used for detecting convexity. However, they focus on three specific tasks (detecting number of holes, convexity, and curvature), whereas we study the alignment of the representations with \emph{any} given feature defined on the space of input data sets. 
Moreover, in that work, the performance of PH is experimentally evaluated via the SVM accuracy, whereas our approach does not require training and testing of any model. 

More generally, in the context of inverse problems in persistence theory, there are several lines of work that study conditions under which persistence diagram maps are surjective or injective; see the survey of \citet{10.1007/978-3-030-43408-3_16} and references therein.  Within this context, our work can be seen as providing a framework related to the study of the injectivity of specific PH encodings. 
The work of \cite{xenopoulos2022topological} deals with local explainability using topological representations. In contrast, we deal with the representations. 
\citet{mcguire2023neural} measured the dissimilarity between representations learned by neural networks trained on PH encodings and networks trained on raw data. They experimentally demonstrate that networks learn considerably different representations when processing PH encodings instead of raw data. \citet{rieck2023expressivity} compared the expressivity of PH against the Weisfeiler-Lehman hierarchy of graph isomorphism tests, and explored the potential of PH to capture certain graph structures and characteristic properties.  
Finally, an important line of work in the study of PH encodings is concerned  with developing computable notions of optimal representative cycles for persistent homology classes; see, e.g., the survey of \cite{Li21}.

\section{Preliminaries on Persistent Homology}
\label{sec:ph}

The key idea behind PH is to construct a filtration, i.e., a sequence of topological spaces by gradually adding simplices (vertices, edges, triangles, etc.) to the data, and to study the evolution of topological features (components, holes, voids, and higher-dimensional voids) across the filtration. In this section, we give a brief overview of the different filtrations that we consider in this work (Section~\ref{sec:filtrations}), and the persistence image that we will use to represent PH information (Section~\ref{sec:intro-PI}).

\subsection{Filtration}
\label{sec:filtrations}

Let $X = \{x_i \in \mathbb{R}^D \mid i=1,\ldots, N \}$ denote a point cloud consisting of $N$ points in $\mathbb{R}^D$. To build a filtration on $X$, we commonly construct a sequence of simplicial complexes on $X$. 

A simplicial complex can be thought of as a space obtained by taking a union of vertices, edges, triangles, tetrahedra and higher-dimensional simplices.
Formally, a collection $K \subset 2^X$ of subsets of $X$ is called a \emph{simplicial complex} if 
 $\sigma \in K$ and $\tau \subset \sigma$ imply $\tau \in K$.
An element $\sigma$ in a simplicial complex is called a $(|\sigma|-1)$-simplex, where $|\sigma|$ is the cardinality of $\sigma$. Specifically, one can think of $0$-simplices as vertices (i.e., elements of $X$); $1$-simplices as edges (i.e., pairs of elements of $X$); $2$-simplices as triangles, etc. We note that according to this definition of a simplicial complex, not every element of $X$ is necessarily a $0$-simplex; this is important for the types of filtrations that we consider in our work, such as the DTM filtration. 

In the following we focus on simplicial complexes obtained as the clique complex of an $R$-neighborhood graph. The \emph{clique complex of the $R$-neighborhood graph} on a point cloud $X$ consists of all subsets $\sigma$ of $X$ such that the distance between any pair of points in $\sigma$ is at most $R$:
$$ \text{Cl}(X,R) = \{\sigma \in 2^X \mid B_E(x_i,R)\cap B_E(x_j,R) \neq \varnothing, \forall x_i, x_j \in \sigma \},$$
where $B_E(x,R)=\left\{y \in \mathbb{R}^D \mid d_E(x, y) \leq  R\right\}$ denotes the Euclidean $R$-ball centered at $x$ and $d_E$ denotes the Euclidean distance.

A \emph{filtration} with respect to the clique complex is an indexed collection $\{K_r\}_{r\in \mathbb{R}^{\geq0}}$ of subsets $K_r \subset \text{Cl}(X,R)$ satisfying the condition that $K_{r_1} \subset K_{r_2}$ if $r_1 \leq r_2$. The construction of a filtration is equivalent to assigning a \emph{filtration value} $\phi(\sigma)$ to each simplex $\sigma$ in $\text{Cl}(X,R)$ in the following sense. Given $\{K_r\}_{r\in \mathbb{R}^{\geq0}}$, one can define the filtration value for any simplex $\sigma$ as $\phi(\sigma) = \inf\{r: \sigma \in K_r\}$. Conversely, given a filtration value for every simplex, one can define the collection of subsets as $K_r = \{\sigma \in \text{Cl}(X,R): \phi(\sigma) \leq r\}$. 

We will focus on some common filtrations built upon $\text{Cl}(X,R)$: 
\begin{enumerate}[leftmargin=*]

\item The Vietoris-Rips filtration \citep{vietoris1927hoheren}. This defines the filtration value for each simplex as its diameter: $\phi(\sigma)=\mathrm{Diam}(\sigma)=\max_{x,y\in \sigma} d_E(x,y)$.

\item The distance-to-measure (DTM) filtration \citep{anai2020dtm}. This defines the filtration value for each vertex as $\phi(\{x_i\}) = \frac{1}{K}\sum_{x_j \in  \text{KNN}(x_i)} d_E(x_i, x_j)$, where $\text{KNN}(x_i)$ denotes the set of $k$ nearest neighbors of $x_i$ in $X$, so that the outliers have a large filtration function value and appear late in the filtration. Then it defines the filtration value for edges as $\phi(\{x_i,x_j\})=\phi(\{x_i\})+\phi(\{x_j\})+d_E(x_i,x_j)/2$, and for simplices 
with degree $(|\sigma|-1)$ greater than one as $\phi(\sigma)=\max_{x_i,x_j \in \sigma}\{\phi(\{x_i,x_j\})\}$. 

\item The height filtration with respect to a hyperplane. Let $v\in\mathbb{R}^n$ be the unit normal vector of a hyperplane. The corresponding height filtration defines the filtration value of vertices as $\phi(\{x_i\}) = \langle x_i,v\rangle$, and the filtration value of any other simplices as $\phi(\sigma) = \max_{x\in \sigma}\{\phi(\{x\})\}$.  We note that this means that, in order to capture features of interest, one needs to set the maximum length for an edge to be present in the height filtration (for details, see Appendix \ref{app:sec:identify}). 
\end{enumerate}

\subsection{Persistence image}
\label{sec:intro-PI}

The $k$-dimensional homology group, or $k$-dimensional homology, of a simplicial complex characterizes the $k$-dimensional holes in the complex. Each non-zero $k$-dimensional homology class in the $k$-dimensional homology group uniquely characterizes a $k$-dimensional hole. 
As we introduced earlier, the set $K_r$ includes more and more simplices as the parameter $r$ increases. In persistent homology, we are interested in how the homology groups of $K_r$ change as we vary the parameter $r$. 
By the matrix reduction algorithm \citep{edelsbrunner2002topological}, 
one can identify a birth-death pair $(b,d)\in [0,R]^2$ for every non-trivial homology class that appears in the filtration. Roughly, the homology class first ``appears'' in $K_b$ and ``persists'' until $K_d$, degenerating to the trivial class afterwards. In addition to the bounded intervals, the matrix reduction algorithms gives  infinite intervals $(b,\infty)$, which one can interpret as  homology classes that appear at $K_b$ and do not degenerate to the trivial class for any filtration value considered. The persistence diagram (PD) is a summary of such information. 

Formally, the $k$-dimensional PD is the multiset of birth-death pairs for all $k$-dimensional homology classes that appear in the filtration. Although the space of PDs can be endowed with a metric structure, PDs do not lend themselves to processing with techniques that require a Hilbert space structure, including support vector machines and principal component analysis (PCA) \citep{reininghaus2015stable}. Hence, one often considers vector representations of PDs. The most commonly used ones include persistence images \citep{adams2017persistence} and persistence landscapes \citep{bubenik2015statistical}. 

In our work, we will focus on persistence images (PIs). Let $\text{PD}$ be a $k$-dimensional persistence diagram in birth-death coordinates. One converts this to a multiset $\eta(\text{PD})$
in birth-lifespan coordinates by applying the linear map $\eta(b,d)=(b,l)$ with $l=d-b$ to each birth-death pair $(b,d)$.  Given an infinite interval $(b,\infty)$, in practice, one sets the death value to the maximum filtration value $R$. Given a kernel function $g_{(b,l)}(x,y)$ on $\mathbb{R}^2$ and a weighting function $\alpha(b,l)$, the persistent surface is the function $\psi: \mathbb{R}^2 \rightarrow \mathbb{R}$ defined by 
$$
\psi(x,y) = \sum_{(b,l)\in \eta(\text{PD})} \alpha(b,l) g_{(b,l)}(x,y).
$$ 
A persistence image (PI) is a finite-dimensional representation of $\psi$ obtained as follows. One splits a subdomain of $\psi$ by a $P \times P$ grid of regions. Then the PI of resolution $P$ is the matrix whose $(i,j)$-th entry or pixel is the integration value of $\psi$ over the $(i,j)$-th region. Note that, since the death time $d$ cannot be smaller than the birth time $b$, the birth-death pairs $(b,d)$ always lie above the diagonal line, i.e., $\text{PD}\subseteq \{(x,y)\in\mathbb{R}^2: y\geq x \geq 0\}$. The transformed birth-lifespan pairs $(b,l)$ lie in the first quadrant, i.e., $\eta(\text{PD})\subseteq \{(x,y)\in\mathbb{R}^2: x\geq 0, y \geq 0\}$ (see an illustration in Figure \ref{fig:PI-ppl}).

\begin{figure}[ht]
  \centering
  \includegraphics[width=1.\textwidth]{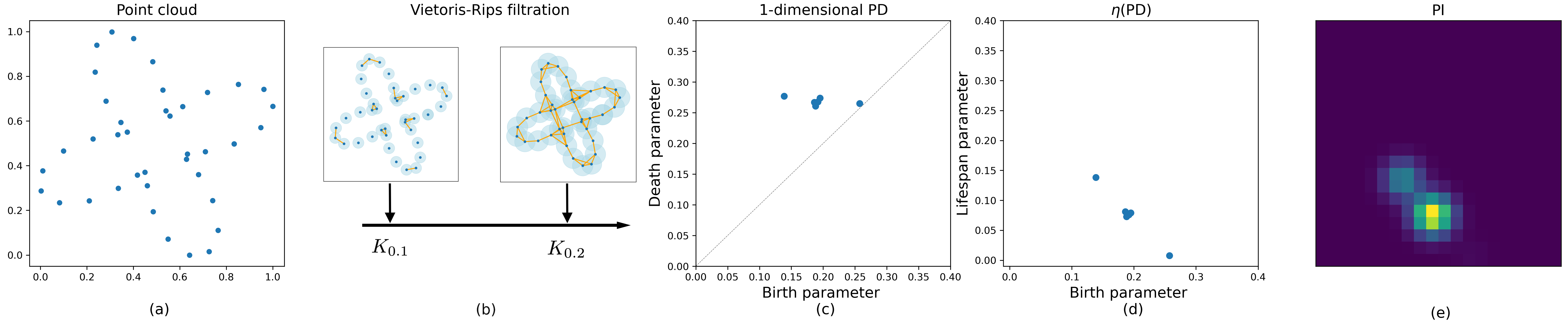} 
  \caption{The pipeline for constructing a persistence image described in Section~\ref{sec:intro-PI}. From left to right: (a) input point cloud; (b) Vietoris-Rips filtration built on the point cloud; (c) 1-dimensional persistence diagram; (d) birth-lifespan pairs (transformed 1-dimensional persistence diagram); and (e) persistence image.}
  \label{fig:PI-ppl}
\end{figure}

For constructing PIs, one needs to choose 1) the resolution $P$, 2) the kernel function $g_{(b,l)}(x,y)$ and its associated parameters, and 3) the weighting function $\alpha(b,l)$. One of the main difficulties in working with PIs is that there is no canonical way to choose these hyperparameters \citep{adams2017persistence}. \cite{adams2017persistence} studied the effects of PI parameters on the performance of certain classifiers ($K$-medoids classifiers) that take PIs as inputs. However, we note this approach heavily depend on the choice of downstream model. This motivates us to investigate what kind of information of the input data is intrinsically captured by the PI under different choices of the hyperparameters. 

A motivation for considering PIs is that they provide differentiable PH representations,\footnote{Precisely, the map from point clouds to PIs is generically differentiable (see a detailed discussion in Appendix~\ref{app:pi-differentiability}).} and that, with an appropriate choice of metric, the space of PIs has a Euclidean structure, which simplifies computations (see Section \ref{sec:ph-mapping-input-output}). 
We will later consider derivatives of the mapping from input data to PIs. These can be obtained using existing automatic differentiation packages and libraries, such as topologylayer \citep{pmlr-v108-gabrielsson20a} and Gudhi \citep{gudhi:urm}. We provide further details about this in Appendix~\ref{app:pi-differentiability}. 

\section{Methods: Sensitivity of PH Encoding to Data Variations} 
\label{sec:ph-mapping}

We consider an encoding map $f: \mathcal{M}\rightarrow \mathcal{N}$, where $\mathcal{M}$ is a space of point clouds and $\mathcal{N}$ is the space of persistence images (Section~\ref{sec:ph-mapping-input-output}). 
We conceptualize input data variations (Section~\ref{sec:data-variations}) and the resulting changes of the encoding output (Section~\ref{sec:jac}). 

\subsection{Input space and output space}
\label{sec:ph-mapping-input-output}

We let $\mathcal{M}$ be the space of point clouds in $\mathbb{R}^D$ that contain exactly $N$ points, 
$$
\mathcal{M} = \{ X \subset \mathbb{R}^D : |X|=N \}. 
$$ 
A point cloud $X$ is a finite subset in $\mathbb{R}^D$ with cardinality $|X| = N$. 
A point cloud may be regarded as an unordered list of points, determined only up to permutation. 
It can also be regarded as a probability distribution on $\mathbb{R}^D$. 
Hence we can equip $\mathcal{M}$ with the $2$-Wasserstein distance \cite[see, e.g.,][]{peyre2019computational}: 
$$
d_W(X,Y) =  \min_{ \omega \in \Omega(X,Y)} \left(\sum_{x\in X} d^2_E(x, \omega(x)) \right)^{\frac{1}{2}}. 
$$
Here $\Omega(X,Y)$ denotes the set of bijections $\omega: X\to Y$ between the sets $X$ and $Y$, and $d_E$ denotes the Euclidean distance on $\mathbb{R}^D$. 
The $2$-Wasserstein distance induces a metric topology on $\mathcal{M}$. 
Further, $\mathcal{M}$ can be endowed with a Riemannian manifold structure with $\dim(\mathcal{M})\triangleq m = D\times N$. 
For simplicity of presentation, in the following we treat $\mathcal{M}$ as an Euclidean space. Nonetheless, our discussion is consistent with the Riemannian manifold structure and applies in that level of generality (see Appendix~\ref{app:mnfd-point-cloud}). 
For a detailed discussion regarding the Riemannian structure and differential calculus on a Wasserstein space we refer the reader to \citet[Chapter 8,][]{ambrosio2005gradient} and \cite{villani2009optimal}.

We let $\mathcal{N}$ be the space of persistence images of fixed resolution $P$. Thus we can interpret $\mathcal{N}$ as a submanifold embedded in $\mathbb{R}^{P\times P}$ and endowed with the canonical Euclidean distance, with $\dim(\mathcal{N})\triangleq n =P^2$.\footnote{We note that if one considers the 1-Wasserstein distance on the space of PDs, and any of the $L_1,L_2$ or $L_\infty$ norms on the space of PIs, then PIs are known to be stable \citep[Theorem 5]{adams2017persistence}. On the other hand, PIs, together with the $L_2$ norm, are unstable if one instead considers $p$-Wasserstein distances on the space of PDs \cite[Remark 6]{adams2017persistence}.} 
Here again, other choices of  metric on the space of PIs are possible.\footnote{This can be of interest to try to establish more general stability results for PIs. An example would be a Wasserstein distance between persistence images assigning an appropriate cost to $b$ and $l$ directions.}

\subsection{Data variations}
\label{sec:data-variations}

To characterize local variations of the input data, we consider tangent vectors on the data manifold. We conceptualize the intuitive concepts of perturbations and feature variations in terms of corresponding vector fields on the data manifold. 

The \emph{tangent space} at $X\in\mathcal{M}$, denoted $T_X\mathcal{M}$, is the vector space of all vectors emanating from $X$ and tangential to the data manifold $\mathcal{M}$. The dimension of $T_X\mathcal{M}$ is equal to the dimension of the data manifold, $\dim(T_X\mathcal{M}) = \dim(\mathcal{M})$. Each \emph{tangent vector} $v\in T_X\mathcal{M}$ characterizes a local variation of a single point cloud $X$. A \emph{vector field} specifies a variation for each point cloud in $\mathcal{M}$. 
More specifically, a vector field $V$ on $\mathcal{M}$ is a smooth map $V: \mathcal{M}\rightarrow \sqcup_X T_X \mathcal{M}$, assigning to each $X$ in $\mathcal{M}$ a tangent vector $V(X) \in T_X\mathcal{M}$.

A \emph{perturbation} is a modification of a point cloud in the data manifold, e.g., by rotation or shearing. This can be described by a map $\pi: \mathcal{M}\to\mathcal{M}$ taking data $X\in\mathcal{M}$ to a perturbed data $\pi(X)\in \mathcal{M}$. 
The perturbation vector field $V_\pi$ associates to each $X\in \mathcal{M}$ a tangent vector $V_\pi(X)$ capturing the difference between $\pi(X)$ and $X$ (see Figure~\ref{fig:vector-fields}, left).\footnote{Figure~\ref{fig:vector-fields} is a schematic illustration. It is not intended to imply that the kind of depicted point clouds indeed form a torus.}

\begin{figure}
\centering
\begin{minipage}{5.5in}
\begin{tikzpicture}
    \begin{scope}[shift={(0.5\textwidth,0)}]
        \node [above right, inner sep=0] (image) at (0,0) {
            \includegraphics[width=.45\textwidth]{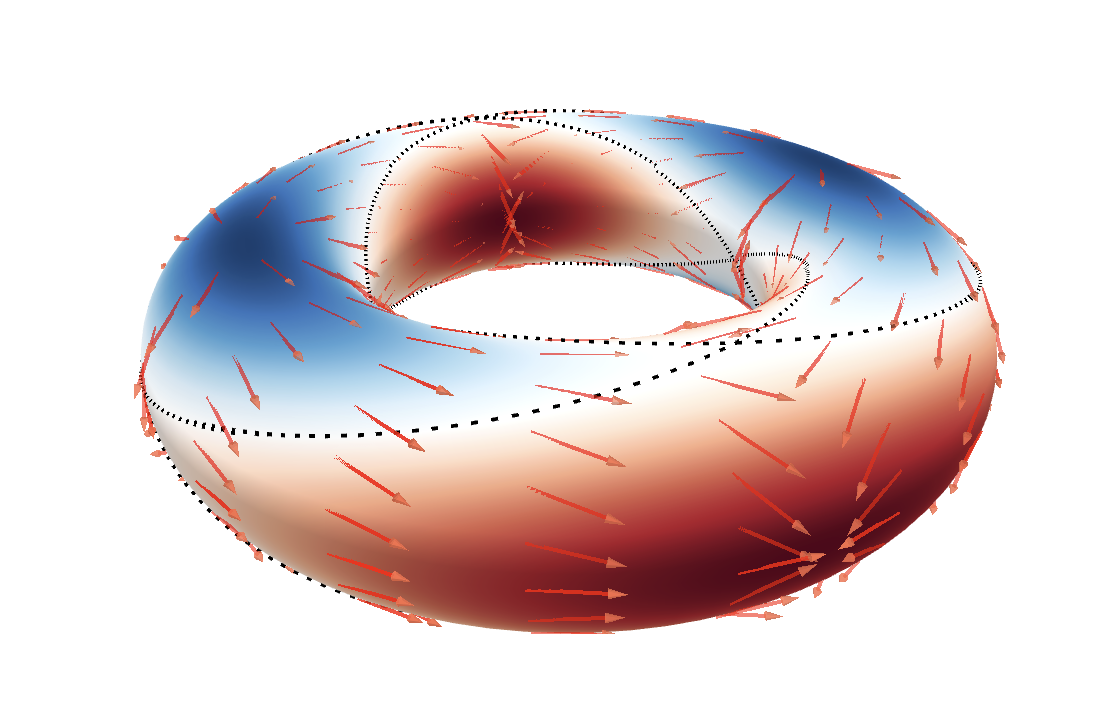}
        };
        \begin{scope}
            \node[inner sep=0pt] at (1.,3.8) {\includegraphics[width=.1\textwidth]{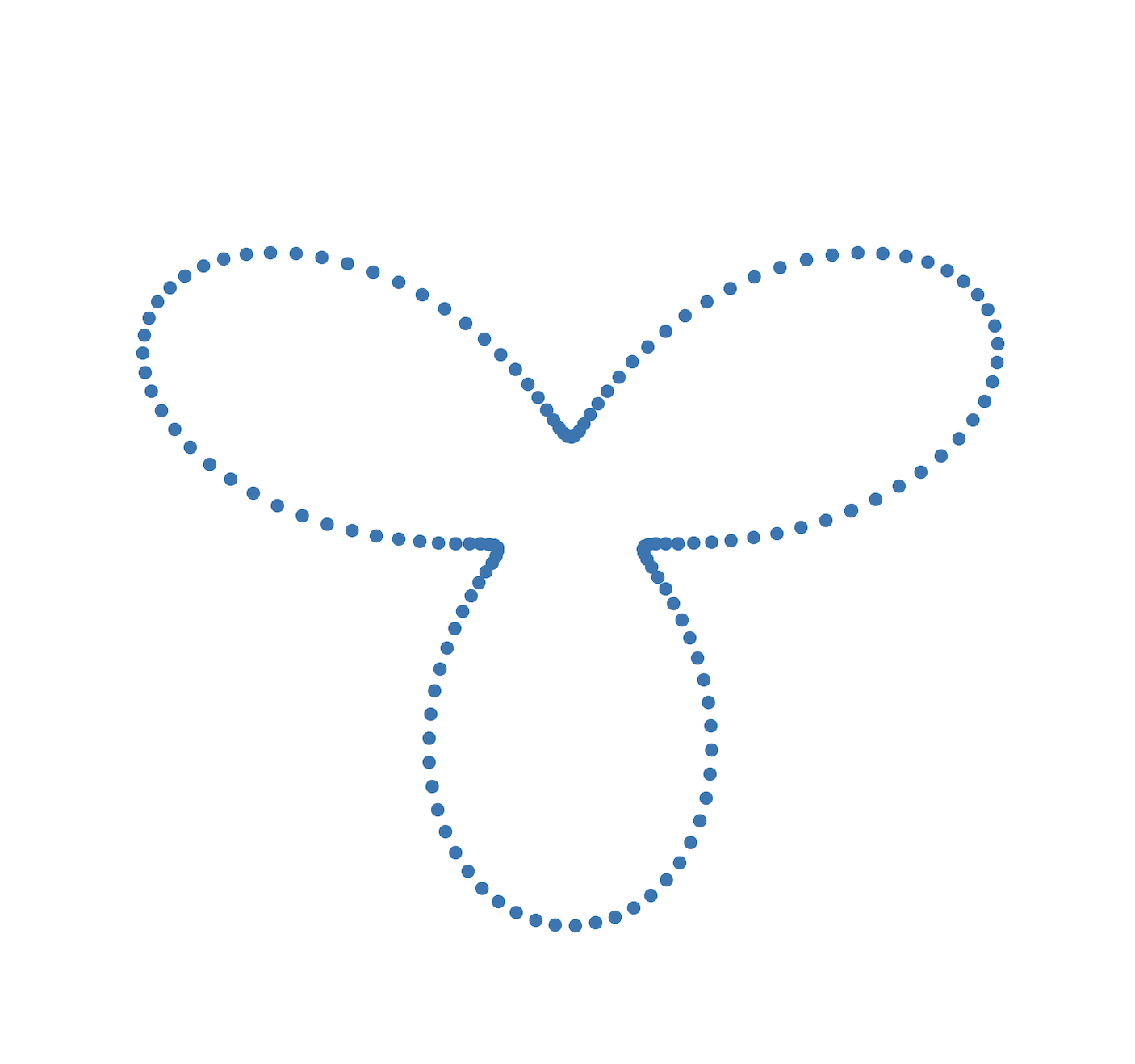}};
            \draw[stealth-, thick, black] (1.5,2.5) -- ++(-0.3,0.7);
            \node at (1.54, 2.46)[circle,fill,inner sep=.8pt]{};
        \end{scope}
    \end{scope}
    \begin{scope}[shift={(0,0)}]
        \node [above right, inner sep=0] (image) at (0,0) {
            \includegraphics[width=.45\textwidth]{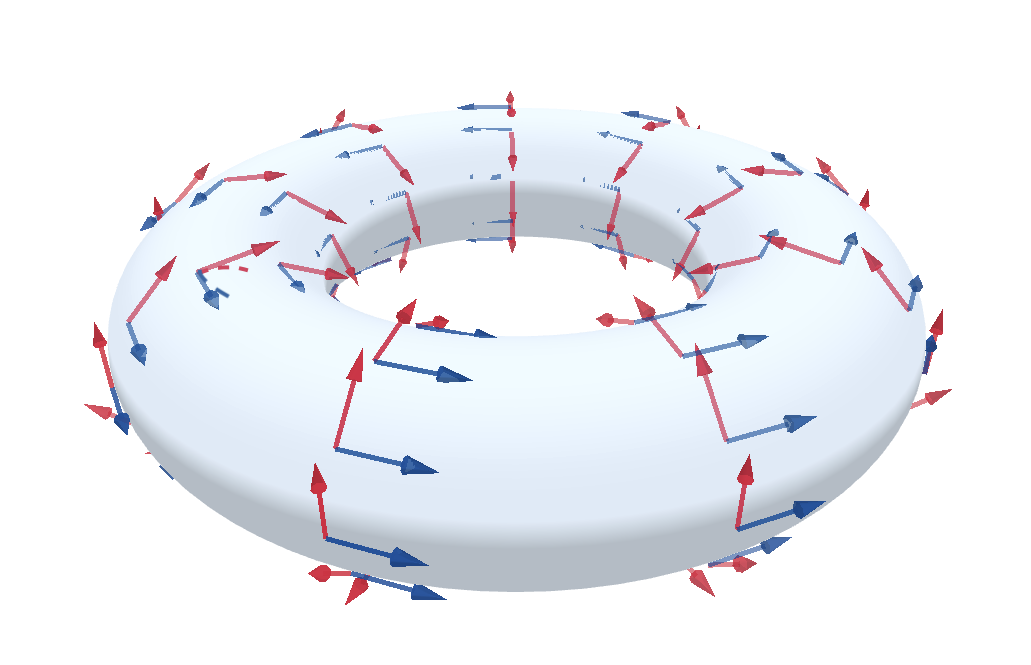}
        };
        \begin{scope}
            \node[inner sep=0pt] at (.68,3.7) {\includegraphics[width=.1\textwidth]{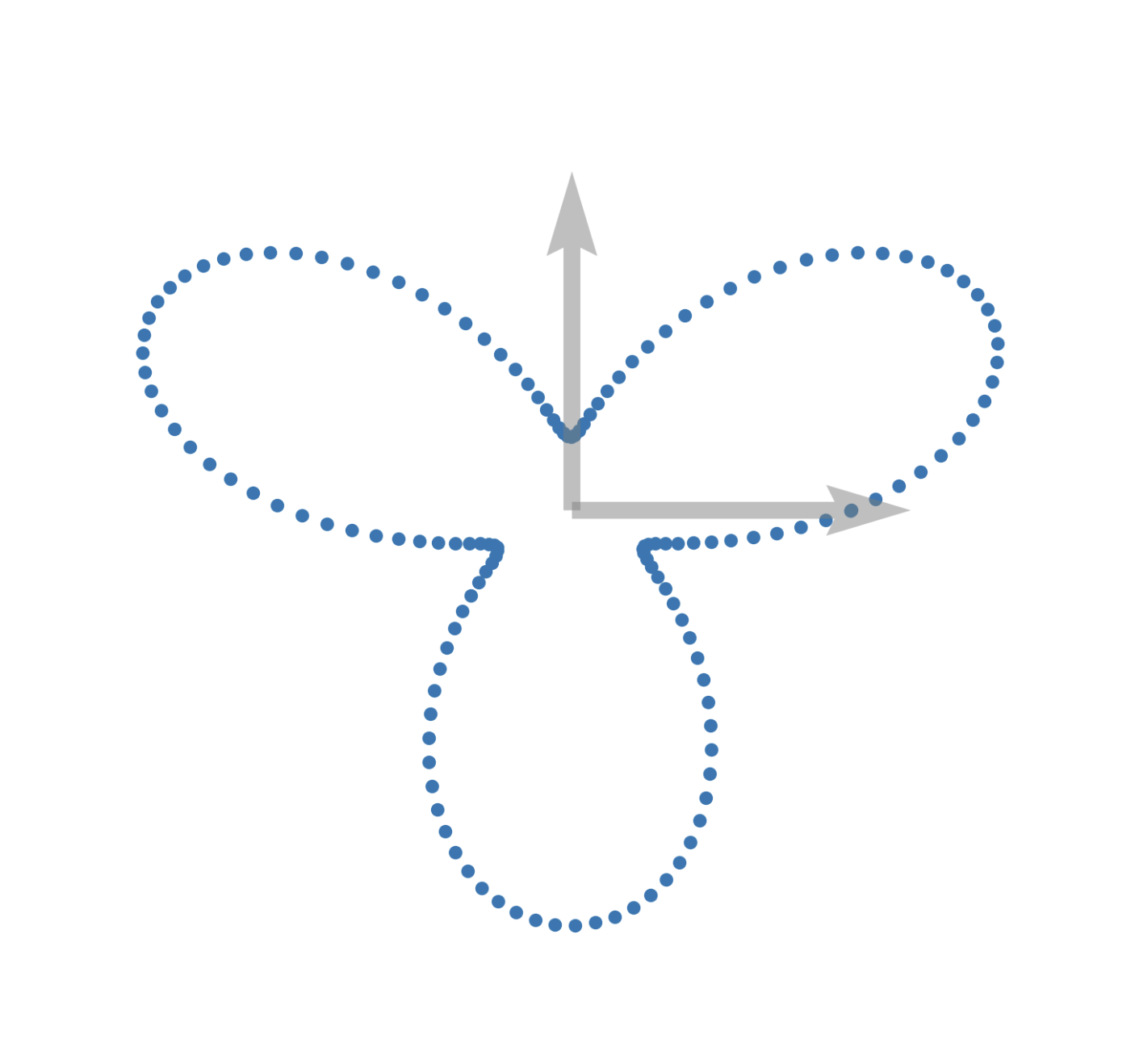}};
            \draw[stealth-, thick, black] (1.2,2.43) -- ++(-0.3,0.7);
            \node at (1.2,2.42)[circle,fill,inner sep=.8pt]{};
            \node[inner sep=0pt] at (2.,4.1) {\includegraphics[width=.1\textwidth]{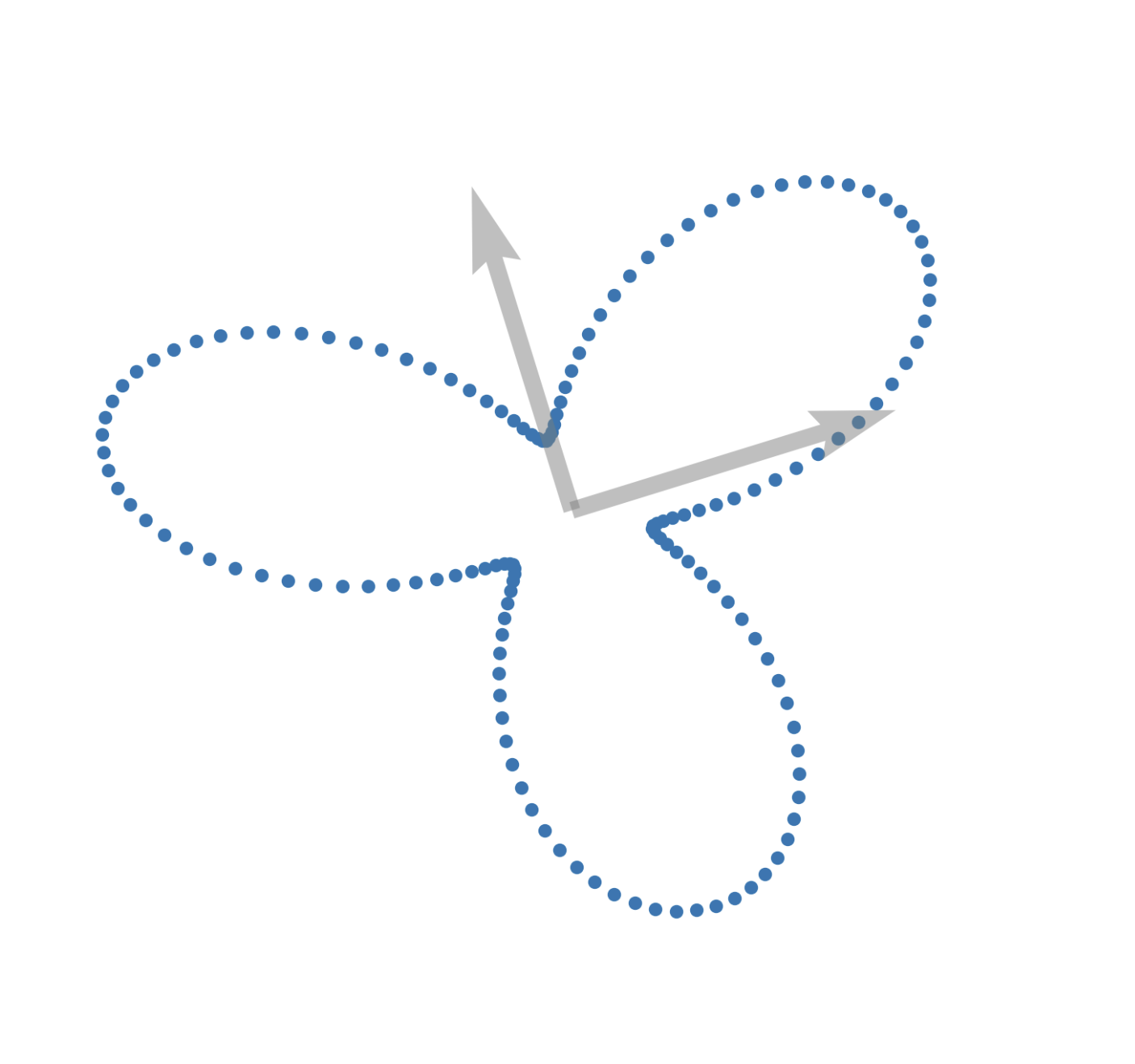}};
            \draw[stealth-, thick, black] (1.62,2.6) -- ++(0.35,.9);
            \node at (1.6,2.41)[circle,fill,inner sep=.8pt]{};
            \node[inner sep=0pt] at (.48,.48) {\includegraphics[clip=true, trim=0cm 0cm 0cm 3cm,width=.1\textwidth]{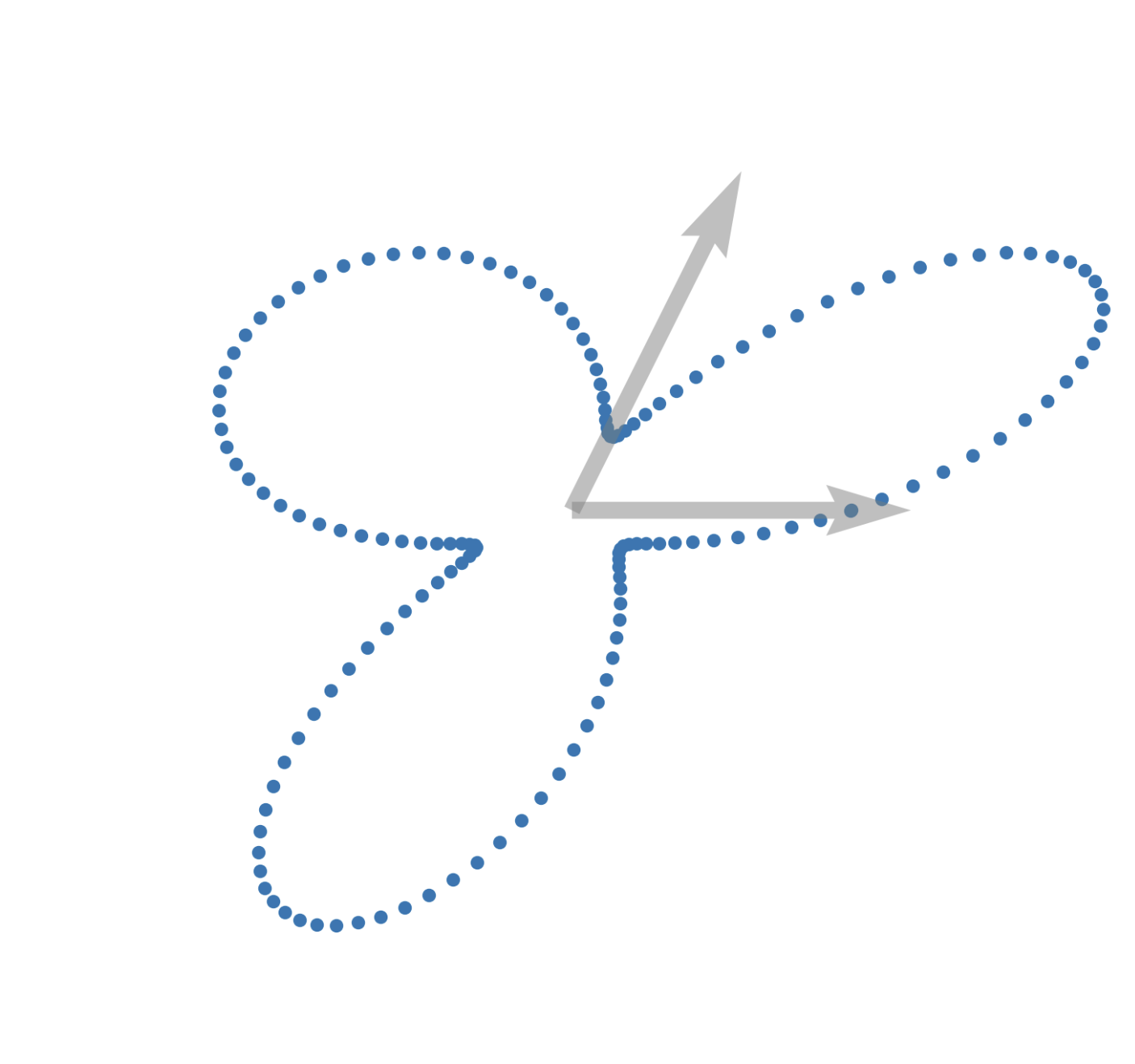}};
            \draw[stealth-, thick, black] (1.4,2.13) -- ++(-0.6,-1.26);
            \node at (1.5,2.22)[circle,fill,inner sep=.8pt]{};
        \end{scope}
    \end{scope}
\end{tikzpicture}
\end{minipage}
\caption{The space of point clouds forms a manifold, which in this figure is depicted as a torus; each point on this manifold is a point cloud. 
Left: vector fields on the data manifold correspond to variations of the point clouds; 
in this illustration, the red arrows correspond to ``rotation'' and the blue arrows to ``shearing''. Right: a continuous feature on the data manifold induces a gradient vector field; the figure illustrates a binary feature, where the dashed line is the class boundary, and the continuous feature value represents the probability of the data point belonging to the ``red'' class. } 
\label{fig:vector-fields}
\end{figure}

\begin{definition} [Perturbation vector field]
\label{def:perturb} 
Let $\mathcal{M}$ be a manifold, $T_X\mathcal{M}$ the tangent space at $X \in \mathcal{M}$, and $\pi : \mathcal{M}\rightarrow \mathcal{M}$ a perturbation map. The perturbation vector field induced by $\pi$ is defined as   
$$
V_\pi: \mathcal{M}\rightarrow \sqcup_X T_X \mathcal{M}; \quad X\mapsto V_\pi(X)=\pi(X)-X.
$$
\end{definition}

A \textit{feature} $\rho$ is a real-valued smooth function defined on the data manifold, $\rho: \mathcal{M} \rightarrow \mathbb{R}$, assigning a feature value to each $X$. Discrete-valued (categorical) features can be converted to continuous ones by considering probability distributions or logits of the feature values. 
For instance, the ``cat-or-dog'' feature can be converted to a continuous feature $\rho(X)=\text{Prob}(X \text{ is cat})\in [0,1]$. 

The gradient of a feature introduces a vector field on the data manifold. The gradient vectors point in the direction of steepest increase of the feature, with magnitude indicating the rate (see Figure~\ref{fig:vector-fields}, right). 

\begin{definition}[Gradient vector field] 
\label{def:gradient}

Let $\mathcal{M}$ be a manifold, $T_X\mathcal{M}$ the tangent space at $X \in \mathcal{M}$, and $\rho : \mathcal{M}\rightarrow \mathbb{R}$ a real-valued feature. The gradient vector field of $\rho$ is the vector field on $\mathcal{M}$ defined as 
$$
\nabla \rho: \mathcal{M} \rightarrow \sqcup_X T_X \mathcal{M};\quad X\mapsto \nabla \rho (X),
$$    
such that $\frac{\nabla \rho (X)}{\|\nabla \rho (X)\|}= \mathrm{argmax}_{v\in T_X\mathcal{M}: \|v\|=1}\lvert \frac{\partial}{\partial v}\rho(X) \rvert$ and $\|\nabla \rho (X)\| = \max_{v\in T_X\mathcal{M}: \|v\|=1}\lvert \frac{\partial}{\partial v}\rho(X) \rvert$. Here $\frac{\partial}{\partial v}$ is the directional derivative 
along $v$. 
\end{definition}

Definition \ref{def:perturb} and Definition \ref{def:gradient} are given for the case that $\mathcal{M}$ is a Euclidean space. 
We provide definitions of perturbation vector fields and gradient vector fields for the case of general Riemannian manifolds in Appendix~\ref{app:vector-fields}. 
Further, we provide details on how to estimate such vector fields using finite data sets in Appendices~\ref{app:sec:identify} and \ref{app:sec:select}.

\subsection{Encoding variations} 
\label{sec:jac}

Having characterized data variations in terms of vector fields, the next step is to describe the behavior of the encoding map $f$ in response to these variations. Specifically, we are going to introduce the average pull-back norm of a vector field to quantify the sensitivity of the encoding map to the corresponding data variation. At the outset of this subsection, we emphasize that whether or not it is desirable to have an encoding that is sensitive to a particular data variation depends on the specific practice scenario and whether this variation is perceived as valuable information or as noise that one would like to filter out in the encoding.

\paragraph{Jacobian} The \textit{Jacobian} of an encoding map $f$, denoted by $J_X^f$, is a linear transformation between tangent spaces that characterizes the local behavior of $f$. While a tangent vector $v\in T_X\mathcal{M}$ describes one type of data variation at $X$, the image tangent vector $J_X^f(v)$ describes the resulting 
variation of the encoding $f(X)$, 
$$
J_X^f: T_X\mathcal{M} \rightarrow T_{f(X)}\mathcal{N};\quad v\mapsto J_X^f(v).
$$
We may write this linear transformation in terms of a Jacobian matrix $J_X^f \in \mathbb{R}^{n\times m}$ with respect to a basis. If there is no risk of confusion, we will omit the super-/subscripts $f$ and $X$. We provide a visualization for the Jacobian map in the left panel of Figure~\ref{fig:jac-and-pbnorm}. 

The rank of the Jacobian is the dimension of the image of $T_X\mathcal{M}$ under the Jacobian map, $\text{rank}(J_X^f) = \dim(J_X^f(T_X\mathcal{M}))$. It corresponds to the number of degrees of freedom of the data 
that are captured by the encoding. 
For instance, $\text{rank}(J_X^f) = \dim(T_X\mathcal{M})$ indicates that $f$ is sensitive to all local data variations, whereas $\text{rank}(J_X^f)=0$ means that $f$ is approximately invariant under all local variations and thus approximately constant near $X$. 

\begin{figure}[h]
    \centering
    \includegraphics[width=0.45\textwidth]{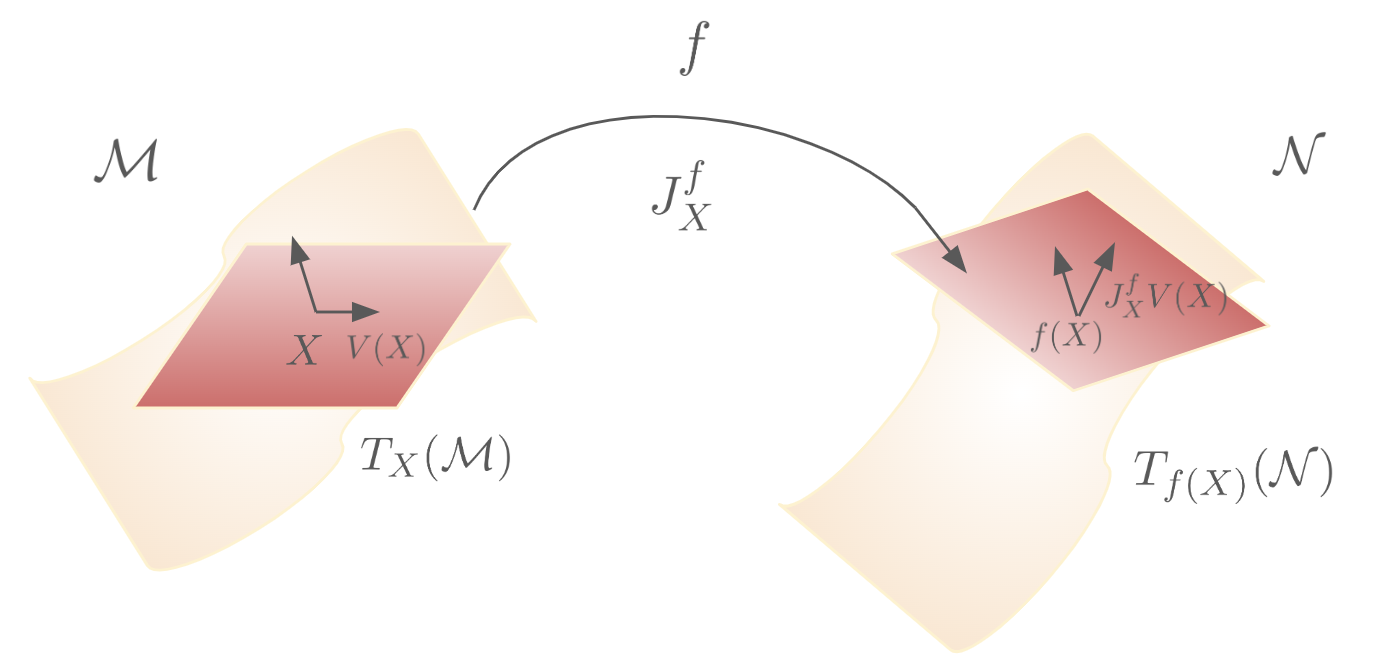}
    \hspace{0.05\textwidth}
    \includegraphics[width=0.45\textwidth]{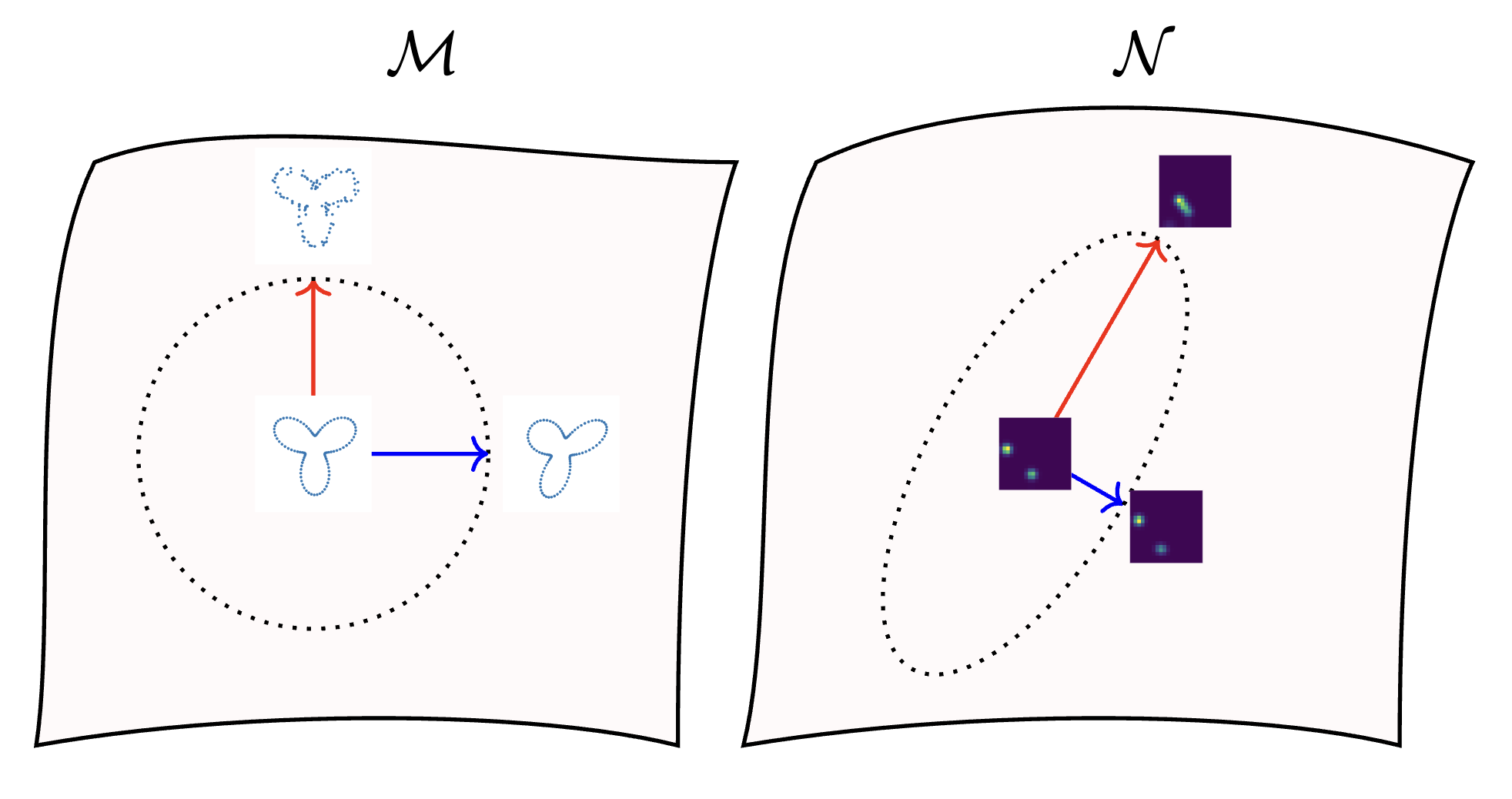}
    \caption{A visualization of the Jacobian map and the pull-back norm. Here $f$ denotes an encoding map from the input space $\mathcal{M}$ to the output space $\mathcal{N}$. Left: the Jacobian of the encoding sends tangent vectors in the tangent space $T_X\mathcal{M}$ of $\mathcal{M}$ to tangent vectors in the tangent space $T_{f(X)}\mathcal{N}$ of $\mathcal{N}$. 
    Right: the pull-back norm of a tangent vector on $\mathcal{M}$ measures by what amount the output of the encoding would change in response to the variation of the input by that tangent vector. 
    In this schematic illustration, the pull-back norm of the red vector (``noising'') is larger than the pull-back norm of the blue vector (``shearing''). 
    }
    \label{fig:jac-and-pbnorm}
\end{figure}

\paragraph{Pull-back norm} To measure the encoding's effectiveness in capturing a data variation, we introduce the average pull-back norm of a vector field. The \textit{pull-back norm} of a tangent vector $V(X)$ at $X$ is defined as\footnote{Strictly speaking this is a semi-norm, as it may vanish for non-zero tangent vectors.} 
$$ 
\|V(X)\|_f=\|J^f_X(V(X))\|_{\mathcal{N}}=\sqrt{ V(X)^T \cdot G^f_X \cdot V(X)}. 
$$ 
Here $\|\cdot\|_{\mathcal{N}}$ denotes the vector norm in output space $\mathcal{N}$ and $G^f_X = (J^f_X)^TJ^f_X$ is the Gram matrix of the encoding $f$ at $X$. While in the above definition, we consider the Euclidean metric for the output space of PIs, our approach can be applied for other choices of metric as well. We present the definition of pull-back norm for any differential encoding mapping between Riemannian manifolds in Appendix~\ref{app:pull-back metric}. We also provide a visualization for the pull-back norm in the right panel of Figure~\ref{fig:jac-and-pbnorm}. 

The pull-back norm of $V(X)$ measures the sensitivity of $f$ to the variation $V(X)$ at $X$. To measure the sensitivity across different inputs, we take the average with respect to a distribution on $\mathcal{M}$. In practice, we use the empirical distribution of a given data set $\mathcal{D}=\{X_i\}_{i=1,\ldots,|\mathcal{D}|}$. 

\begin{definition}[Average pull-back norm] 
The average pull-back norm of a vector field $V$ with respect to an encoding map $f$ and a data set $\mathcal{D}=\{X_i\}_{i=1,\ldots,|\mathcal{D}|}$ of cardinality $|\mathcal{D}|$ is 
$$
\|V\|_f = \frac{1}{|\mathcal{D} |}\sum_{X\in \mathcal{D}} \| V(X) \|_f.
$$
\label{def:avg-pbn}
\end{definition}

Please note that in Definition~\ref{def:avg-pbn}, $V(X)$ denotes a tangent vector at $X$ in the space of point clouds. 
Specifically, a vector $V(X)\in T_X\mathcal{M}$ corresponds to a ``vector field'' on $X$ which assigns a vector to each point $x\in X$ in the point cloud $X$. 
We say that an encoding can detect a data variation characterized by a vector field $V$ if the encoding is sensitive to $V$, which alludes to the average pull-back norm of $V$.

\paragraph{Singular value decomposition} 
\label{sec:eigen-dec}

To gain a more fine-grained insight into the properties of an encoding, we consider the singular value decomposition (SVD) of the Jacobian matrix, 
$$ 
J = \Tilde{Q} \Lambda Q^T. 
$$ 
Here $\Tilde{Q}\in\mathbb{R}^{n \times n}, Q\in\mathbb{R}^{m \times m}$ are orthogonal matrices, and $\Lambda\in\mathbb{R}^{n\times m}$ is a diagonal matrix containing in its diagonal the singular values in decreasing order $\lambda_1 \geq \lambda_2 \geq \cdots \geq \lambda_{\min(m,n)}$. This sequence of ordered singular value is the \emph{spectrum} of the Jacobian $J$. Accordingly, the Gram matrix has eigendecomposition $G=J^TJ=Q\Lambda^2Q^T$. We denote the right singular vectors, i.e., the columns of $Q$, by $q_1,\ldots, q_m$. We will refer to these $q_i$'s as the eigenvectors of the encoding. Any tangent vector $v\in T_X\mathcal{M}$ can be written as $v = \sum \innerprod{v}{q_i} q_i$, and its pull-back norm as $\|v\|_f = \sqrt{\sum \lambda_i^2 \innerprod{v}{q_i}^2}$. With this, the pull-back norm is decomposed as two parts: the spectrum of Jacobian and the alignment between $v$ and eigenvectors, which is described by the inner product. In particular, the pull-back norm is large if $v$ is aligned with $q_i$'s that have large singular values. This also implies that the eigenvectors with top largest eigenvalues can be regarded as the data variations that the encoding considers most ``important''. We offer a visualization in Appendix~\ref{app:visualize}. 

\paragraph{Comparison between encodings}
Later we will compare different encodings by examining their sensitivity to specific data variations. To place different encodings on the same scaling level, we consider the normalized average pull-back norm $[\sum \|V(X)\|_f/\lambda_1^f]/|\mathcal{D}|$. We provide details about this normalization technique in Appendix~\ref{app:details-experiments-jacobian-normalization}. Another way of comparing different encodings is via the Bures-Wasserstein distance \citep{bhatia2019bures} between their Gram matrices. The Bures-Wasserstein distance quantifies the alignment between the eigendecompositions of the Gram matrices. For two positive definite matrices $A$ and $B$, the Bures-Wasserstein distance is computed as: 
$$
d_{\text{BW}}(A,B)=\left[ \mathrm{Tr}A+ \mathrm{Tr}B-2\mathrm{Tr}(A^{1/2}BA^{1/2})^{1/2}\right]^{1/2}.
$$ 
For matrices that are not strictly positive definite we use the same definition after adding a small multiple of the identity matrix.

\section{Identifying What Is Recognized}
\label{sec:identify}

In this section, we seek to identify, for fixed PH encodings, which data variations are recognized and which are ignored. Specifically, we investigate the total amount of data variations that are captured by PH encodings (Section~\ref{sec:identify-spectrum}), and among those captured variations we interpret the most significant ones (Section~\ref{sec:identify-alignment}). Then, we quantify the ``importance'' for any data variation (Section~\ref{sec:identify-pbnorm}), and measure the dissimilarity of the captured information across different PH encodings  (Section~\ref{sec:identify-bwdistance}). It is important to notice that, even though our approach eliminates potential inference biases induced by downstream models, our results still significantly depend on the data set under consideration. 
Therefore, we emphasize that this section is dedicated to investigating ``what is recognized'' by PH encodings \emph{within specific data sets}.

\paragraph{Synthetic data} Throughout this section we consider a synthetic data set of point clouds in $\mathbb{R}^2$ sampled from curves in the Radial Frequency Pattern (RFP) family. 
A point cloud of this type is shown in Figure~\ref{fig:PI-ppl}. 
The curve $\RFP_{(a,w)}$ is parametrized by $\rho(\theta) = 1 + a\cos(w\theta), \theta\in(0,2\pi]$. Loosely speaking it represents the shape of a flower with $w$ petals of size characterized by $a$. 
We take $w$ in $\{3,4,\ldots, 10\}$ and $10$ values of $a$ evenly distributed on the interval $[0.5, 0.9]$.  For each curve $\RFP_{(a,w)}$, we evenly sample $N=150$ points to obtain a point cloud, which is then scaled to the unit square $[0,1]^2$ (see examples in Appendix~\ref{app:sec:identify}). 
Notably, each curve in the RFP family has the same topology. This allows us to validate the ability of PH to capture information beyond topology. The RFP data set has also been used in studying the importance of specific shape features in shape recognition and object perception \citep{schmidtmann2015shape}.

\paragraph{PH encodings} We investigate PH encodings constructed on $3$ different filtrations: Vietoris-Rips (Rips) filtration, DTM filtration, and Height filtration (with respect to the hyperplane with normal vector $[1,0]^T$). For each filtration we extract the $1$-dimensional PDs, and convert them to PIs with the same PI parameters. 
In the following discussion, we sometimes refer to these encodings by the name of the filtration on which they are constructed, denoting for instance the PH encoding constructed on Rips filtration simply as Rips. 
For reference we also include a PointNet encoding. PointNet \citep{qi2017pointnet} is a deep neural network architecture designed for processing point clouds directly. We train the network to predict the number of petals $w$, achieving a test accuracy of $100\%$. We take the output of the second-to-last layer of the trained PointNet as the output of the PointNet encoding. More details concerning the filtration and PI parameters, and the PointNet encoding are provided in Appendix~\ref{app:sec:identify}.

\subsection{Spectrum of the Jacobian}
\label{sec:identify-spectrum}
As explained in Section~\ref{sec:jac}, the rank of the Jacobian indicates the maximum amount of information, in a dimension sense, that can be captured by the encoding. The spectrum, i.e., the sequence of ordered singular values $\lambda_1 \geq \lambda_2 \geq \cdots \geq \lambda_m $, provides a fuller picture, indicating to what extent different eigenvectors are highlighted. 

\begin{figure}[ht]
  \centering
  \includegraphics[clip=true, trim =0cm 0cm 29.5cm 0cm, height=.3\textwidth]{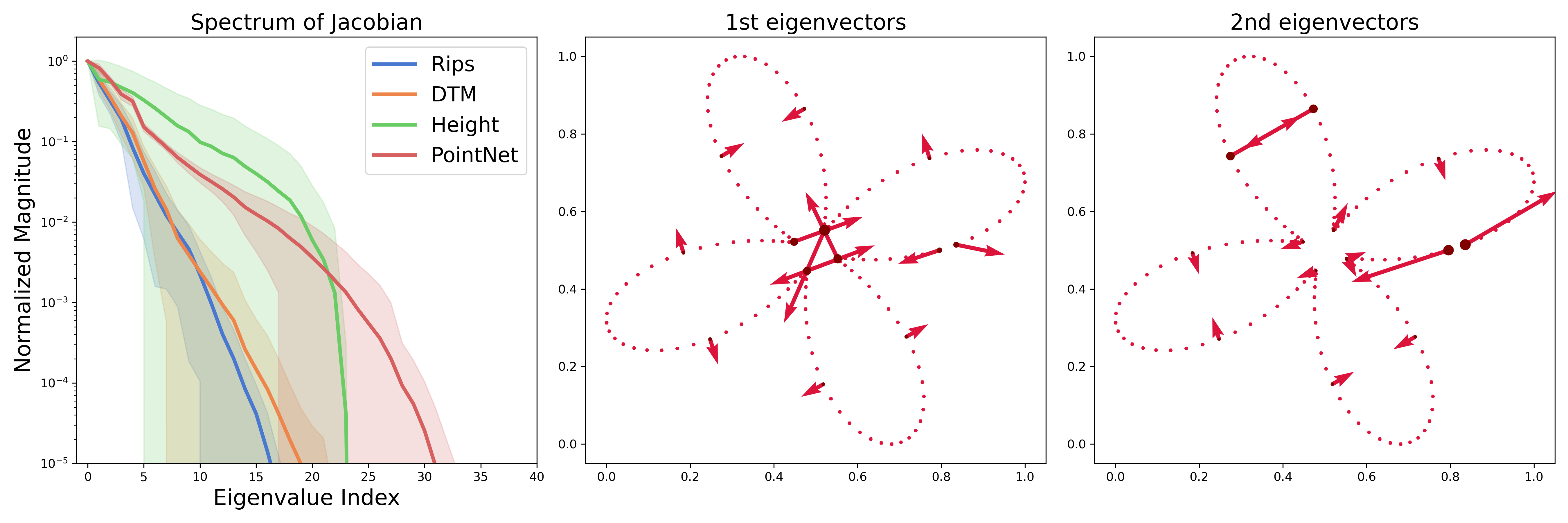}
  \qquad 
  \includegraphics[clip=true, trim =16cm 0cm 0cm 0cm, height=.3\textwidth]{Figures/spec_rfp.png}
\caption{Left: The normalized spectrum of the Jacobian for different encodings. Shown is the mean and standard error of the ordered normalized singular values over different input point clouds. Right: The top two eigenvectors of the Jacobian for the PH encoding constructed on the Rips filtration at a particular input point cloud.} 
\label{fig:spec_perturb}
\end{figure}

The left plot in Figure~\ref{fig:spec_perturb} reports the normalized spectrum of the Jacobian for different encodings, which is the sequence of ordered normalized singular values 
$$
1 \geq \frac{\lambda_2}{\lambda_1} \geq \cdots \geq \frac{\lambda_{\text{rank}(J)}}{\lambda_1} > 0. 
$$ 
The first observation is that the rank of the Jacobian is much smaller than the dimension of the point cloud space and the PI space. 
Indeed, note that the dimension of $\mathcal{M}$ is $m = D \times N = 2 \times 150 = 300$, and the dimension of $\mathcal{N}$ is $n = P\times P = 20 \times 20 = 400$. On the other hand, the normalized singular values decay to $10^{-5}$  before index $40$. 
We conclude that the four encodings under consideration capture only a small set of variations in the input data and discard many others. Secondly, while Rips, DTM, and Height all have a similar number of singular values larger than $10^{-5}$, Rips and DTM exhibit a sharper initial decay than Height. This implies that Height has a larger effective rank,\footnote{The effective rank is the number of singular values that have a similar order of magnitude as the top singular value.} while Rips and DTM concentrate their attention more specifically on a few variations. This difference in decay rates may stem from the fact that the size of holes, which is captured by Rips and DTM, is influenced by fewer variations compared to the position of holes, which is information retained by Height. 

The middle and right plots in Figure~\ref{fig:spec_perturb}, show the top two eigenvectors for Rips at an example point cloud $X$. An eigenvector corresponds to a list of vectors attached to all individual points in the point cloud. These eigenvectors provide insight into the nature of the ``important'' data variations. More technically, these variations correspond to the most effective way to change the birth/death parameters of certain homology classes.\footnote{This bears some resemblance to adversarial perturbations considered in neural networks.}  For instance, in the middle plot of Figure~\ref{fig:spec_perturb}, the vectors on the petals depict variations that narrow/broaden the petals, which in turn change the death parameter of the corresponding homology classes. The eigenvectors can also be used to obtain point saliency maps, which we discuss in  Appendix~\ref{app:point-saliency}. 
However, we observe that the eigenvectors do not necessarily have an obvious intuitive description. Hence, interpretations are needed to bridge the gap between abstract eigenvectors and human-understandable concepts. 

\subsection{Alignment between eigenvectors and perturbation tangent vectors}
\label{sec:identify-alignment}
To interpret the eigenvectors of the encoding, we consider their alignment with different perturbation vector fields. 

\paragraph{Perturbations on the data manifold} 
We consider eight types of perturbations applied to the input data, illustrated in Figure~\ref{fig:perturb}. 
The first two, \textit{rotation} and \textit{translation}, are Euclidean motions, i.e., transformations that preserve the Euclidean distances between the points in a point cloud. They are used to test the fact that pointwise distance-based encodings, namely Rips and DTM, should remain invariant under such variations. 
The \textit{dilation}, \textit{strecth\_x}, and \textit{shearing} variations serve to test the sensitivity of the encoding to invertible linear transformations of the point clouds. 
The next two variations are used to test the robustness of PH encoding against noise: the \textit{noising} variation adds coordinate-wise Gaussian noise at each point in the point cloud; the \textit{wiggly} variation adds a sine-type noise at every point in the point cloud along the normal direction. 
Lastly, the \textit{convex} variation transforms the point cloud towards the boundary of its convex hull through a linear interpolation. 
We present a visualization of the effects of \textit{shearing} and \textit{convex} on the PH associated with Rips filtration and Height filtration in Appendix~\ref{app:sec:identify}.

\begin{figure}[ht]
  \centering
\includegraphics[clip=true, trim= 0cm .5cm 0cm 0cm,width=1.\textwidth]{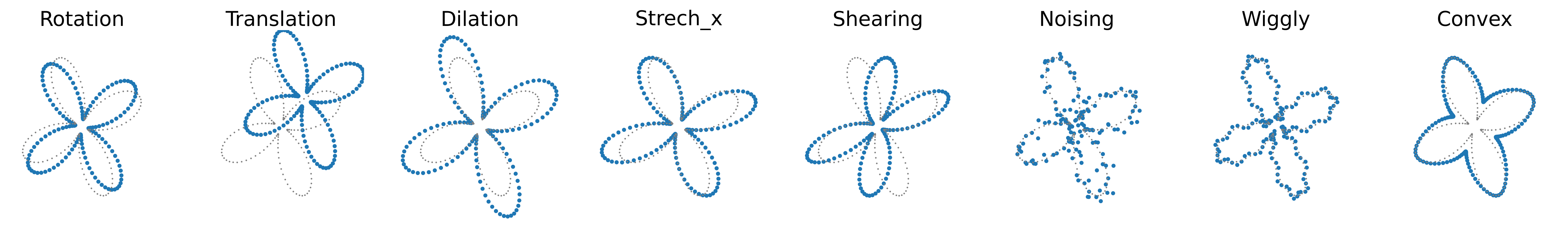} 
  \caption{Eight types of perturbations on a RFP pointcloud.}
  \label{fig:perturb}
\end{figure}

We examine the angles between the top eigenvectors of the different encodings and different perturbation tangent vectors. In Figure~\ref{fig:align}, we record the average inner product between the perturbation tangent vectors and the top four eigenvectors of each encoding method: 
$$
\frac{1}{\mid \mathcal{D_{\text{RFP}}} \mid}\sum_{X\in \mathcal{D_{\text{RFP}}}} \bigg| \left\langle \frac{V_\pi(X)}{\|V_\pi(X)\|}, q_i^f \right\rangle \bigg|, \quad i=1,2,3,4.
$$
The eigenvectors $q_i^f$ depend on $X$. Since each encoding map $f$ induces a different orthonormal basis $\{q_1^f,q_2^f,\ldots,q_m^f\}$ on the tangent space of the data manifold, a fixed tangent vector will have different coordinates with respect to the different encodings. 
Figure~\ref{fig:align} serves as a sort of dictionary, showing how each perturbation (rotation, translation, dilation, etc.) is expressed in the language of each encoding. 

\begin{figure}[ht]
\centering
\includegraphics[width=.65\textwidth]{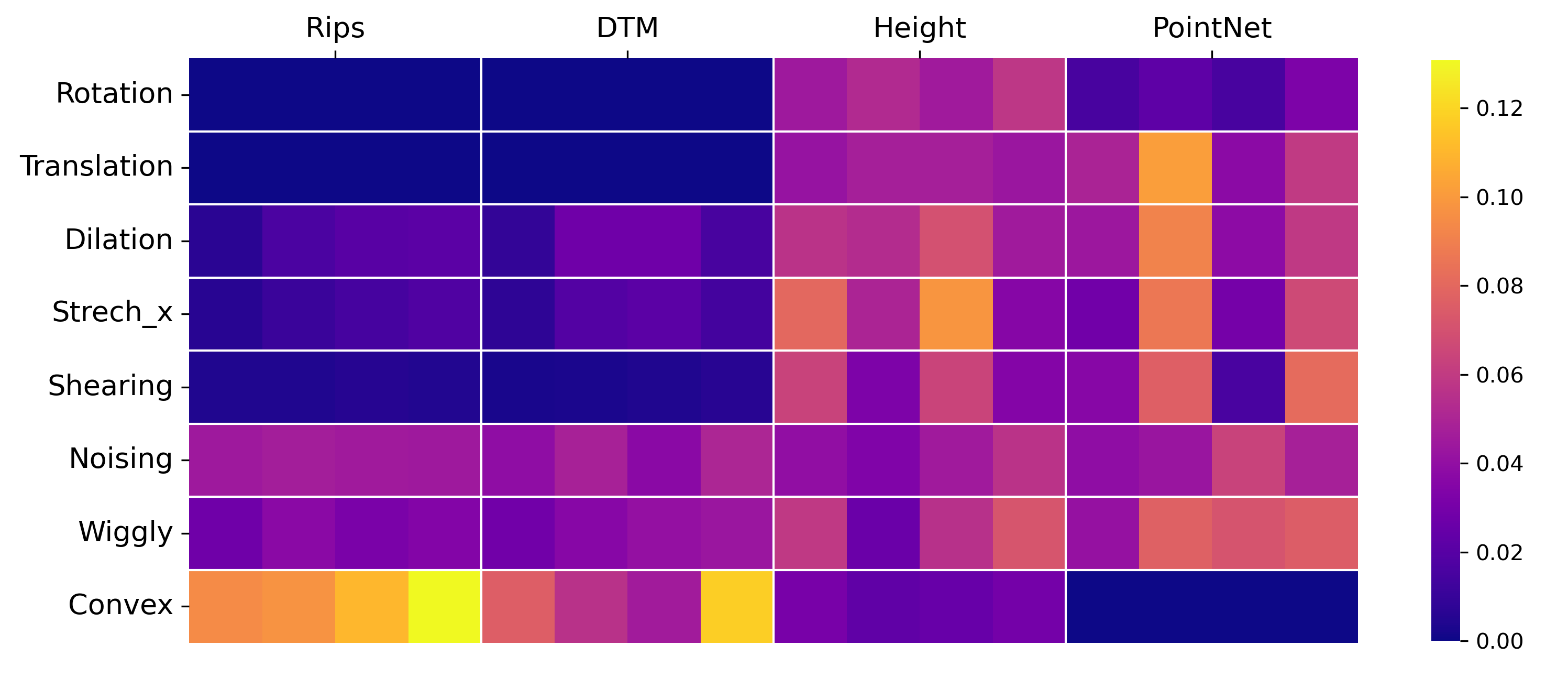}
\caption{Absolute inner product between perturbation vectors and top four eigenvectors of different encodings. A higher value implies greater alignment, i.e., greater sensitivity to a perturbation.}
\label{fig:align}
\end{figure}

For the PH encoding constructed on Rips and DTM filtration, we find that the top eigenvectors exhibit a relatively strong alignment with \textit{convex}. 
The corresponding average inner product is around $0.1$. 
This indicates that the most ``important'' data variation for Rips and DTM are closely related to convexity. This is consistent with the fact that these encodings capture geometric properties, such as birth values of holes in the filtration (that increase under the \textit{convex} perturbation, see Figure~\ref{fig:perturb}). At the same time, the top eigenvectors of Rips and DTM are orthogonal to \textit{rotation} and \textit{translation}. 
This indicates that Euclidean motion is not as relevant in the Rips and DTM, as is to be expected from the definitions of these encodings. 

For the PH encoding constructed on Height filtration, the top eigenvectors have a significant alignment with Euclidean motions and \textit{stretch\_x}. This makes sense, since Height is designed to collect information on the position of holes. On the other hand, we do not observe a strong alignment between top eigenvectors of Height and \textit{convex.} This might seem to be in contradiction with \cite{turkes2022on}, who demonstrated that PH on height filtration can be used for detecting convexity. Note, however, that they use $0$-dimensional PH on cubical complexes (or analogously, on Rips complexes on geodesic distances), which recognizes concave shapes by their multiple connected components ($0$-dimensional cycles) for at least some height filtration directions. $0$-dimensional PH with respect to such a filtration would see two connected components (two petals) for a while, that would then merge into one at some point. This would happen earlier under \textit{convex} perturbation (i.e., one of the connected components would die sooner), so that the alignment can be expected to be more significant in that case. 

For PointNet encoding, the top eigenvector has a relatively strong alignment with \textit{translation}. This is consistent with a previous observation by \cite{turkes2022on} where the PointNet did not perform well in classification tasks when the test data was corrupted by translations. Note akin to the approach adopted by \cite{turkes2022on}, we do not use data augmentation techniques during the training for PointNet. This might lead to the sensitivity of the trained PointNet to \textit{translation}. On the other side, PointNet is robust under \textit{convex} perturbations. This can be attributed to the nature of RPF data set. Recall the RPF data set comprises point clouds defined by two independent parameters, $a$ and $w$, which characterize the size and the number of the petals, respectively. While the PointNet is trained to identify the number of petals, it can easily learn from the data set to ignore the size of petals. Notice increasing the petal size bears strong resemblance to \textit{convex} perturbation (see examples of RPF point clouds in Appendix~\ref{app:sec:identify}). Therefore, one can loosely infer that the RPF data set is ``inherently'' augmented by \textit{convex} perturbation, and consequently the trained PointNet might learn from the data to ignore \textit{convex} information. 

\subsection{Pull-back norm of perturbation vector fields} 
\label{sec:identify-pbnorm} 
In some scenarios, one is interested in the sensitivity of an encoding to certain types of perturbations \citep{ren2020adversarial}. In Figure~\ref{fig:sense-compare} (left) we evaluate the average pull-back norm of different perturbation vector fields with respect to different encodings. The pull-back norm takes into account not only the alignment with the encoding eigenvectors but also the magnitude of the corresponding singular values.
We provide a numerical function in \url{https://github.com/shuangliang15/pullback-geometry-persistent-homology} 
that allows automatic computation of the average pull-back norm with respect to Vietoris-Rips filtration for any given perturbation on a given data set. Note that it is not necessary to have an explicit function description of the perturbation $\pi: \mathcal{M} \rightarrow \mathcal{M}$, since we only require the set of perturbed point clouds, i.e., $\pi(X)$ for all $X$ in the data set.

For DTM and Rips, we find that \textit{noising}, \textit{wiggly}, and \textit{convex}, have a significantly larger pull-back norm than the other data variations. This is consistent with the Jacobian spectrum in Figure~\ref{fig:spec_perturb} (which indicates DTM and Rips have faster-decaying spectrum and therefore capture only few data variations), and the alignment information in Figure~\ref{fig:align} (which indicates alignment of the top eigenvectors with these particular variations). The PH encoding constructed on Height filtration has a relatively large pull-back norm for many of the considered data variations, including dilation, stretch, and shearing. Also the pull-back norm associated with \textit{convex} perturbations with respect to Height exhibits a moderate average value and a large variance. This implies Height is sensitive to \textit{convex} perturbations in certain point clouds, while being less sensitive in others. This aligns with the alignment information in Figure~\ref{fig:align}, which indicates the most ``important'' data variations seen by Height, on average, are not closely related with \textit{convex}. Similar to Height, PointNet also leads to relatively large pull-back norms, but with a different profile and with exception of \textit{convex}, which has a small pull-back norm under PointNet. Rips and DTM have a faster-decaying Jacobian spectrum than Height and PointNet, indicating that they are sensitive to fewer data variations. 

\begin{figure}[ht]
\centering
\includegraphics[width=.9\textwidth]{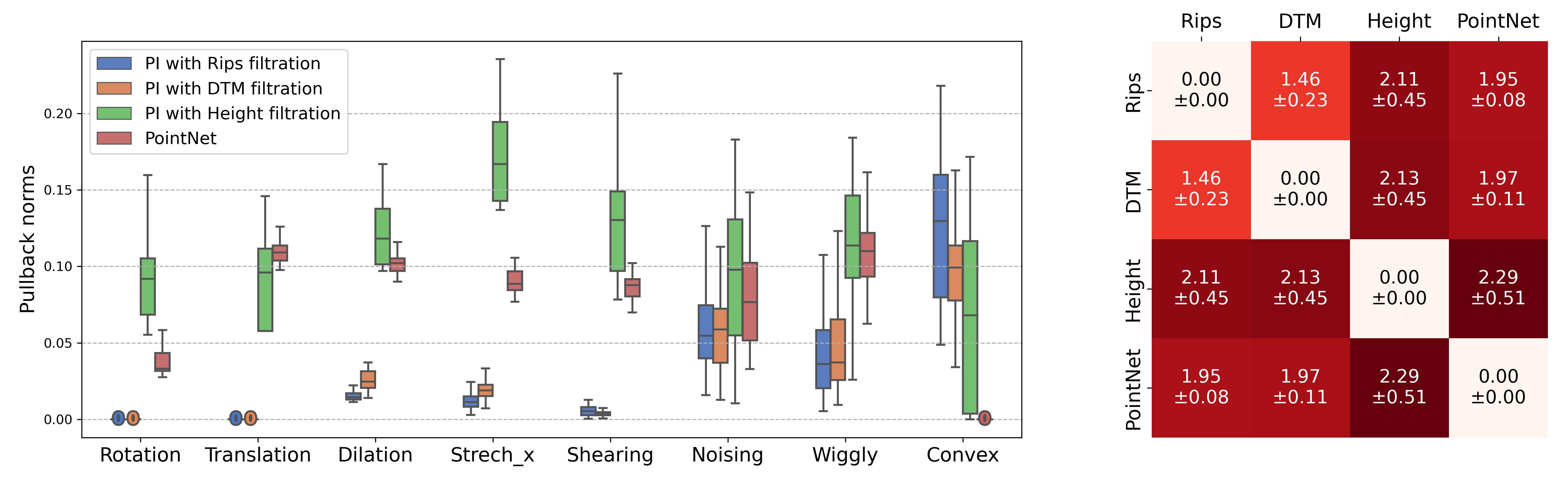}
\caption{Left: Average pull-back norm of different perturbation vector fields with respect different encodings. Right: Bures-Wasserstein distance between Gram matrices $J^TJ$ of different encodings.}
\label{fig:sense-compare}
\end{figure}

\subsection{Distance between Gram matrices} 
\label{sec:identify-bwdistance}
Along this thread, we can also investigate the relationship between encodings. The right panel of Figure~\ref{fig:sense-compare} shows the average Bures-Wasserstein distance between the Gram matrices of different encodings. 

The average distance matrix shown in the right part of Figure~\ref{fig:sense-compare} indicates that all encodings are different, whereby some are more similar and some are more dissimilar (see also alignment pattern in Figure~\ref{fig:align}). Rips and DTM are closest each other, while PointNet and Height both differ significantly from Rips and DTM. This indicates that Rips and DTM capture similar information which is different from the information that is captured by Height and PointNet. This makes sense, since the DTM filtration function is the average distance to neighbors, which approximates the distance function that underlies the Rips filtration; moreover, the data we consider does not contain outliers. We also find that although Height and PointNet give relatively similar pull-back norms for rotation, translation, and dilation, overall these two encodings are very different.

\section{Selecting Hyperparameters}
\label{sec:select}

In this section, we shift our focus to the problem: how do we select the hyperparameters of the encoding in order to detect a data feature of interest.
Recall that there are three major hyperparameters to choose when constructig PIs: 1) the resolution $P$, 2) the kernel function $g_{(b,l)}(x,y)$ and its associated parameters and 3) the weighting function $\alpha(b,l)$. 
We first focus on the first two PI parameters, namely the resolution and the variance for the Gaussian kernels. We examine their impact on the rank and spectrum of Jacobian (Section~\ref{sec:select-spec}) and on the pull-back norm of gradient vector fields of data features of interest (Section~\ref{sec:select-pbnorm}). We demonstrate there is a strong correlation between the pull-back norms of gradient vector fields and the downstream task performance, where the task objective is to predict that data feature (Section~\ref{sec:select-correlation}). 
We then investigate the impact of weighting functions on the pull-back geometry. We introduce the \emph{beta weighting function}, which allows highlighting persistence intervals with different length (persistence time). Then we examine the effects of the mean parameter of beta weighting function on the pull-back geometry (Section~\ref{sec:select-weight-pbn}). Finally, again we show a significant correlation between the pull-back norm of gradient vector fields and the downstream task performance (Section~\ref{sec:select-weight-corr}). 

\paragraph{Real-world data} 
In this section we utilize the brain artery tree data \citep{bendich2016persistent}. 
%
This data set comprises $96$ artery trees in $\mathbb{R}^3$ (see Figure~\ref{fig:data_and_ph}, left). These artery trees are obtained by applying a tube-tracking algorithm to Magnetic Resonance Angiography (MRA) images from $96$ human subjects. We randomly subsample three point clouds from the vertices of each artery tree, with each point cloud containing $N=500$ points. 
Then we normalize the sampled point clouds to the unit cube $[0,1]^3$. 

\paragraph{Feature} Each point cloud is labeled with a binary \textit{sex} feature, based on the corresponding human subjects' medical information. 

\subsection{Resolution and variance of Gaussian kernel}
\label{select-res-and-var}

\paragraph{PH encoding} We focus on the $1$-dimensional PIs on the Vietoris-Rips filtration. We investigate two hyperparameters involved in the construction of PIs: 1) the resolution $P$, and 2) the variance $\gamma^2$ of the Gaussian kernel (see Figure~\ref{fig:data_and_ph}, right). We set the baseline PI parameters as $P=20$, $\gamma^2=10^{-4}$, and consider a linear weighting function $\alpha(b,l) = \frac{l}{\max\{l\}}$ \citep{adams2017persistence}.

\begin{figure}[ht]
\centering
\includegraphics[clip=true, trim=0cm 1.5cm 28cm 2cm, width=.4\textwidth]{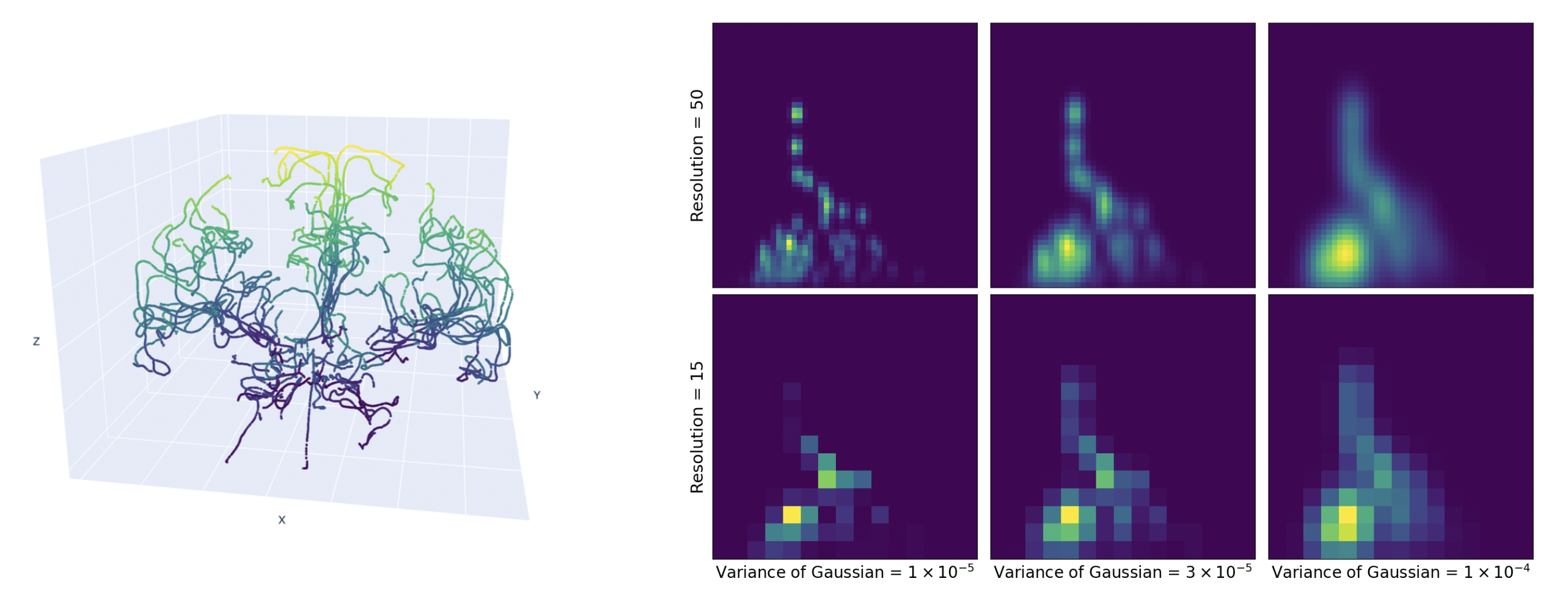}
\includegraphics[clip=true, trim=17cm 0cm 0cm 0cm, width=.5\textwidth]{Figures/brain_data_4.png}
\caption{Left: point cloud sampled from a brain artery tree, where the color represents the $z$-coordinate. Right: the corresponding PI representation with different parameter settings.} 
\label{fig:data_and_ph}
\end{figure}

\subsubsection{Spectrum of the Jacobian} 
\label{sec:select-spec}

We begin by analyzing the effects of the resolution $P$ and variance $\gamma^2$ of the Gaussian kernel on the spectrum of the Jacobian. In each plot of Figure~\ref{fig:jacobian_real}, we varied one parameter while keeping the other fixed at the baseline setting, and present the normalized spectrum. We again observe a low-rank phenomenon, since the average rank is always below $160$, 
while in the baseline setting the dimension of the point cloud and PI spaces are respectively $m = D\times N = 3 \times 500 = 1500$ and $n = P \times P = 20 \times 20 = 400$.

\begin{figure}[ht]
\centering
\includegraphics[width=.8\textwidth]{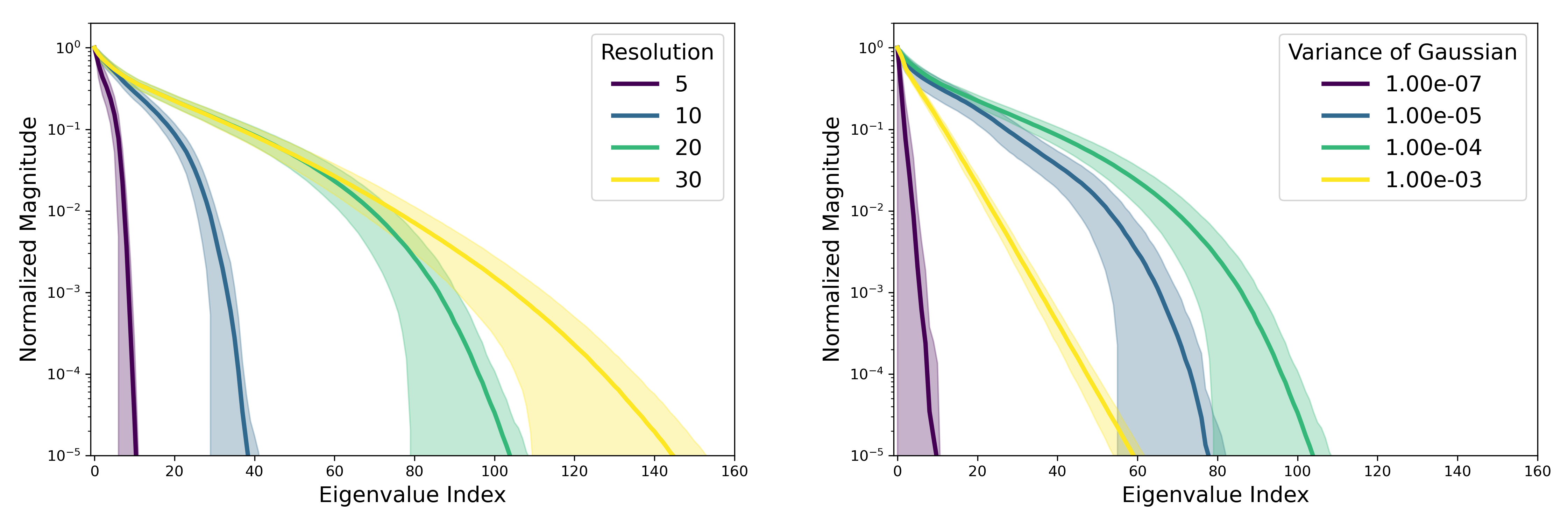} 
\caption{Spectrum of the Jacobian matrix depending on the PI parameters. Left: The effect of changing the resolution $P$ of PI. Right: The effect of changing the variance $\gamma^2$ of Gaussian kernel in PI.}
\label{fig:jacobian_real}
\end{figure}

In the left part of Figure~\ref{fig:jacobian_real}, we see that the spectrum decays slower as the resolution increases, indicating an increase in rank. This implies that, as one would expect, higher resolution allows the PI to capture more information. 

Interestingly, we observe that, as the variance $\gamma^2$ increases, the rank of the Jacobian initially increases and then decreases. For fixed resolution, very small variance results in sparse PIs (see the first column in the right panel of Figure~\ref{fig:data_and_ph}), where multiple PD points $(b,l)$ may fall into one pixel and can only highlight that pixel; 
very large variance leads to blurred PIs (see the third column in the right panel of Figure~\ref{fig:data_and_ph}), where it also becomes difficult to distinguish between PD points.

\subsubsection{Pull-back norm of feature gradient vector fields} 
\label{sec:select-pbnorm}
We now explore the effects of the resolution and variance on the pull-back norm of gradient vector fields of the following data feature.

Our goal is to locate the optimal PI parameters of the PH encoding to effectively detect the \textit{sex} feature in the brain artery point clouds. 
We consider the domain $(P,\gamma^2)\in 
[15,30]\times[10^{-5},10^{-4}]$. Here the ranges for $P$ and $\gamma^2$ are selected based on the values where the spectra in Figure \ref{fig:jacobian_real} exhibit the slowest decay. 

\begin{figure}[t]
\centering
\includegraphics[width=.9\textwidth]{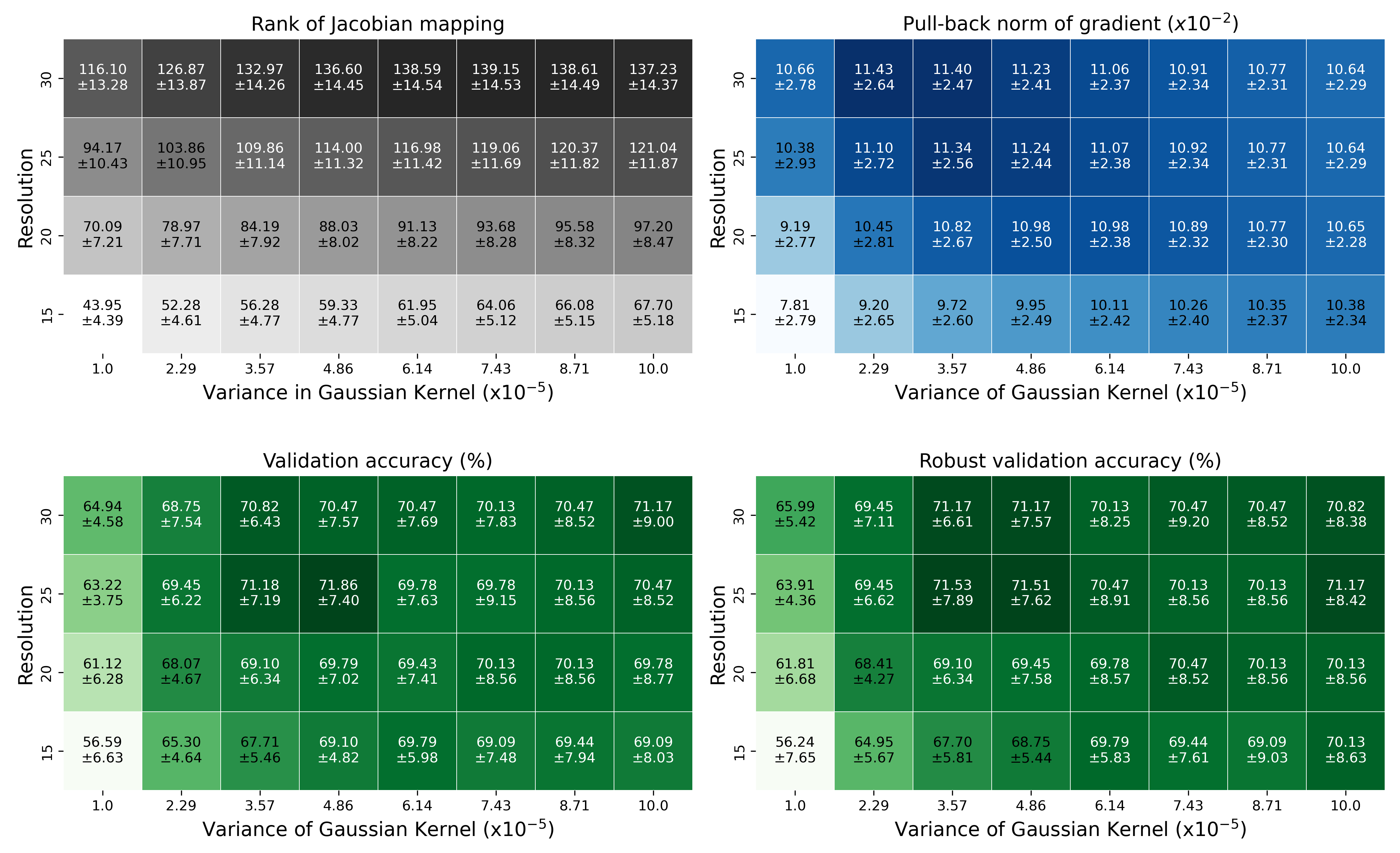} 
\caption{For the brain artery data set, shown is the effect of the resolution (vertical axis) and variance of the Gaussian kernel (horizontal axis) of the PI on the average rank of the Jacobian (upper left), 
average pull-back norm of the gradient vector field of the \textit{sex} feature (upper right), as well as the test accuracy (lower left) and robust test accuracy  (lower right) of the logistic regression model predicting \textit{sex} based on the PI. }
\label{fig:pull-back_norm_acc}
\end{figure}

In the upper right plot of Figure~\ref{fig:pull-back_norm_acc}, we present the average pull-back norm of the gradient field of the \textit{sex} feature under different PI parameter choices. 
The gradient fields are estimated via numerical methods detailed in Appendix~\ref{app:sec:select}. 
We observe that the pull-back norm generally increases as the resolution $P$ increases, whereas the pull-back norm is not monotonic with $\gamma^2$. Moreover, the optimal value for $\gamma^2$ varies depending on the choice of resolution. 
Also, comparison with the upper left part of Figure~\ref{fig:pull-back_norm_acc} reveals that the maximum pull-back norm is not necessarily attained for parameters where the rank of the Jacobian is maximal, i.e., when PIs capture the most information about the point cloud.

\subsubsection{Correlation with downstream task performance} 
\label{sec:select-correlation}

We investigate the hypothesis that a high pull-back norm correlates with the performance of a predictor trained on the encoding. To this end we feed PIs generated with different choices of the parameters into logistic regression models and train these to predict the \textit{sex} feature. Here we use logistic regression as the downstream model because of its simplicity, with the intention to minimize the impact of model complexity and training techniques on the task performance. We provide results for convolutional neural networks (CNN) in Appendix~\ref{app:sec:select}. 

\begin{figure}[t]
\centering
\includegraphics[width=.95\textwidth]{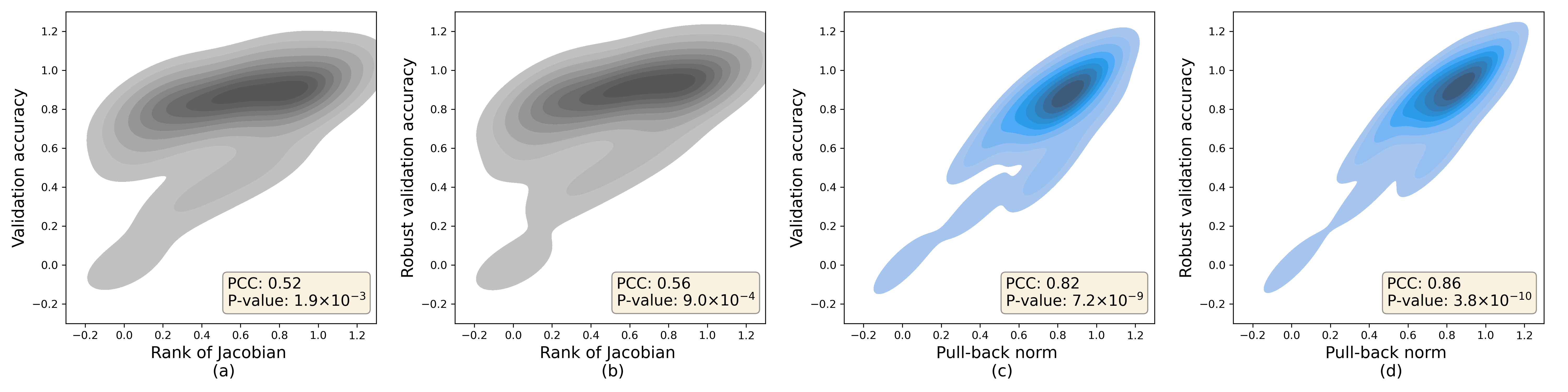} 
\caption{Gaussian kernel density estimation of the joint distribution of four pairs of variables: (a) Jacobian rank vs.\ validation accuracy; (b) Jacobian rank vs.\ robust validation accuracy; (c) pull-back norms vs.\ validation accuracy; and (d) pull-back norms vs.\ robust validation accuracy, where the downstream models are chosen as logistic regression models.
}
\label{fig:correlation}
\end{figure}

We evaluate the performance in terms of validation accuracy and robust validation accuracy\footnote{Robust validation accuracy evaluates the accuracy on a test data set subject to additive Gaussian noise on the inputs.} using cross-validation, which are presented in the lower left and lower right plots in Figure~\ref{fig:pull-back_norm_acc}. The validation accuracy and robust validation accuracy exhibit a similar pattern to the pull-back norm. 
Notably, all three quantities reach their maximum at around $P=30$ and $\gamma^2=3.57\times 10^{-5}$, and their minimum at the lower-left corner. 

For a more quantitative comparison, Figure~\ref{fig:correlation} shows a kernel density estimate of the joint distribution of four pairs of variables: Jacobian rank vs.\ validation accuracy, Jacobian rank vs.\ robust validation accuracy, pull-back norm vs.\ validation accuracy, and pull-back norm vs.\ robust validation accuracy. 
The plots clearly indicate a strong correlation between the pull-back norm and the performance on the downstream task. The Pearson's correlation coefficient (PCC) between the four pairs of variables, along with the p-value for a two-sided test, are presented in the lower right corner of each plot in Figure~\ref{fig:correlation}. 

We conclude that for the considered task, there is a significant correlation between the pull-back norm and the downstream task performance. It is also interesting to interpret these results together with the rank of the Jacobian (upper left in Figure~\ref{fig:pull-back_norm_acc}). The results demonstrate that the improvement in downstream performance is only somewhat correlated with including more information, but it is strongly correlated with including the most relevant information, which is precisely quantified by the pull-back norm. Therefore, we suggest that the proposed framework can be used to select appropriate PH encodings in practice. 
Note that the procedure is independent of the downstream model architectures and training techniques.

\subsection{Weighting function}
\label{sec: select-weight}

\paragraph{PH encoding} We maintain our focus on the $1$-dimensional PIs with respect to the Vietoris-Rips filtration. We set the baseline PI parameters as $P=20$ and $\gamma^2 = 3\times 10^{-5}$. For the weighting function, we consider the \emph{beta weighting function} induced by the probability density function of a beta distribution:
$$
\alpha(b,l) = \frac{\Gamma(\alpha+\beta)}{\Gamma(\alpha) \Gamma(\beta)} (\kappa l)^{\alpha-1}(1-\kappa l)^{\beta-1}
$$
where $\Gamma(\cdot)$ is the Gamma function and $\kappa$ is a scaling factor. We consider the mean-variance parameterization for the beta weighting function: $\alpha = k\left(\frac{k(1-k)}{s^2}-1\right)$ and $\beta = (1-k)\left(\frac{k(1-k)}{s^2}-1\right)$. Here $k$ is the \emph{mean parameter}, which controls the concentration of the weighting function, and $s^2$ is the variance parameter, which controls the ``degree'' of concentration. We set $s^2$ as $0.065$ and $\kappa$ as $1$. 

We consider beta weighting function for several reasons: 1) beta weighting function is compactly supported, which is more suitable for PI; 2) it assigns zero weight to the horizontal axis, which aligns with the stability criteria proposed by \cite{adams2017persistence}; 3) by tuning the mean parameter, one can highlight persistence intervals with different length (see Figure \ref{fig-weight-func} for an illustration). This allows to investigate questions such as ``are short persistence intervals more crucial to this application than long persistence intervals?'' 

\begin{figure}[h]
    \centering
    \includegraphics[width=.93\textwidth]{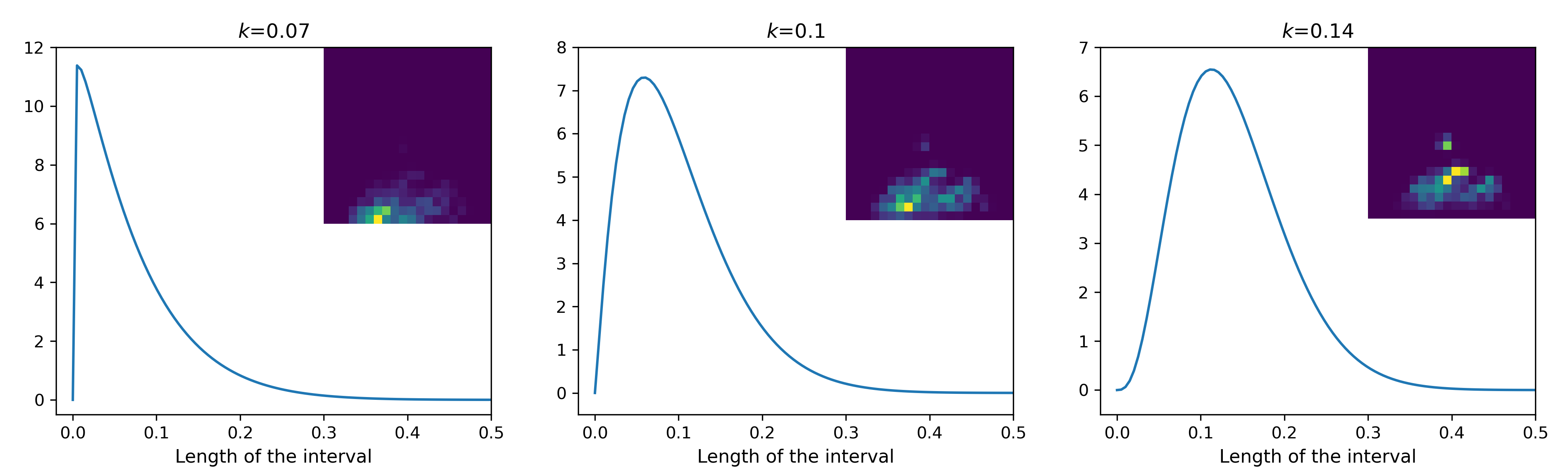}
    \caption{As a weighting function for PIs for the brain artery data, we use the beta weighting function with different values of mean parameter $k$. Larger $k$ assigns more importance to longer persistence intervals.
    The top right corner of each panel shows the $1$-dimensional PI derived from the Rips filtration on one point cloud, illustrating the impact of the weighting function depicted in the main plot. 
    }
    \label{fig-weight-func}
\end{figure}

\subsubsection{Pull-back norm of feature gradient vector fields}
\label{sec:select-weight-pbn}
We investigate the effects of the mean parameter $k$ on the rank of Jacobian and pull-back norm of the gradient vector field of the \emph{sex} feature (see the left and middle panel in Figure~\ref{fig-pbn-performance}). 
In the left panel of Figure~\ref{fig-pbn-performance}, we observe that as the weighting function assigns more importance to longer persistence intervals, the rank of the Jacobian monotonically decreases. This aligns with the fact that the number of longer persistence intervals is generally smaller than the number of shorter persistence intervals. 
However, the pull-back norm peaks when $k$ is set to 0.1 and then decreases as $k$ increases further.

\subsubsection{Correlation with downstream task performance} 
\label{sec:select-weight-corr}
We again examine the correlation between the pull-back norm and the performance of the logistic regression models trained on PIs. We present the validation accuracy in the right panel of Figure~\ref{fig-pbn-performance}. 
Notably, we observe that the validation accuracy demonstrates a similar pattern to the pull-back norm. 
Quantitatively, the Pearson’s correlation coefficient (PCC) between pull-back norm and validation accuracy is $0.839$, with a two-sided test p-value of $0.009$. In contrast, the PCC between rank and validation accuracy is $0.338$, with a p-value as $0.413$, which indicates once again that including more information in the data representation does not necessarily improve the downstream performance. 
These findings reinforce our conclusion that the pull-back norm is highly predictive for the downstream task performance in this task. 

\begin{figure}[h]
    \centering
    \includegraphics[width=.95\textwidth]{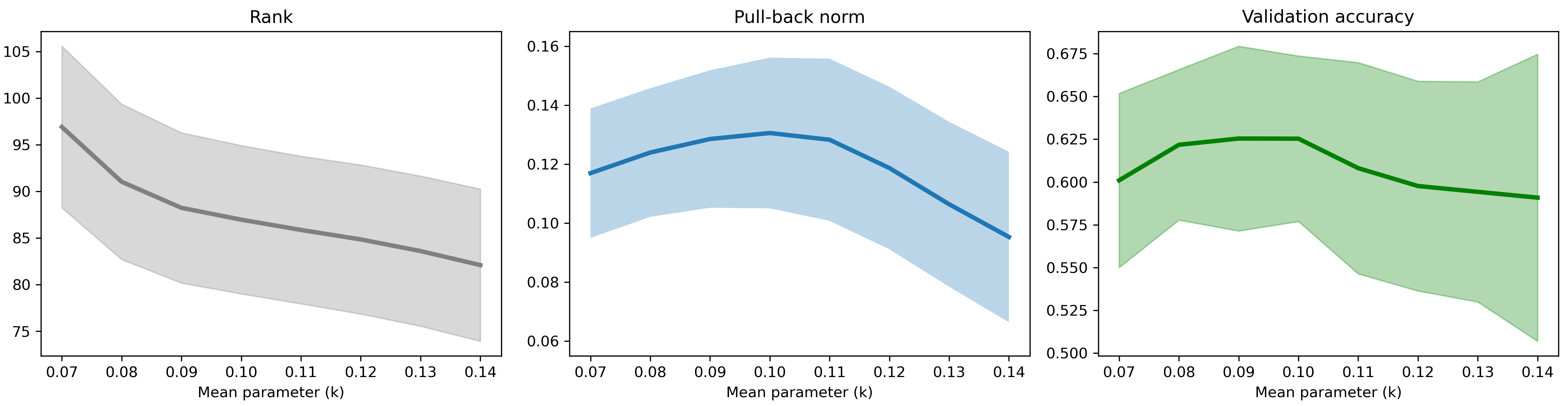}
    \caption{The impact of the mean parameter for the beta weighting function on the rank of Jacobian (left), pull-back norm of gradient field of feature \emph{sex} (middle), and the $7$-folded validation classification accuracy (right). The pull-back norms and validation accuracy are strongly correlated, and they both indicate persistence intervals with medium length are vital to classify the \emph{sex} feature. }
    \label{fig-pbn-performance}
\end{figure}

Interestingly, we observe that both the pull-back norm and the validation accuracy reach their maximal at an intermediate value for the mean parameter. This implies that persistence intervals of medium length are most crucial for classifying the \emph{sex} feature, which is consistent with the observation in the original paper \citep{bendich2016persistent}.  We note that this is an example of application for which medium length intervals in the barcode contain the most information for the problem at hand. 
In Appendix~\ref{app:humanbody}, we complement this discussion by considering a real-world data set of point clouds sampled from human body meshes, on which we compare long and short persistence intervals from a different perspective: which \emph{part} of the point clouds is the focus of long intervals and which is the focus of short ones.

\section{Conclusions} 

The methods and observations presented in this work contribute to addressing some of the main bottlenecks in the practical application of PH, namely how to identify which data variations are captured by PH encodings, how to quantify the effectiveness of these encodings in detecting particular data features, and how to select the parameters of PH encodings in order to obtain data representations that are suitable for solving a particular task. 

We presented ways to analyze the most relevant features on the data manifold that are captured by persistence images with different choices of the filtration and compared the results with neural-network-based encodings. For example, in the RFP data set we found that while a pretrained PointNet had a relatively high alignment with translation and dilation, the 1-dimensional persistence image encoding with Height filtration had a high alignment with stretch, and the 1-dimensional persistence image encoding with Rips filtration had a high alignment with a data variation that makes the point clouds more convex. At the same time we observe that the response of the encodings to these perturbations is less than 10\% as strong as for other more abstract data variations captured by the singular vectors of the Jacobian; for instance, the maximal value taken by inner products between unit tangent vectors representing perturbations and unit singular vectors is less than~$0.1$. 

We demonstrated on the real-world brain artery tree data set that feature alignment as measured by the Jacobian permits PH parameter tuning without the need to train a classifier on top of the data representation in order to select the parameters based on the test accuracy. Rather, one can select the parameters based on the pull-back norm of the features of interest, and perform training using the data representation with the highest pull-back norm. 
Meanwhile, we found that the persistence intervals of medium length are crucial for classifying the \emph{sex} feature on this data set. This goes against the popular belief that long intervals are the most important, at the same time confirming the findings from the original paper that employs PH on this data set.

\paragraph{Limitations and future work}

Our analysis is based on the structure of the Jacobian of the data encoding, which by nature focuses only on local variations of the input data. In future it will be interesting to further advance these methods in regard to non-local data variations, where synthetic notions of derivatives such as our empirical evaluation of the vector fields, and ideas such as the application of the iterated closest point method could serve as a point of departure. 
Further, the analysis of non-linear transformations via Gram matrices has seen a number of recent advances in the context of artificial neural networks, and it will be interesting to explore possible synergies between those investigations and PH data encodings. 

A limitation of the proposed methodology is the assumption about the differentiability of the encoding, and the need for a Riemannian manifold structure for the representation space. 
For this reason, our methodology cannot be directly applied to analyze certain common PH representations, such as persistence diagrams, since these cannot be endowed with a smooth structure \citep{LOT22}. 
We note, however, that a Riemannian framework for \emph{approximated} PDs has been introduced by \cite{anirudh2016riemannian}. 

The pull-back geometry approach also faces some computational challenges, since calculating the average pull-back norm for either perturbation vector fields or gradient vector fields requires the computation of the Jacobian matrix $J_X$, which is of size $(P \times P, D \times N)$, for each data point $X$, where $P$, $D$, $N$ are respectively the resolution for PI, the dimension of points in the point cloud and the number of points in the point cloud. 
On the positive side, this enables insights that are more intrinsic to the problem rather than being dependent on the choice of a classifier. 
Performance-based methods can also involve computational challenges, due to the need to choose the downstream models and tune their hyperparameters. 
We regard the proposed methods not as a substitute but as complementary to performance-based methods.

\subsubsection*{Acknowledgements} 
We thank anonymous TMLR referees for helpful feedback. 
This project has been supported in part by NSF CAREER 2145630, NSF 2212520, DFG SPP 2298 grant 464109215, ERC Starting Grant 757983, and BMBF in DAAD project 57616814.

\FloatBarrier 

\bibliography{tda}
\bibliographystyle{tmlr}

\clearpage
\appendix
\newpage 
\section*{Appendix} 

The appendix is organized into the following sections.
\begin{itemize}
    \item Appendix~\ref{app:notation}: Notation
    \item Appendix~\ref{app:mnfd-point-cloud}: Riemannian manifold structure of the space of point clouds
    \item Appendix~\ref{app:pi-differentiability}: Differentiability of the mapping from point clouds to PIs
    \item Appendix~\ref{app:visualize}: Visualizing the Jacobian of the encoding over the data manifold
    \item Appendix~\ref{app:point-saliency}: Point saliency maps for PH encodings
    \item Appendix~\ref{app:details-experiments}: Details on the experiments
    \item Appendix~\ref{app:humanbody}: Investigating which part of the data is highlighted by PH encodings
\end{itemize}

\section{Notation}
\label{app:notation}

Table~\ref{sample-table} provides a summary of the notation. 

\begin{table}[ht]
\caption{Notations and definitions}
\label{sample-table}
\centering
\begin{tabular}{ll}
    \toprule
    Notation     &  Definition      \\
    \midrule
    $\mathcal{M}$ & The manifold of point clouds \\
    $m$ & Dimension of manifold $\mathcal{M}$\\
    $D$ & Dimension of the space where point clouds are located\\
    $N$ & Number of points in point clouds \\
    $X$ & Point cloud \\
    $x$ & Point in a point cloud \\
    $d_W$&Wasserstein distance between point clouds\\
    $[N]$ & The set $\{1,2,\ldots,N\}$\\
    $\Omega(X,Y)$&The set of bijections between point clouds $X$ and $Y$\\
    $\omega:X\rightarrow Y$& Bijection between point clouds $X$ and $Y$\\
    $d_E$&Euclidean distance\\
    \midrule
    $K, K_r$ & Simplicial complex \\
    $\sigma$ & Simplex \\
    $\phi:K\rightarrow \mathbb{R}$ & Filtration function \\    
    $b$& Birth parameter of homology classes\\
    $d$& Death parameter of homology classes\\
    $l$& Lifespan parameter of homology classes\\
    \midrule
    $T_X\mathcal{M}$& Tangent space at $X$ on $\mathcal{M}$\\
    $v$& Tangent vector\\
    $V: \mathcal{M}\rightarrow \sqcup_X T_X\mathcal{M}$& Vector field\\
    $V(X)$&Tangent vector at $X$ assigned by $V$\\
    $\pi: \mathcal{M}\rightarrow\mathcal{M}$ & Perturbation mapping on the space of point clouds\\
    $V_\pi$ & Vector field induced by perturbation $\pi$\\
    $\rho: \mathcal{M}\rightarrow\mathbb{R}$ & One-dimensional feature function on the space of point clouds\\
    $\nabla \rho$ & Gradient vector field of function $\rho$\\
    
    \midrule
    $\mathcal{N}$ & The manifold of persistence images \\
    $n$ & Dimension of manifold $\mathcal{N}$\\
    $\eta$ & Transformation on PD points from birth-death to birth-lifespan coordinate\\ 
    $g_{b,l} $ &Gaussian kernel located at $(b,l)$\\
    $P$ & Resolution of persistence images\\
    $\gamma^2$ & Variance of Gaussian kernel in persistence images\\
    $\alpha(b,l)$ & Weighting function\\
    $\psi$ & Persistence surface\\
    $k$ & Mean parameter for the beta weighting function\\
    
    \midrule
    $f:\mathcal{M}\rightarrow\mathcal{N}$ & Encoding map from $\mathcal{M}$ to $\mathcal{N}$\\
    $J_X^f,J^f, J_X, J$& Jacobian mapping (Jacobian matrix) of map $f$ at $X$\\
    $G_X^f,G^f,G_X,G$& Gram matrix of map $f$ at $X$\\
    $\lambda_i^f,\lambda_i$ & The $i$-th largest singular value of the Jacobian mapping\\
    $q_i^f,q_i$& The $i$-th eigenvector of the PH encoding mapping\\
    $\|\cdot\|_f$&Pull-back norm induced by mapping $f$\\
    $\mathcal{D}$& Finite data set of point clouds\\
    $d_{BW}$& Bures-Wasserstein distance between positive-definite matrices\\
    \bottomrule
\end{tabular}
\end{table}

\section{Riemannian manifold structure of the space of point clouds}
\label{app:mnfd-point-cloud}

Let $\mathcal{M}$ denote the collection of all point clouds in $\mathbb{R}^D$ that contain exactly $N$ points, 
$$
\mathcal{M} = \{ X \subset \mathbb{R}^D : |X|=N \} . 
$$ 
Recall that the 2-Wasserstein distance $d_W$ between two point clouds of the same size in $\mathbb{R}^D$ is defined as 
$$
d_W(X,Y) = \min_{ \omega \in \Omega(X,Y)} \left(\sum_{x\in X} d^2_E(x, \omega(x)) \right)^{\frac{1}{2}} , 
$$ 
where $\omega$ is an bijection between $X$ and $Y$, $\Omega(X,Y)$ contains all bijections, and $d_E$ denotes the Euclidean metric on $\mathbb{R}^D$. The 2-Wasserstein distance defined above induces a metric topology on $\mathcal{M}$.

Compared to other distances in the space of point clouds, for instance the Gromov-Hausdorff distance which is commonly used in the study of stability theory of persistent homology \citep[see, e.g.,][]{blumberg2022stability}, the 2-Wasserstein distance endows the space of point clouds with a favorable manifold structure. This manifold structure ensures that every small neighborhood is isometric to an Euclidean open set. 

We discuss the topological manifold structure (Appendix~\ref{app:mnfd-structure}) and Riemannian manifold structure (Appendix~\ref{app:riemannian}) on the space of point clouds. Then we introduce the Riemannian style definition for perturbation vector fields and gradient vector fields (Appendix~\ref{app:vector-fields}).

\subsection{Manifold structure}
\label{app:mnfd-structure} 

We proceed to establish a manifold structure on $\mathcal{M}$.

\begin{proposition}
\label{prop}
Let $\mathcal{M}$ be the set containing all point clouds in $\mathbb{R}^D$ with $N$ distinct points, and $d_W$ be the $2$-Wasserstein distance on $\mathcal{M}$. For any point cloud $X\in\mathcal{M}$, there exists a Wasserstein ball $B_W(X,\varepsilon_X)$ and an injective mapping $\xi_X: B_W(X,\varepsilon_X)\rightarrow \mathbb{R}^{D\times N}$ such that
    
$$d_W(Y,Z) = d_E(\xi_X(Y),\xi_X(Z)), \quad \forall Y,Z\in B_W(X,\varepsilon_X).$$ 

\end{proposition}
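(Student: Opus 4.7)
The plan is to construct $\xi_X$ by using a canonical labeling of points in nearby point clouds. The underlying intuition is that if $X=\{x_1,\ldots,x_N\}$ has all points distinct, then any point cloud $Y$ sufficiently close to $X$ in Wasserstein distance must have each of its $N$ points sitting in a small ball around a unique $x_i$. This gives a canonical bijection $\omega_Y\colon X\to Y$, and the map $\xi_X(Y)=(\omega_Y(x_1),\ldots,\omega_Y(x_N))\in\mathbb{R}^{D\times N}$ is the candidate coordinate chart. The work is to choose $\varepsilon_X$ small enough so that (i) $\omega_Y$ is well-defined and unique on $B_W(X,\varepsilon_X)$, and (ii) for any two $Y,Z$ in this ball, the induced ``identity'' permutation (matching $\omega_Y(x_i)$ with $\omega_Z(x_i)$) is the optimal Wasserstein coupling between $Y$ and $Z$.

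First I would fix the scale. Let $\delta=\tfrac{1}{2}\min_{i\ne j}d_E(x_i,x_j)$, which is strictly positive by the distinctness assumption, and choose $\varepsilon_X>0$ with $\varepsilon_X<\delta/(1+\sqrt{N/2})$ (the precise constant is determined below). Since $d_W(X,Y)<\varepsilon_X$ implies $d_E(x_i,\omega_Y(x_i))<\varepsilon_X$ for each $i$ under any optimal coupling, and $\varepsilon_X<\delta$, the points $\omega_Y(x_i)$ lie in pairwise disjoint balls $B_E(x_i,\varepsilon_X)$. This forces $\omega_Y$ to be unique, so the labeling $y_i:=\omega_Y(x_i)$ is unambiguous. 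The map $\xi_X\colon B_W(X,\varepsilon_X)\to \mathbb{R}^{D\times N}$, $Y\mapsto(y_1,\ldots,y_N)$, is then well-defined.

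The heart of the argument is step (ii): showing that for $Y,Z\in B_W(X,\varepsilon_X)$ with labelings $(y_i),(z_i)$, the identity permutation minimizes $\sum_i d_E^2(y_{\sigma(i)},z_i)$ over all $\sigma\in\mathrm{Sym}(N)$. The cost of the identity matching is at most $N(2\varepsilon_X)^2=4N\varepsilon_X^2$ since each $d_E(y_i,z_i)\le d_E(y_i,x_i)+d_E(x_i,z_i)<2\varepsilon_X$. For any $\sigma\ne\mathrm{id}$, the set $\{i:\sigma(i)\ne i\}$ has cardinality at least two, and for each such $i$ we have
\[
d_E(y_{\sigma(i)},z_i)\ge d_E(x_{\sigma(i)},x_i)-d_E(x_{\sigma(i)},y_{\sigma(i)})-d_E(x_i,z_i)\ge 2\delta-2\varepsilon_X.
\]
Thus the cost of $\sigma$ is at least $2(2\delta-2\varepsilon_X)^2=8(\delta-\varepsilon_X)^2$. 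The condition $4N\varepsilon_X^2<8(\delta-\varepsilon_X)^2$, equivalent to $\varepsilon_X<\delta/(1+\sqrt{N/2})$, yields strict optimality of the identity permutation. This is the main obstacle, and the counting argument above (at least two displaced indices for any non-identity permutation) is the key quantitative estimate.

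Finally, combining the above gives
\[
d_W(Y,Z)^2=\sum_{i=1}^N d_E^2(y_i,z_i)=\|\xi_X(Y)-\xi_X(Z)\|^2=d_E(\xi_X(Y),\xi_X(Z))^2,
\]
which establishes the claimed isometry on $B_W(X,\varepsilon_X)$. Injectivity of $\xi_X$ is then immediate: if $\xi_X(Y)=\xi_X(Z)$, then $d_W(Y,Z)=0$, hence $Y=Z$ as elements of $\mathcal{M}$. The proposition thus follows, and as a byproduct the collection $\{(B_W(X,\varepsilon_X),\xi_X)\}_{X\in\mathcal{M}}$ furnishes a topological manifold atlas on $\mathcal{M}$ of dimension $D\times N$, as claimed in the surrounding text.
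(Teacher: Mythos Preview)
Your proof is correct and follows essentially the same strategy as the paper: label nearby point clouds via the optimal transport to $X$, and show that for $Y,Z$ in a small enough ball the induced ``identity'' matching is the optimal coupling between $Y$ and $Z$. The only notable difference is in the quantitative step. The paper argues pointwise: with $\varepsilon_X=\delta'/8$ (where $\delta'=\min_{i\ne j}d_E(x_i,x_j)$), each $y_i$ is strictly closer to $z_i$ than to any other $z_j$, so the nearest-neighbor bijection is optimal term by term; this yields an $N$-independent radius. You instead compare aggregate costs (upper bound $4N\varepsilon_X^2$ for the identity versus lower bound $8(\delta-\varepsilon_X)^2$ for any non-identity permutation), which forces the radius to shrink like $1/\sqrt{N}$. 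Both arguments are valid; the paper's pointwise version gives a larger ball and avoids the somewhat wasteful bound $\sum_i d_E^2(y_i,z_i)\le 4N\varepsilon_X^2$ (one could in fact sharpen your estimate to $4\varepsilon_X^2$ via the $\ell^2$ triangle inequality and remove the $N$-dependence entirely).
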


\begin{proof}
    Consider a point cloud $X=\{x_i\}^{N}_{i=1}$ in $\mathcal{M}$.  For arbitrary $\varepsilon>0$, we construct an injective mapping $\xi_X$ from $B_W(X, \varepsilon)$ to $\mathbb{R}^{D\times N}$, then choose a radius $\xi_X$ such that the above equation holds.
    Denote $[N]=\{1,2,\ldots,N\}$. The map $\xi_X(X)$ can be characterized by a total order in $X$, $\tau: [N] \rightarrow X$, where
    $$
    \xi_X(X) = [\tau(1),\tau(2),\ldots ,\tau(N)]\in \mathbb{R}^{D\times N}. 
    $$
    $\tau$ reorders $X$ by assigning $[N]=\{1,2,\ldots,N\}$ to $\{x_i\}^{N}_{i=1}$. 
    For any other point cloud $Y\in B_W(X, \varepsilon)$, there exists an optimal transport plan between $X$ and $Y$, denoted by $\omega_{XY}: X\rightarrow Y$, satisfying
    $$
    \omega_{XY}=\underset{\omega\in \Omega(X,Y)}{\arg\min}\left(  \sum_{x\in X} d^2_E(x, \omega(x)) \right)^{\frac{1}{2}} . 
    $$
    This maps assigns each element in $X$ to a distinct element in $Y$.
    We define an embedding $\xi_X$ from $Y$ to $\mathbb{R}^{D\times N}$ as follows: 
    $$
    \xi_X(Y) = [\omega_{XY}\circ \tau(1),\omega_{XY}\circ \tau(2),\ldots,\omega_{XY}\circ \tau(N)]\in \mathbb{R}^{D\times N}.
    $$

    We proceed to show that $\xi_X$ is an injective embedding. For any $Y, Z\in B_W(X, \varepsilon)$ with $\xi_X(Y)=\xi_X(Z)$, consider optimal transport plans $\omega_{XY}$ between $X$ and $Y$, and $\omega_{XZ}$ between $X$ and $Z$. 
    Since $\xi_X(Y)=\xi_X(Z)$, we have $\omega_{XY}(x)=\omega_{XZ}(x), \forall x\in X$. 
    Hence, for any $y\in Y$, 
    $$
    y = \omega_{XY}\circ (\omega_{XY})^{-1}(y) = \omega_{XZ}\circ (\omega_{XY})^{-1}(y)\in Z.
    $$
    Notice $\omega_{XZ}\circ (\omega_{XY})^{-1}$ is a bijection between $Y$ and $Z$. 
    Therefore, $Y=Z$ and $\xi_X$ is injective. 
    
    Next we calculate the radius $\xi_X$ that preserves the distance between any two point clouds. The goal is to find a radius $\varepsilon$ such that for any $Y,Z\in B_W(X,\varepsilon)$, the Wasserstein distance between the point clouds $Y$ and $Z$,
    \begin{align*}
        d_W(Y,Z) &= \left( \sum_{y\in Y} d^2_E(y, \omega_{YZ}(y)) \right)^{\frac{1}{2}}, 
    \end{align*}
    is equal to the Euclidean distance between the embedding $\xi_X(Y)$ and $\xi_X(Z)$,
    \begin{align*}
     d_E(\xi_X(Y),\xi_X(Z)) &=   \left(\sum_{i=1}^N d^2_E(\omega_{XY}\circ \tau(i), \omega_{XZ}\circ \tau(i)) \right)^{\frac{1}{2}}
    \\
    &= \left(   \sum_{x\in X} d^2_E(\omega_{XY}(x), \omega_{XZ}(x) \right)^{\frac{1}{2}}\\
    &= \left(   \sum_{y\in Y} d^2_E(y, \omega_{XZ}\circ (\omega_{XY})^{-1}(y) \right)^{\frac{1}{2}} . 
    \end{align*}
    Notice it suffices to find a radius $\varepsilon$ such that for any $Y,Z\in B_W(X,\varepsilon)$, $\omega_{YZ} = \omega_{XZ}\circ (\omega_{XY})^{-1}.$ Equivalently, the optimal bijection between $Y$ and $Z$ is given by the composition $\omega_{XZ}\circ (\omega_{XY})^{-1}: Y\rightarrow X\rightarrow Z$.
    The key idea is that if $Y$ and $Z$ are both sufficiently close to $X$ in the sense of the Wasserstein distance, then 
    the distance between $y\in Y$ and $(\omega_{XY})^{-1}(y)$ and the distance between $z\in Z$ and $(\omega_{XZ})^{-1}(z)$ will be small. 
    Hence, each point $y\in Y$ will be close to $\omega_{XZ}\circ(\omega_{XY})^{-1}(y)$ and thus  we will have $\omega_{YZ} = \omega_{XZ}\circ (\omega_{XY})^{-1}$. 
    The situation is illustrated in Figure~\ref{fig:mnfd_appendix}. 

    \begin{figure}[t]
    \centering
    \includegraphics[width=.7\textwidth]{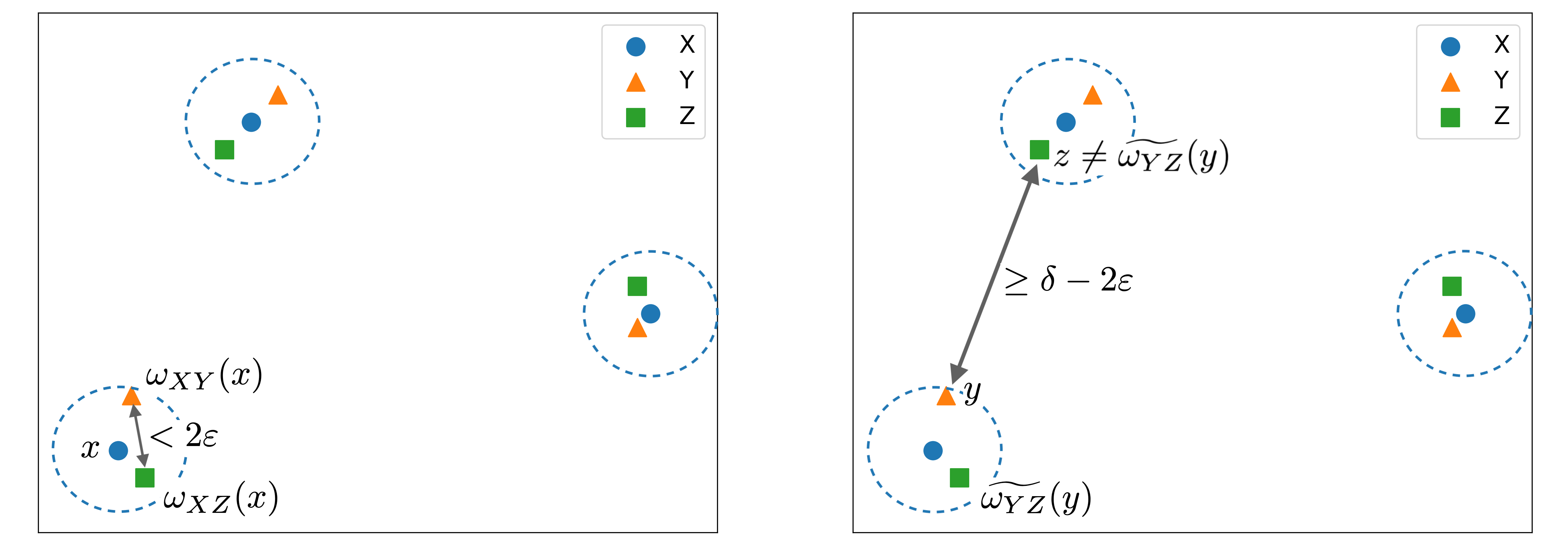}
    \caption{Shown are point clouds $Y$ (orange triangles) and $Z$ (green squares) in a Wasserstein neighborhood of point cloud $X$ (blue circles). Left: both $\omega_{XY}(x)\in Y$ and $\omega_{XZ}(x)\in Z$ are close to $x\in X$, and hence they are close to each other. Right: the distance between $y$ and $z\neq \widetilde{\omega_{YZ}}(y) \in Z$ is lower bounded.
    }
    \label{fig:mnfd_appendix}
    \end{figure}
    
    Denote $\omega_{XZ}\circ (\omega_{XY})^{-1}$ as $\widetilde{\omega_{YZ}}$.
    For any point cloud $Y\in B_W(X, \varepsilon)$, we have
    $$
    d_W(X,Y)=\left( \sum_{x\in X}d_E^2(x,\omega_{XY}(x))\right)^{\frac{1}{2}}<\varepsilon .
    $$
    Hence, 
    $$
    \max_{x\in X}d_E(x,\omega_{XY}(x))<  \varepsilon.
    $$
    This suggests for any two point clouds $Y,Z\in B_W(X, \varepsilon)$, 
    $$
    \max_{x\in X}d_E(\omega_{XY}(x),\omega_{XZ}(x))<2 \varepsilon.
    $$
    Equivalently,
    $$
    \max_{y\in Y}d_E(y,\widetilde{\omega_{YZ}}(y))<2 \varepsilon.
    $$
    Let $\delta$ denote the minimal pairwise distance of points in $X$:
    $$
    \delta=\min_{x_1,x_2\in X}d_E(x_1,x_2).
    $$
    Note $\delta$ is strictly greater than zero since points in $X$ are mutually different. 
    For a fixed point $y \in Y$, any point $z$ in $Z$ other than $\widetilde{\omega_{YZ}}$(y) has a lower-bounded distance from $y$: 
    $$
    \min_{z\in Z\setminus \{\widetilde{\omega_{YZ}}(y)\}}d_E(y,z) \geq \delta - 2 \varepsilon, \quad \forall y\in Y.
    $$
    Now consider $\varepsilon_X = \frac{\delta}{8 }$. We have
    $$
    d_E(y,\widetilde{\omega_{YZ}}(y))<2 \varepsilon_X = \frac{1}{4}\delta < \frac{3}{4}\delta = \delta - 2 \varepsilon_X \leq \min_{z\in Z\setminus \{\widetilde{\omega_{YZ}}(y)\}}d_E(y,z), \quad \forall y\in Y.
    $$
    Equivalently,
    $$
    \widetilde{\omega_{YZ}}(y) = \arg\min_{z\in Z}d_E(y,z),\quad \forall y\in Y.
    $$
    This means $\omega_{YZ} = \widetilde{\omega_{YZ}}$, which completes the proof.    
\end{proof}

\begin{corollary}
Let $\mathcal{M}$ be the set containing all point clouds in $\mathbb{R}^D$ with $N$ distinct points. $\mathcal{M}$, together with the metric topology induced by $2$-Wasserstein distance, forms a manifold of dimension $D\times N$. 
\end{corollary}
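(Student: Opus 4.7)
The plan is to harvest the Proposition as a source of local charts, and then verify the remaining ingredients needed for a topological manifold structure. First I would dispense with the ambient hypotheses: $(\mathcal{M}, d_W)$ is a metric space and hence automatically Hausdorff, and the collection of point clouds whose $N$ points have rational coordinates in $\mathbb{R}^D$ is a countable dense subset, so the metric topology is separable and therefore second countable.

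Second, I would observe that for each $X \in \mathcal{M}$, the Proposition provides a radius $\varepsilon_X > 0$ and an injective isometric embedding $\xi_X : B_W(X, \varepsilon_X) \to \mathbb{R}^{D \times N}$. Any isometry between metric spaces is a homeomorphism onto its image, so $\xi_X$ is already a homeomorphism from $B_W(X, \varepsilon_X)$ onto $\xi_X(B_W(X, \varepsilon_X))$. The only remaining question is whether this image is an open subset of $\mathbb{R}^{D \times N}$, so that the pair $(B_W(X, \varepsilon_X), \xi_X)$ qualifies as a chart of dimension $D \times N$.

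The crux, and the main obstacle, is to show that (possibly after shrinking $\varepsilon_X$) one has $\xi_X(B_W(X, \varepsilon_X)) = B_E(\xi_X(X), \varepsilon_X)$ inside $\mathbb{R}^{D \times N}$. The inclusion ``$\subseteq$'' is immediate from the isometry property. For ``$\supseteq$'', given any $q = [q_1, \ldots, q_N]$ with $d_E(q, \xi_X(X)) < \varepsilon_X$, I would set $Y := \{q_1, \ldots, q_N\}$ and need to verify (a) that the $q_i$ are pairwise distinct, so $|Y| = N$ and $Y \in \mathcal{M}$, and (b) that the natural matching $\tau(i) \leftrightarrow q_i$ is the unique optimal transport plan between $X$ and $Y$, so that $\xi_X(Y) = q$ rather than a permutation thereof. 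Both are triangle inequality arguments driven by the minimum pairwise distance $\delta := \min_{i \neq j} d_E(x_i, x_j) > 0$: taking $\varepsilon_X \leq \delta/8$ (as in the Proposition) guarantees that each $q_i$ lies strictly closer to $x_{\tau(i)}$ than to any other $x_{\tau(j)}$, preventing coordinate collisions and ruling out any competing bijection.

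Finally, the family $\{(B_W(X, \varepsilon_X), \xi_X)\}_{X \in \mathcal{M}}$ is a cover of $\mathcal{M}$ by charts to open subsets of $\mathbb{R}^{D \times N}$; combined with the Hausdorff and second countability properties, this gives $\mathcal{M}$ the structure of a topological manifold of dimension $m = D \times N$. Continuity of transition maps follows for free from the fact that every chart is an isometry, so on any overlap the transition is the composition of two isometries into $\mathbb{R}^{D \times N}$, namely the restriction of a rigid motion. The whole argument thus reduces to the openness of the image, which I expect to be the only step requiring real work.
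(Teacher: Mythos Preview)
Your proposal follows the same overall strategy as the paper---use the Proposition to obtain local isometric charts $\xi_X$ and assemble them into an atlas---but it is considerably more careful. The paper's proof simply observes that $\xi_X$ is a bijective isometry onto its image and concludes immediately that the collection of $\xi_X$'s forms an atlas; it neither checks that the image $\xi_X(B_W(X,\varepsilon_X))$ is open in $\mathbb{R}^{D\times N}$ nor addresses Hausdorffness and second countability. You correctly flag openness of the image as the one nontrivial point and give a clean argument (the triangle-inequality computation with $\varepsilon_X \le \delta/8$) showing that the image is in fact the full Euclidean ball $B_E(\xi_X(X),\varepsilon_X)$, which closes the gap. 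Your added verification of the global point-set hypotheses is also a genuine improvement over the paper's treatment. One small remark: your claim that transition maps are restrictions of rigid motions is true, but the reason is not merely ``composition of two isometries''---rather, on overlaps the two chart maps differ by a permutation of the $N$ coordinate blocks, which is an orthogonal linear map of $\mathbb{R}^{D\times N}$; in any case this is irrelevant for the \emph{topological} manifold claim, where continuity of transitions is automatic.
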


\begin{proof}
    By Proposition~\ref{prop}, for every $X\in\mathcal{M}$ one can find a neighborhood $B_W(X,\varepsilon_X)$ and an injective mapping $\xi_X : B_W(X,\varepsilon_X)\rightarrow \mathbb{R}^{D\times N}$ satisfying
    $$
    d_W(Y,Z) =  d_E(\xi_X(Y),\xi_X(Z)), \quad \forall Y,Z\in B_W(X,\varepsilon_X).
    $$
    Notice $\xi_X$ is a bijective isometry between $(B_W(X,\varepsilon_X), d_W)$ and $(\xi_X(B_W(X,\varepsilon_X)),  d_E)$.
    Hence, $\xi_X$ is open and continuous. 
    Consequently, $\xi_X$ is a homeomorphism, and $\{\xi_X: B_W(X,\varepsilon_X) \rightarrow \mathbb{R}^{D\times N}\}_{X\in\mathcal{M}}$, serving as an atlas, endows $\mathcal{M}$ with the manifold structure. 
\end{proof}

\subsection{Riemannian metric structure}
\label{app:riemannian}

Next we introduce the Riemannian metric structure for the manifold of point clouds, and show the distance induced by the Riemannian metric coincides with the Wasserstein distance.
To this end, consider an alternative definition for the space of point clouds:
$$
\mathcal{M}^\prime = \{ X : [N]\rightarrow \mathbb{R}^D \mid X \text{ is injective} \}/\sim_{S_N}.
$$
The equivalence relation is defined by:
$$
X_1\sim_{S_N} X_2 
\quad 
\Leftrightarrow 
\quad 
\exists \nu \in S_N: 
X_1 \circ \nu = X_2  
\quad 
\Leftrightarrow 
\quad 
\operatorname{Im}(X_1)=\operatorname{Im}(X_2). 
$$
Here $S_N$ denotes the $N$-symmetric group and $\operatorname{Im}(X)$ denotes the image of mapping $X$. 
Two mappings are deemed equivalent when their images are identical. Note the image of each mapping $X:[N]\rightarrow \mathbb{R}^D$ is a point cloud in $\mathbb{R}^D$ as we defined earlier, i.e. $\operatorname{Im}(X)\in \mathcal{M}$. In fact, the mapping $\mathcal{M}^\prime \rightarrow \mathcal{M}; [X]\mapsto \operatorname{Im}(X)$ gives the identification between the original and new definitions for the space of point clouds. For simplicity, we use one representative $X$ to denote the equivalence class $[X]$ and also refer to $X$'s as point clouds. 

This definition allows defining smooth curves in the point clouds space.
Specifically, a smooth curves in $\mathcal{M}^\prime$ is an element of the following set:
$$
\mathcal{C} = \{\gamma: [N]\times I \rightarrow \mathbb{R}^D \mid \gamma(\cdot, t) \text{ is injective, }  \forall t\in I; \gamma(k,\cdot) \in C^\infty(I),\forall k \in [N]\}/\sim_{S_N}.
$$ 
Here $I$ denote the closed unit interval $[0,1]$ and the equivalence relation is defined by 
$$
\gamma_1\sim_{S_N} \gamma_2 \quad 
\Leftrightarrow 
\quad 
\exists \nu \in S_N : 
\gamma_1(\nu(\cdot),\cdot)) = \gamma_2(\cdot,\cdot)) 
\quad 
\Leftrightarrow 
\quad 
\operatorname{Im}(\gamma_1(\cdot, t))=\operatorname{Im}(\gamma_2(\cdot, t)), \forall t. 
$$

\begin{figure}[t]
    \centering
    \includegraphics[width=.65\textwidth]{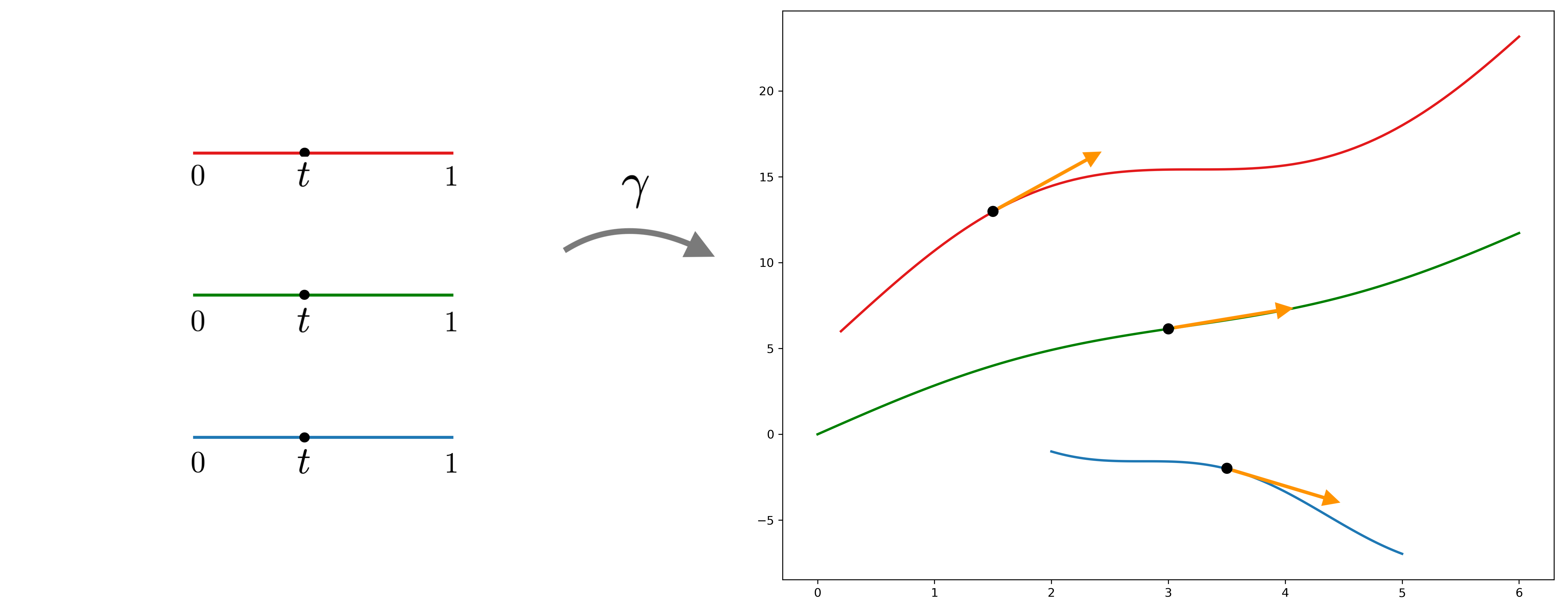}
    \caption{Shown is a curve in the point cloud space $[\gamma] \in \mathcal{C}$, which maps the set $\{1,2,3\} \times [0,1]$ (left) to the plane $\mathbb{R}^2$ (right). In the right panel, the collection of black dots represent the image of $\gamma$ at a specific time $t\in [0,1]$, which forms a point cloud in $\mathbb{R}^2$; the collection of orange arrows represents the velocity tangent vector at $\gamma_t$ along the curve $\gamma$.}
    \label{fig:pc-curve}
\end{figure}
A curve in point cloud space $[\gamma] \in \mathcal{C}$ is essentially a collection of $N$ curves in $\mathbb{R}^D$ such that at each time $t$ the $N$ points on those curves form a point cloud (see Figure~\ref{fig:pc-curve}). For simplicity we use $\gamma$ to denote the equivalence class $[\gamma]$. 

The \emph{tangent space} at point cloud $X$ is defined as 
$$
T_{X}\mathcal{M}^\prime = 
\{V\mid_{\operatorname{Im}(X)} : \operatorname{Im}(X)\rightarrow \mathbb{R}^D \mid V \text{ a vector field on }\mathbb{R}^D \}. 
$$

A \emph{Riemannian metric} is a manifold structure that smoothly assigns a positive-definite inner product $g_X(\cdot,\cdot)$ on the tangent space $T_X\mathcal{M}$ at each $X\in\mathcal{M}$. 
We introduce a Riemannian metric for $\mathcal{M}$ as follows: 
$T_{X}\mathcal{M}^\prime$:
$$
g_{X}(V,W)=\sum_{x \in \operatorname{Im}(X)} \langle V(x), W(x) \rangle.
$$ 
Endowed with this Riemannian metric, $\mathcal{M}^\prime$ is a Riemannian manifold, denoted by $(\mathcal{M}^\prime, g)$. 
We now discuss the distance induced by this Riemannian metric.
For a curve in point cloud space $\gamma \in \mathcal{C}$, the \emph{velocity tangent vector} at $\gamma_t$ along curve $\gamma$ is defined as (see an example in the right panel in Figure~\ref{fig:pc-curve}):
$$
\dot{\gamma}_t : \operatorname{Im}(\gamma_t)\rightarrow \mathbb{R}^D; \quad x\rightarrow \partial_t \gamma(k_x, t) . 
$$
Here $k_x\in [N]$ is the preimage of point $x$ under $\gamma_t$, i.e., $\gamma_t(k_x)=x$. The \emph{length of curve} is given by the length functional $L$: 
$$
L: \mathcal{C}\rightarrow \mathbb{R}; \quad \gamma \mapsto \int_I \sqrt{g_{\gamma_t}(\dot{\gamma}_t,\dot{\gamma}_t)}dt. 
$$
The distance induced by the Riemannian metric is the minimal length of curves between two points on the manifold.
\begin{definition}[Riemannian distance]
    Let $(\mathcal{M},g)$ be a Riemannian manifold. The Riemannian distance between two points $X,Y \in \mathcal{M}$ is defined as 
    $$
    d_g(X,Y) = \inf \{L(\gamma): \gamma\text{ a smooth curve in $\mathcal{M}$ connecting $X$ and $Y$}\} . 
    $$
\end{definition}
We point out the Riemannian distance coincides with the Wasserstein distance. To see this, we need the following result by \citet[Lemma 2.3, Chap. 9,][]{do1992riemannian}.
\begin{lemma}[\citealp{do1992riemannian}]
    \label{lem}
    Let $X_0,X_1$ be two points in Riemannian manifold $\mathcal{M}$, and $\gamma$ a curve joining $X_0$ to $X_1$. Then $\gamma$ minimizes the length functional if and only if $\gamma$ minimizes the energy functional defined as follows: 
    $$
    E: \mathcal{C}\rightarrow \mathbb{R}; \quad \gamma \mapsto \int_I g_{\gamma_t}(\dot{\gamma}_t,\dot{\gamma}_t)dt. 
    $$
    Moreover, when $\gamma$ is the minimizer, $L(\gamma)=\sqrt{E(\gamma)}$. 
\end{lemma}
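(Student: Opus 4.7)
The plan is to prove both directions of the equivalence by exploiting the Cauchy--Schwarz inequality applied to the integrand of $L$, combined with the fact that $L$ is reparametrization-invariant while $E$ is not. Writing $|\dot\gamma_t| := \sqrt{g_{\gamma_t}(\dot\gamma_t,\dot\gamma_t)}$ and using $I = [0,1]$, the first step is to observe that Cauchy--Schwarz on the integrand $|\dot\gamma_t|\cdot 1$ gives
\[
L(\gamma)^2 \;=\; \Bigl(\int_0^1 |\dot\gamma_t|\, dt\Bigr)^2 \;\leq\; \int_0^1 |\dot\gamma_t|^2\, dt \cdot \int_0^1 1\, dt \;=\; E(\gamma),
\]
with equality holding if and only if $|\dot\gamma_t|$ is constant in $t$, i.e., $\gamma$ has constant speed. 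This single inequality is the engine of the whole proof.

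Next I would invoke the standard reparametrization construction: given any smooth curve $\gamma:[0,1]\to\mathcal{M}^\prime$ with $L(\gamma)>0$, one can precompose with a diffeomorphism $\phi:[0,1]\to[0,1]$ fixing endpoints to obtain a curve $\gamma^* = \gamma\circ\phi$ of constant speed (proportional to arc length). By reparametrization-invariance of the length functional, $L(\gamma^*) = L(\gamma)$, and by the equality case above, $E(\gamma^*) = L(\gamma^*)^2 = L(\gamma)^2$. Thus among all reparametrizations of $\gamma$, the constant-speed one minimizes energy and realizes $L^2 = E$.

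For the forward direction, suppose $\gamma$ minimizes $E$ among curves joining $X_0$ to $X_1$. For any competitor $\tilde\gamma$, form its constant-speed reparametrization $\tilde\gamma^*$ and chain
\[
L(\gamma)^2 \;\leq\; E(\gamma) \;\leq\; E(\tilde\gamma^*) \;=\; L(\tilde\gamma^*)^2 \;=\; L(\tilde\gamma)^2,
\]
proving $\gamma$ minimizes $L$. Choosing $\tilde\gamma$ to be $\gamma$'s own constant-speed reparametrization $\gamma^*$ forces $E(\gamma) \leq E(\gamma^*) = L(\gamma)^2 \leq E(\gamma)$, which simultaneously yields $E(\gamma) = L(\gamma)^2$ (so in particular $\gamma$ already has constant speed) and the stated identity $L(\gamma) = \sqrt{E(\gamma)}$. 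Conversely, if $\gamma$ minimizes length, then after replacing it by its constant-speed reparametrization (which does not change $L$ and hence preserves the length-minimizing property), we get $E(\gamma) = L(\gamma)^2 \leq L(\tilde\gamma)^2 \leq E(\tilde\gamma)$ for every $\tilde\gamma$, so $\gamma$ also minimizes energy.

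The main obstacle I anticipate is the subtle interplay between reparametrization-invariance of $L$ and non-invariance of $E$: the ``if'' direction is only literally true once we agree to reparametrize the length minimizer to constant speed, since any reparametrization of a length minimizer is still a length minimizer but typically only the constant-speed parametrization is an energy minimizer. A secondary technical point is verifying that the reparametrization $\phi$ is smooth in the setting of curves in $\mathcal{C}$, which requires that $|\dot\gamma_t|$ be strictly positive along $\gamma$ (excluding stationary curves is harmless, since they trivially satisfy $L=E=0$).
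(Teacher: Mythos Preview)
The paper does not prove this lemma; it is quoted verbatim as Lemma~2.3, Chapter~9 of \cite{do1992riemannian} and used as a black box in the proof of the subsequent proposition. So there is no ``paper's own proof'' to compare against.

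Your argument is the standard one and is correct: Cauchy--Schwarz gives $L(\gamma)^2 \le E(\gamma)$ with equality exactly at constant speed, and combining this with the reparametrization-invariance of $L$ (but not of $E$) yields both directions. You have also correctly flagged the one genuine subtlety: the converse direction (``length minimizer $\Rightarrow$ energy minimizer'') is only literally true after passing to the constant-speed reparametrization, since a non-constant-speed reparametrization of a length minimizer still minimizes $L$ but strictly increases $E$. The lemma as stated in the paper elides this point, and your proposal handles it the right way. The identity $L(\gamma)=\sqrt{E(\gamma)}$ for the minimizer then falls out of the equality case, exactly as you wrote.
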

Now we proceed to introduce the main statement of this subsection.
\begin{proposition}
    Let $(\mathcal{M}^\prime, g)$ be the Riemannian manifold of point clouds. Then the Riemannian distance is equivalent to Wasserstein distance, $d_g = d_W$. 
\end{proposition}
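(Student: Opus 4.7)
The plan is to establish the two inequalities $d_g(X_0,X_1) \leq d_W(X_0,X_1)$ and $d_g(X_0,X_1) \geq d_W(X_0,X_1)$ separately, exploiting the fact that the Riemannian metric $g$ was defined precisely so that the natural coordinates on $\mathcal{M}^\prime$ inherited from representatives in $(\mathbb{R}^D)^N$ are isometric, locally, to the standard Euclidean structure.

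For the easier direction $d_g \geq d_W$, I would fix a smooth curve $\gamma \in \mathcal{C}$ connecting $X_0$ to $X_1$ and pick a representative, so that $\gamma$ is realized as a smooth map $\Gamma = (\gamma(1,\cdot),\ldots,\gamma(N,\cdot)) : I \to (\mathbb{R}^D)^N \cong \mathbb{R}^{DN}$. By the definition of $g$, the Riemannian length $L(\gamma)$ coincides with the Euclidean length of $\Gamma$, and by the fundamental theorem of calculus combined with the triangle inequality for the vector-valued integral,
\[
L(\gamma) \;=\; \int_{I} \|\Gamma'(t)\|_{\mathbb{R}^{DN}}\, dt \;\geq\; \|\Gamma(1)-\Gamma(0)\|_{\mathbb{R}^{DN}} \;=\; \sqrt{\sum_{k=1}^N \|\gamma(k,1) - \gamma(k,0)\|^2}.
\]
The endpoint correspondence $\gamma(k,0) \mapsto \gamma(k,1)$ is a bijection $\omega \in \Omega(X_0,X_1)$, so the right-hand side is an admissible transport cost and hence at least $d_W(X_0,X_1)$. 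Taking the infimum over $\gamma$ yields $d_g \geq d_W$.

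For the reverse inequality, I plan to build curves whose length approaches $d_W(X_0,X_1)$ from an optimal bijection $\omega^\ast : X_0 \to X_1$. Pick representatives $\gamma_0,\gamma_1$ with $\gamma_1(k) = \omega^\ast(\gamma_0(k))$ and consider the affine interpolation $\gamma(k,t) = (1-t)\gamma_0(k) + t\gamma_1(k)$. A direct computation gives $\|\Gamma'(t)\|^2 = \sum_k \|\gamma_1(k)-\gamma_0(k)\|^2 = d_W(X_0,X_1)^2$ for all $t$, so this curve attains $E(\gamma) = d_W(X_0,X_1)^2$; by Lemma~\ref{lem} its length is $d_W(X_0,X_1)$ provided it lies in $\mathcal{M}^\prime$.

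The main obstacle is precisely this injectivity constraint: the intermediate configurations $\gamma(\cdot,t)$ may fail to be injective at finitely many $t \in (0,1)$ where two linearly moving points collide. For $D \geq 2$ this is handled by a codimension argument, since the set of pairs $(\gamma_0,\gamma_1)$ for which any two affine paths intersect is a finite union of subvarieties of positive codimension in $(\mathbb{R}^D)^N \times (\mathbb{R}^D)^N$; arbitrarily small perturbations of $\gamma_0$ and $\gamma_1$ (keeping the same induced point clouds, up to a further negligible correction near the endpoints) yield smooth curves in $\mathcal{M}^\prime$ whose length is within $\varepsilon$ of $d_W(X_0,X_1)$. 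For $D = 1$ one instead invokes the classical fact that the optimal 1-Wasserstein bijection is the monotone rearrangement, under which paired affine paths preserve the order of the points and therefore never cross. Either way we obtain $d_g \leq d_W$, completing the proof.
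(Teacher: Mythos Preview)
Your proof is correct and lands on the same underlying mechanism as the paper --- the Riemannian length of a curve in $\mathcal{M}'$ equals the Euclidean length of any lift to $(\mathbb{R}^D)^N$, so geodesics are straight lines together with an optimal endpoint bijection --- but the packaging differs. The paper works through the energy functional and Lemma~\ref{lem}, writing $E(\gamma)=\sum_k E[\gamma(k,\cdot)]$ and noting that straight lines minimize each summand, whence the remaining minimization over bijections is exactly optimal transport; you instead split into the two inequalities $d_g\geq d_W$ and $d_g\leq d_W$ and bound the length directly. The substantive difference is that you explicitly confront the injectivity constraint built into $\mathcal{C}$ (straight-line interpolations can collide at intermediate times), treating $D\geq 2$ by a codimension/perturbation argument and $D=1$ via the monotone rearrangement --- a point the paper's own proof passes over in silence. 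Two minor wording issues: your perturbation step is phrased a little loosely (perturbing $\gamma_0,\gamma_1$ changes the underlying point clouds; the cleaner version perturbs the path in the interior with endpoints fixed, or perturbs one endpoint and appends a short correcting segment), and ``optimal $1$-Wasserstein bijection'' should read ``optimal bijection for the $2$-Wasserstein distance in dimension $D=1$''. Neither affects the validity of the argument.
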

\begin{proof}
Consider two point clouds $X_0, X_1 \in \mathcal{M}^\prime$. Let $L$ be the length functional, and $E$ be the energy functional of all curves connecting $X_0$ and $X_1$. For any curve $\gamma$ joining $X_0$ to $X_1$, we have
\begin{align*}
E(\gamma)
&=\int_I g_{\gamma_t}\left(\dot{\gamma}_t, \dot{\gamma}_t\right) dt\\
&= \int_I \sum_{x\in \operatorname{Im}(\gamma_t)} \langle \dot{\gamma}_t(x),\dot{\gamma}_t(x) \rangle dt \\
&= \int_I \sum_{x\in \operatorname{Im}(\gamma_t)} \| \partial_t \gamma(k_x, t) \|^2 dt \\
&= \int_I \sum_{k=1}^N \| \partial_t \gamma(k, t) \|^2 dt \\
&= \sum_{k=1}^N \int_I \| \partial_t \gamma(k, t) \|^2 dt \\
&= \sum_{k=1}^N E[\gamma(k,\cdot)] . 
\end{align*}
The above equations along with Lemma~\ref{lem} indicate the minimizer of the length functional $L$ coincides with the minimizer of $\sum_{k=1}^N E(\gamma(k,\cdot))$. Recall for each $k$, $\gamma(k,\cdot)$ is a curve in $\mathbb{R}^D$ joining $X_0(k)$ to $X_1(k)$. In Euclidean spaces, it is known that straight lines minimize the length functional, which implies, by Lemma~\ref{lem}, they also minimize $E(\gamma(k,\cdot))$. Specifically for fixed end points $X_0(k)$ and $X_1(k)$, $E(\gamma(k,\cdot))$ has minimal value $d^2_E(X_0(k), X_1(k))$. Therefore, finding the minimizer of $\sum_{k=1}^N E(\gamma(k,\cdot))$ is equivalent to finding the bijection between $\operatorname{Im}(X_0)$ and $\operatorname{Im}(X_1)$ that produces the minimal value of the sum of squared distances between points paired by the bijection. This exactly coincides with the optimal transport problem. In conclusion, we have
\begin{align*}
d_g(X_0,X_1) &= \min_\gamma L(\gamma)\\
&=\min_\gamma \sqrt{E(\gamma)}\\
&=\min \left(\sum_{k=1}^N E[\gamma(k,\cdot)]\right)^{\frac{1}{2}}\\
&=\min_{\omega\in\Omega(\operatorname{Im}(X_0),\operatorname{Im}(X_1))}\left(\sum_{x\in \operatorname{Im}(X_0)} d^2(x, \omega(x))\right)^{\frac{1}{2}}\\
&=d_W(\operatorname{Im}(X_0),\operatorname{Im}(X_1)) . 
\end{align*}
This is what was claimed. 
\end{proof}

\subsection{Vector fields}
\label{app:vector-fields} 

In this section, we provide definitions for perturbation vector fields and gradient vector fields for the manifold of point clouds.

Assume $\pi:\mathcal{M} \rightarrow \mathcal{M}$ is a perturbation mapping. As introduced in Section~\ref{sec:data-variations}, the perturbation vector fields $V_\pi$ in the Euclidean space is defined as:
$$
V_\pi(X) = \pi(X)-X, \quad \forall X\in\mathcal{M}.
$$

The main idea behind this definition is that each tangent vector $V_\pi(X)$ specifies the direction of the straight line connecting $X$ and $\pi(X)$. For general Riemannian manifolds, the notion of straight lines is generalized by minimizing geodesics. Formally, the curve between two points on manifold that minimizes the length functional is a \emph{minimizing geodesic}. Then the question arises whether there exists a minimizing geodesic between any two points on the manifold. To address this, we introduce the concepts of geodesic completeness and the Hopf-Rinow theorem. 

As introduced in Appendix~\ref{app:riemannian}, the Riemannian metric induces a distance $d_g$ on the manifold $\mathcal{M}$. A sequence $\{X_i\}_{i \in \mathbb{Z}^+}$ of points on $(\mathcal{M},d_g)$ is a \emph{$d_g$-Cauchy sequence} if for any positive number $\varepsilon$ there exists a positive integer $N$ such that $d_g(X_i,X_j)<\varepsilon, \forall i,j >N$. 

\begin{definition}[Geodesically complete manifold] The Riemannian manifold $(\mathcal{M}, g)$ is geodesically complete if any $d_g$-Cauchy sequence $\{X_i\}_{i \in \mathbb{Z}^+}$ converges in $\mathcal{M}$: $\exists Y\in\mathcal{M}$ such that $\lim_{i\rightarrow \infty} d_g(X_i,Y)=0$. 
\end{definition}

Since the Euclidean space $\mathbb{R}^D$ is a complete metric space, automatically the Riemannian manifold of point clouds is geodesically complete. Also notice this manifold is connected, since there exists a path connecting any two point clouds. The Hopf-Rinow theorem ensures the existence of minimizing geodesics between any two point clouds. 

\begin{theorem}[Hopf-Rinow theorem] 
    Let $(\mathcal{M},g)$ be a connected Riemannian manifold. 
    If $(\mathcal{M},d_g)$ is geodesically complete, there exists a minimizing geodesic between any two points on $\mathcal{M}$. 
\end{theorem}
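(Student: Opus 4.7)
The plan is to follow the classical proof strategy of Hopf--Rinow: fix $p,q \in \mathcal{M}$ with $r := d_g(p,q)$, construct a unit-speed geodesic $\gamma$ starting at $p$ and initially pointing toward $q$, and then show by a connectedness argument in $[0,r]$ that $\gamma$ reaches $q$ at time $r$. Using the local structure of a Riemannian manifold, I would first choose $\delta > 0$ so that $\exp_p$ is a diffeomorphism on the closed $\delta$-ball in $T_p\mathcal{M}$ and the image $\overline{B}_\delta(p)$ is compact. If $r \leq \delta$, the radial geodesic in this chart directly connects $p$ to $q$ and there is nothing more to prove; otherwise, by compactness of the geodesic sphere $S_\delta(p)$ and continuity of $d_g(\,\cdot\,,q)$, I pick $x_0 = \exp_p(\delta v)$ on $S_\delta(p)$ minimizing $d_g(\,\cdot\,,q)$, with $|v|_g = 1$, and set $\gamma(t) := \exp_p(tv)$ on its maximal interval of existence.

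The key is to study the set
\begin{equation*}
A = \{\, t \in [\delta, r] : \gamma \text{ is defined at } t \text{ and } d_g(\gamma(t), q) = r - t \,\},
\end{equation*}
and to show $A = [\delta, r]$ by proving it is nonempty, closed, and open in $[\delta, r]$. That $\delta \in A$ follows because every continuous curve from $p$ to $q$ must cross $S_\delta(p)$, forcing $r \geq \delta + d_g(x_0, q)$, while the triangle inequality supplies the reverse inequality. Closedness is immediate from joint continuity of $\gamma$ and $d_g$. For openness at $t_0 \in A$ with $t_0 < r$, I would repeat the sphere construction at $\gamma(t_0)$: choose a small $\delta' > 0$ adapted to a normal neighborhood of $\gamma(t_0)$, and pick $y_0$ on $S_{\delta'}(\gamma(t_0))$ minimizing $d_g(\,\cdot\,,q)$. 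The defining equality at $t_0$ combined with the triangle inequality force $d_g(p, y_0) = t_0 + \delta'$, so the concatenation of $\gamma|_{[0,t_0]}$ with the short minimizing geodesic from $\gamma(t_0)$ to $y_0$ is itself minimizing, hence smooth, and local uniqueness of geodesics with prescribed initial conditions identifies it with $\gamma$, yielding $y_0 = \gamma(t_0+\delta') \in A$.

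The Cauchy completeness hypothesis enters precisely to ensure $\gamma$ is defined on all of $[0,r]$. If $\gamma$ is defined only on a maximal interval $[0,T)$ with $T \leq r$, then unit speed makes $\gamma$ $1$-Lipschitz with respect to $d_g$, so $\{\gamma(t_n)\}$ is $d_g$-Cauchy for any sequence $t_n \to T$; by hypothesis it converges to some $X \in \mathcal{M}$, and local existence of geodesics in a neighborhood of $X$ extends $\gamma$ past $T$, contradicting maximality. Once $A = [\delta,r]$, the endpoint $r \in A$ gives $\gamma(r) = q$, and $\gamma|_{[0,r]}$ has length exactly $r$, so it is a minimizing geodesic joining $p$ to $q$.

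The main obstacle I anticipate is the openness step, specifically the identification of $y_0$ with $\gamma(t_0 + \delta')$ rather than with some a priori unrelated geodesic leaving $\gamma(t_0)$. The delicate point is pinning down that the initial direction of the second segment is $\dot{\gamma}(t_0)$; I would handle this by observing that the concatenated curve from $p$ to $y_0$ has length equal to the distance $t_0 + \delta'$, hence is globally minimizing and cannot have corners by the first variation of arc length, so that uniqueness of geodesics with given initial conditions forces the entire curve to coincide with $\gamma$ on $[0, t_0+\delta']$.
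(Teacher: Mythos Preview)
The paper does not prove the Hopf--Rinow theorem; it simply states it as a classical result in the appendix in order to justify that minimizing geodesics exist between any two point clouds (which is then used to define perturbation vector fields). Your proposal reproduces the standard textbook argument, and the outline is correct, including your handling of the corner issue in the openness step via the first variation of arc length. One presentational remark: the ``nonempty, closed, and open'' framing is slightly loose because the sphere construction you describe only furnishes right-openness at $t_0 < r$; the cleaner phrasing is to set $T = \sup A$, use closedness to get $T \in A$, and then run your construction at $\gamma(T)$ to contradict $T < r$.
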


Now we formally define the perturbation vector fields. 

\begin{definition}[Perturbation vector field] 
\label{app:def:perturb}

Let $(\mathcal{M},g)$ be a geodesically complete manifold, and $\pi:  \mathcal{M}\rightarrow\mathcal{M}$ be a perturbation mapping. The perturbation vector field $V_\pi$ is defined as 
$$ 
V_\pi: \mathcal{M}\rightarrow \sqcup_X T_X \mathcal{M}; \quad X\mapsto V_\pi(X) = \dot{\gamma}_{X, \pi(X)}(0),
$$ 
where $\gamma_{X, \pi(X)}$ is a minimizing geodesic $\gamma_{X, \pi(X)} : I \rightarrow \mathcal{M}$ with $\gamma_{X, \pi(X)}(0)=X$ and $\gamma_{X, \pi(X)}(1)=\pi(X)$. 
\end{definition}

We proceed to define the Riemannian gradient vector field. Assume $\rho$ is a real-valued smooth function on $\mathcal{M}$. In the cases of Euclidean spaces, the gradient vector can be characterized by the following property:
$$
\langle \nabla\rho(X), v \rangle = \frac{\partial}{\partial v}\rho, \quad \forall X\in \mathbb{R}^m, v\in T_X\mathbb{R}^m =\mathbb{R}^m. 
$$

For general Riemannian manifolds, the notion of directional derivatives is generalized by derivations.

\begin{definition}[Derivative] 
Let $\mathcal{M}$ be a manifold, and $C^\infty(\mathcal{M})$ be the space of smooth functions on $\mathcal{M}$. A derivative at $X\in\mathcal{M}$ is a linear map $\partial: C^\infty(\mathcal{M})\rightarrow\mathbb{R}$ satisfying the Leibniz identity: 
$$
\partial(fg) = \partial(f)\cdot g(X) + \partial(g)\cdot f(X).
$$
\end{definition}

For a fixed point $X\in\mathcal{M}$, it turns out that each tangent vector $v\in T_X\mathcal{M}$ can be uniquely associated with a derivative, denoted by $\partial_v$, in the sense that $\partial_v(\rho)$ measures the rate of change of the function value $\rho(X)$, moving through $X$ with the velocity specified by $v$.
Detailed discussion regarding the equivalence between tangent vectors and derivations can be found in the work of \citet[Chapter~8,][]{tu2011manifolds}. Now we provide the definition of the Riemannian gradient. 

\begin{definition}[Gradient vector field] 
\label{app:def:grad} 
Let $(\mathcal{M},g)$ be a Riemannian manifold, and $\rho: \mathcal{M}\rightarrow \mathbb{R}$ a smooth function. The gradient vector field of $\rho$, denoted by $\nabla \rho$, is defined as the vector field 
$$
\nabla\rho: \mathcal{M}\rightarrow \sqcup_X T_X \mathcal{M}; \quad X\mapsto \nabla\rho(X)
$$    
satisfying the property: 
$$
g_X(\nabla\rho(X), v) = \partial_v(\rho), \quad \forall X\in\mathcal{M}, v\in T_X\mathcal{M}.
$$
\end{definition}

Note that Definition \ref{app:def:perturb} and Definition \ref{app:def:grad} are applicable to other types of data, provided that the data space can be equipped with a geodesically complete Riemannian manifold structure.

\subsection{Pull-back metric}
\label{app:pull-back metric}
In this section, we provide definitions for pull-back metric for general encoding mappings between Riemannian manifolds. 

Let $(\mathcal{M}, g_{\mathcal{M}})$ be the Riemannian manifold for input data, $(\mathcal{N}, g_{\mathcal{N}})$ be the Riemannian manifold for output data, and $f:\mathcal{M}\rightarrow \mathcal{N}$ be a differential encoding mapping between the input space and output space. Recall that for any $X$ in $\mathcal{M}$ the Jacobian of $f$ at $X$, denoted by $J^f_X$, is a linear mapping between $T_X\mathcal{M}$ and $T_{f(X)}\mathcal{N}$.
The \emph{pull-back metric} induced by $f$, denoted by $g^f$, is the structure that assigns the following inner-product on the tangent space $T_X\mathcal{M}$ for each $X\in\mathcal{M}$:
$$
g^f(V,W) = g_\mathcal{N}(J_X^f(V), J_X^f(W)). 
$$
Then the \emph{pull-back norm} for any tangent vector $V$ at $X$ is defined as
$$
\|V\|_f = \sqrt{g^f(V,V)}.
$$
Please note that when one considers $\mathcal{N}$ as a vector space and $g_{\mathcal{N}}$ as the Euclidean metric, the above definition for pull-back metric reduces to the one that we introduced in Section~\ref{sec:jac}. Meanwhile, we point out that, while the Riemannian metric on $\mathcal{M}$ does not affect the pull-back metric, it is still necessary in our approach since it's essential in defining the perturbation vector field and gradient vector field (see Definition \ref{app:def:perturb} and Definition \ref{app:def:grad}).

\section{Differentiability of the mapping from point clouds to PIs}
\label{app:pi-differentiability}

We provide details for the differentiability of the mapping from point cloud data to PIs and the computation of the Jacobian. 
The encoding mapping from the point clouds to PIs can be viewed as the composition of the following two maps:
$$
\begin{aligned}
f: \quad &\mathcal{M}\rightarrow \operatorname{Bar} \rightarrow \mathcal{N}\\
&X \mapsto \operatorname{PD}_k(\operatorname{Filt}(X)) \mapsto \operatorname{PI}(\operatorname{PD}_k(\operatorname{Filt}(X))).
\end{aligned}
$$
Here $\operatorname{Bar}$ denotes the set of PDs, $\operatorname{Filt}(X)$ denotes a filtration on a point cloud $X$, and $\operatorname{PD}_k(X)$ denotes the $k$-dimensional PD on filtration $\operatorname{Filt}(X)$. 

Recall that by definition, a persistence diagram is a multiset consisting of bounded and infinite persistence intervals, with its elements in $\mathbb{R}\times \bar{\mathbb{R}}$ where $\bar{\mathbb{R}} \triangleq \mathbb{R}\cup \{+\infty\}$.
As a space of multisets, $\operatorname{Bar}$ does not naturally come equipped with a differential structure. However, one can study the differentiability for maps from or to $\operatorname{Bar}$ by equipping $\operatorname{Bar}$ with a \emph{diffeology} structure \citep[Section 3.5]{LOT22}. Specifically, for any positive integers $i$ and $j$, $\operatorname{Bar}$ is covered by the map $\mathbb{R}^{2i} \times \mathbb{R}^j \overset{S_{i,j}}{\longrightarrow} \operatorname{Bar}$, where $\mathbb{R}^{2i} \times \mathbb{R}^j$ can be thought of as the space of \emph{ordered} PDs consisting of a fixed number $i$ (resp.\ $j$) of bounded (resp.\ infinite) persistence intervals and $S_{i,j}$ is the quotient map modulo the order. Note the action of $S_{i,j}$ would turn vectors into multisets. Then a map $A:\mathcal{M}\rightarrow \operatorname{Bar}$ is said to be differentiable at $X\in\mathcal{M}$ if there exists an open neighborhood $U$ of $X$, positive integers $i, j$ and a differential map $\Tilde{A}: U \rightarrow \mathbb{R}^{2i} \times \mathbb{R}^j$ such that $A= S_{i,j} \circ \Tilde{A}$: 
$$
\begin{tikzpicture}
    \node (M) at (0,0) {$\mathcal{M}$};
    \node[right=of M] (Bar) {$\operatorname{Bar}$};
    \node[above=of Bar] (R) {$\mathbb{R}^{2i} \times \mathbb{R}^j$};
    
    \draw[->] (M)--(Bar) node [midway,above] {$A$};
    \draw[->] (M)--(R) node [midway,above] {$\Tilde{A}$};
    \draw[->] (R)--(Bar) node [midway,right] {$S_{i,j}$};
\end{tikzpicture}
$$ 
Similarly, a map $B:\operatorname{Bar}\rightarrow \mathcal{N}$ is said to be differentiable at $Z\in \operatorname{Bar}$ if for all positive integers $i,j$ and all vectors $\Tilde{Z}\in\mathbb{R}^{2i} \times \mathbb{R}^j$ such that $S_{i,j}(\Tilde{Z})=Z$, the map $B\circ S_{i,j}: \mathbb{R}^{2i} \times \mathbb{R}^j\rightarrow \mathbb{N}$ is differential on an open neighborhood of~$\Tilde{Z}$: 
$$
\begin{tikzpicture}
    \node (Bar) at (0,0) {$\operatorname{Bar}$};
    \node[above=of Bar] (R) {$\mathbb{R}^{2i} \times \mathbb{R}^j$};
    \node[right=of Bar] (N) {$\mathcal{N}$};
    
    \draw[->] (Bar)--(N) node [midway,above] {$B$};
    \draw[->] (R)--(Bar) node [midway,right] {$S_{i,j}$};
\end{tikzpicture}
$$
We refer reader to \cite{LOT22} for a detailed introduction to the analysis above. 

\paragraph{Differentiability for the map from point clouds to PDs} 
Let $X\in\mathcal{M}$ be a point cloud, $U$ be an open neighborhood of $X$, and $\operatorname{Cl}(X,R)$ be the clique complex on the $R$-neighborhood graph of $X$. A filtration $\operatorname{Filt}$ would induce a total order on the simplices in $\operatorname{CL}(X,R)$ in the sense that $\sigma \preceq \tau$ if and only if $\phi(\sigma) \leq \phi(\tau)$ where $\phi$ is the filtration function determining the filtration.
First let us assume that 1) $\operatorname{Filt}$ induces a strict total order at point cloud $X$, i.e., simplices in $\operatorname{Cl}(X,R)$ have distinct filtration values, and 2) the distance between any two points in $X$ does not equal to $R$. 
The filtration $\operatorname{Filt}$ then defines a map sending a point cloud to a vector of filtration values as follows: 
$$
\Tilde{A}_1: \quad X \mapsto \left(\phi(\sigma_1), \phi(\sigma_2), \ldots, \phi(\sigma_{|\operatorname{Cl}(X,R)|}\right) \in \mathbb{R}^{|\operatorname{Cl}(X,R)|}, 
$$ 
where $\phi(\sigma_1) < \phi(\sigma_2) < \cdots < \phi(\sigma_{|\operatorname{Cl}(X,R)|})$.
Note that, according to our second assumption, i.e., pairwise distances of points in $X$ do not equal to $R$, one can assume that all point clouds in $U$ would have the same number of simplices in the corresponding clique complexes. Specifically, consider any point cloud $Y\in U$ and the optimal transport plan $\omega_{XY}$ from $X$ to $Y$ (see Appendix~\ref{app:mnfd-structure}). There exists a bijection between $\in\operatorname{Cl}(X,R)$ and $\operatorname{Cl}(Y,R)$: 
$$
\Delta_{XY}: \sigma^X \mapsto \{ \omega_{XY}(x) : x\in\sigma^X \}\in \operatorname{Cl}(Y,R). 
$$
Hence, the map $\Tilde{A}_1$ is well-defined on $U$. Note also that $\Tilde{A_1}$ is differentiable at $X$ if the filtration value $\phi(\sigma)$ is differentiable at $X$ for any simplex $\sigma\in\operatorname{Cl}(X,R)$. 
Then given a homology degree $k$, the filtration would define a \emph{barcode template}, which is a multiset $P_k(X)$ of pairs of simplices in $\operatorname{Cl}(X,R)$ and a multiset $U_k(X)$ of simplices in $\operatorname{Cl}(X,R)$, such that \citep{LOT22}: 
$$
\operatorname{PD}_k(\operatorname{Filt}(X))=\{(\phi(\sigma),\phi(\tau))\}_{(\sigma, \tau)\in P_k(X)} \cup \{(\phi(\sigma^\prime),+\infty)\}_{\sigma^\prime\in U_k(X)}. 
$$
This defines a map $\Tilde{A}_2$ sending the vector of ordered filtration values to a vector of ordered PD:
$$
\Tilde{A}_2: \quad \Tilde{A}_1(X) \mapsto \Tilde{Z}=
\Big(
\big( 
(\phi(\sigma_1),\phi(\tau_1)), \ldots, (\phi(\sigma_i),\phi(\tau_i))
\big), 
\quad 
\big(
\phi(\sigma^\prime_1),\ldots,\phi(\sigma^\prime_j)
\big)
\Big) \in\mathbb{R}^{2i}\times \mathbb{R}^{j} . 
$$ 
In the case where the order induced by the filtration is a strict total order, the barcode template is uniquely defined, and hence the map $\Tilde{A}_2$ is well defined (see \citealt{bruel2019topology} for a detailed discussion). 
Meanwhile, one can assume that the barcode template would be stable on $U$ in the sense that if $(\sigma, \tau)\in P_k(X)$ (resp. $\sigma^\prime \in U_k(X)$), then $(\Delta_{XY}(\sigma), \Delta_{XY}(\tau))\in P_k(Y)$ (resp. $\Delta_{XY}(\sigma^\prime) \in U_k(Y)$). Hence, the map $\Tilde{A}_2$ can be viewed as a fixed permutation on $\Tilde{A}_1(U)$ and is hence differentiable. 
In practice, the \emph{barcode template} can be obtained via the matrix reduction algorithm; we refer readers to \cite{LOT22} for more details regarding the existence and construction of the barcode template. 
Please note that the map $\Tilde{A}=\Tilde{A}_2 \circ \Tilde{A}_1$ would be the desired differentiable map such that $\operatorname{PD_k}(\operatorname{Filt}(X))=S_{i,j} \circ \Tilde{A}(X)$ and with this map we have shown the map $\operatorname{PD}_k(\operatorname{Filt}(\cdot))$ is differentiable at $X$ if the filtration defines a strict total order on $\operatorname{Cl}(X,R)$ and the pairwise distances of points in $X$ do not equal to $R$. 

At certain point clouds, the filtration could induce a non-strict total order since multiple simplices can have the same filtration value. In such cases, the map $\Tilde{A}_2$ may no longer be well-defined since the barcode template is not unique. Consequently, the derivative of $\Tilde{A}_2$ should be considered as a subderivative \citep{bruel2019topology} and the map $\operatorname{PD}_k(\operatorname{Filt}(\cdot))$ is not differentiable at such point clouds. In practice, this can be resolved by refining the total order into a strict order deterministically or randomly \citep[see, e.g.,][]{skraba2017randomly}, which corresponds to selecting an element in the subderivative. 
On the other side, at a point cloud $X$ in which there are two points with a distance between them equal to $R$, a small perturbation on these critical points can result in removing simplices in $\operatorname{Cl}(X,R)$. Hence, the the map $\Tilde{A}_1$ may not be continuous at $X$ even if one considers its codomain as a space of sequences, and the map $\operatorname{PD}_k(\operatorname{Filt}(\cdot))$ would be non-differentiable at such point cloud. However, please note that the set of point clouds with such property has a dense and open complement in $\mathcal{M}$.

\paragraph{Differentiability for the map from PDs to PIs}
The map from PDs to PIs is guaranteed to be differentiable everywhere in $\operatorname{Bar}$ \citep[Proposition 7.3]{LOT22}. We note that here one needs to consider the subset of $\operatorname{Bar}$ consisting of only bounded intervals. In practice, for computations, one sets the death value of infinite intervals to the maximum filtration value $R$.

\paragraph{Generic differentiability for PH encoding} 
As shown above, the PH encoding is differentiable at point clouds satisfying 1) the filtration defines a strict total order and 2) the pairwise distances of points in the point cloud do not equal to $R$. We now point out that for Vietoris-Rips filtration and Height filtration, the PH encoding is generically differentiable, i.e., the set where PH encoding is differentiable is dense in $\mathcal{M}$. As we indicated above, the set of point clouds that satisfies the second requirement is open and dense in $\mathcal{M}$. Hence, it is suffices to show the set of point clouds where the filtration defines a strict total order is also dense and open, as the intersection of open, dense sets would still be dense. 

For Height filtration, by \citet[Proposition 5.2,][]{LOT22} we claim that the filtration defines a strict total order for point clouds outside the following set 
$$
\Tilde{\mathcal{M}} = \{ X\in \mathcal{M} : \exists \sigma_1, \sigma_2 \in \operatorname{Cl}_0(X,R), \phi(\sigma_1)=\phi(\sigma_2) \},
$$
where $\operatorname{Cl}_0(X,R)$ denotes the set of all vertices in $\operatorname{Cl}(X,R)$. Hence, the PH encoding would be generically differentiable since the complement of $\Tilde{\mathcal{M}}$ is dense in $\mathcal{M}$. 
We point out that \cite{LOT22} consider a slightly different setting where the input space $\mathcal{M}$ is considered as a parameter space for the map $\phi_0 \triangleq \phi \mid_{\operatorname{Cl}_0(X,R)}$. For example, consider $\mathcal{M}=\mathbb{S}^{D-1}$ and for each $\theta\in\mathcal{M}$ define $\phi_0(v; \theta)=\langle v, \theta\rangle, \forall v\in \operatorname{Cl}_0(X,R)$. 
However, one can easily adapt our setting to be consistent with the their setting as follows. Assume the height filtration function $\phi_0$ is determined by the vector $w$, i.e., $\phi_0(v)=\langle v, w \rangle$. Consider any point cloud $X\in \mathcal{M}$ and a neighborhood $U$ of $X$. The map $\phi_0$ can be viewed as parametrized by $U$ in the sense that for any $Y\in U$, define $\phi_0(v; Y) = \langle \omega_{XY}(v), w \rangle, \forall v\in\operatorname{Cl}_0(X,R)$, where $\omega_{XY}$ is the optimal transport plan from $X$ to $Y$ (see Appendix~\ref{app:mnfd-point-cloud}). Intuitively, as moving a vertex $v\in \operatorname{Cl}_0(X,R)$ would result in a change in $\phi_0(v)$, the coordinates of all the vertices in $\operatorname{Cl}_0(X,R)$ can be viewed as a parametrization for the map $\phi_0$. 

For Vietoris-Rips filtration, we will need the concept of \emph{general position} for point clouds. Specifically, a point cloud $X=\{x_1, \ldots, x_N\}$ is said to be in \emph{general position} if $ \forall\{i, j\} \neq\{k, l\} \text {, where } i, j, k, l \in\{1, \ldots, n\},\left\|x_i-x_j\right\|_2 \neq\left\|x_k-x_l\right\|_2$. Following \citet[Corollary 5.9,][]{LOT22}, we claim that the Rips filtration would define a strict order for point clouds in general position. As the subspace of such point clouds is dense in $\mathcal{M}$, the PH encoding is generically differentiable. 
We point out that \citet[Corollary~5.9,][]{LOT22} consider the domain of the barcode valued map as the vector space $\mathbb{R}^{D\times N}$. However, their results can be extended to the above claim by noticing that the manifold of point clouds in our setting is locally homeomorphic to $\mathbb{R}^{D\times N}$ (see Appendix~\ref{app:mnfd-structure}).

\paragraph{Computation for the partial derivatives} 
We now derive the partial derivatives for the PH encoding $f: \mathcal{M}\rightarrow \mathcal{N} \subset \mathbb{R}^{P\times P}$. First, assume $f$ is differentiable at $X\in\mathcal{M}$. 
As discussed in Appendix~\ref{app:mnfd-structure}, there exists a neighborhood $U$ of $X$ and a coordinate map $\xi: U \rightarrow \mathbb{R}^{D\times N}$. Specifically, assume $\tau: [N]\rightarrow X$ is the total order that characterizes $\xi(X)$, i.e., $\xi(X) = [\tau(1), \tau(2),\ldots,\tau(N)]$ (see Appendix~\ref{app:mnfd-point-cloud}).
Recall that when constructing PIs from transformed PDs, one needs to choose a smooth kernel $g_{(b, l)}$, e.g., Gaussian kernel, and a smooth weighting function $\alpha(b, l)$, e.g., linear function of $l$. The persistence surface $\psi$ would then be defined as:
$$
\psi(x, y)=\sum_{(b, l) \in \eta(\mathrm{PD})} \alpha(b, l) g_{(b, l)}(x, y) .
$$
Then, one can obtain a persistence image in the form of a $P \times P$ matrix whose $(i, j)$-th entry is: 
$$
\mathrm{PI}_{i j}=\int_{\text {pixel }_{i j}} \psi(x, y) .
$$
Here $\cup_{i j}$ pixel $_{i j}$ forms a grid subdivision of a subdomain of $\psi$. In our experiments, we consider a rectangle subdomain $\left[x_{\min }, x_{\max }\right] \times\left[y_{\min }, y_{\max }\right]$ and evenly-spaced rectangle pixels. Specifically, consider grid points $x_k=x_{\min }+k \Delta x$ for $k=0,1, \ldots, P$, where $\Delta x=\frac{x_{\max }-x_{\min }}{P}$, and $y_l=y_{\min }+l \Delta y$ for $l=0,1, \ldots, P$, where $\Delta y=\frac{y_{\max }-y_{\min }}{P}$. 
Set $\operatorname{pixel}_{i j}=\left[x_i, x_{i+1}\right] \times\left[y_j, y_{j+1}\right]$ for $i=0,1, \ldots, P-1$ and $j=0,1, \ldots, P-1$. We then estimate the integral value by
$$
\mathrm{PI}_{i j}=s \cdot \psi\left(x_i, y_i\right), 
$$
where $s$ is the area of $\operatorname{pixel}_{i j}$, i.e., $s=\Delta x \Delta y$. For any $(i,j)\in [P]\times[P]$, $k \in [N]$, the partial derivative $\frac{\partial\text{PI}_{ij}}{\partial \tau(k)}$ can be formulated as 
$$
\begin{aligned}
\frac{\partial\text{PI}_{ij}}{\partial \tau(k)} 
&= \sum_{\phi(\sigma) \in \operatorname{PD}_k(\operatorname{Filt}(X))} \frac{\partial\text{PI}_{ij}}{\partial \phi(\sigma)} \frac{\partial\phi(\sigma)}{\partial \tau(k)} . 
\end{aligned}
$$
We point out that $\operatorname{PD}_k(\operatorname{Filt}(X))$ is a multiset consisting of bounded and infinite persistence intervals and the notation $\phi(\sigma) \in \operatorname{PD}_k(\operatorname{Filt}(X))$ is used to denote the summation is taken over all the filtration values that appear in these intervals. 

For point clouds where the filtration only defines a non strict total order, $\frac{\partial\phi(\sigma)}{\partial \tau(k)}$ should be viewed as a subderivative as we discussed above and one needs to select an element in the subderivative. 
For example, consider a point cloud $X_0$ consisting of points evenly-spaced on $\mathbb{S}^1$ (see Figure~\ref{fig:eigen-stability}) and the PH encoding that outputs the 1-dimensional PI on the Rips filtration. Notice the distance between any two adjacent points takes the same value. Therefore, the \emph{barcode template} is not unique. In the first row of Figure~\ref{fig:eigen-stability}, we show the top eigenvector and the Jacobian matrix at $X_0$ and other two point clouds near $X_0$. As shown in the figure, both eigenvectors and Jacobian matrices are indeed unstable.  
We point out, however, if we perturb $X_0$ even slightly, then we can expect the information contained in the Jacobian to be reliable, because the set of points clouds for which the encoding is differentiable is dense in $\mathcal{M}$. As shown in the second row of Figure~\ref{fig:eigen-stability}, the eigenvector and the Jacobian matrix become stable near point cloud $X_1$, which is obtained by adding a slight noise on $X_0$. A similar result can be observed for a point cloud whose points are uniformly sampled from $\mathbb{S}^1$ (see the third row of Figure~\ref{fig:eigen-stability}). 
We also note that in practice, due to numerical approximation errors, it might still be the case that, e.g., even for points in generic position, one might have that two edges in the Vietoris-Rips complex might have the same filtration value. This is an issue that needs to be considered in applications, and that could inform specific choices in computations.

\begin{figure}
    \centering
    \includegraphics[width=1.\textwidth]{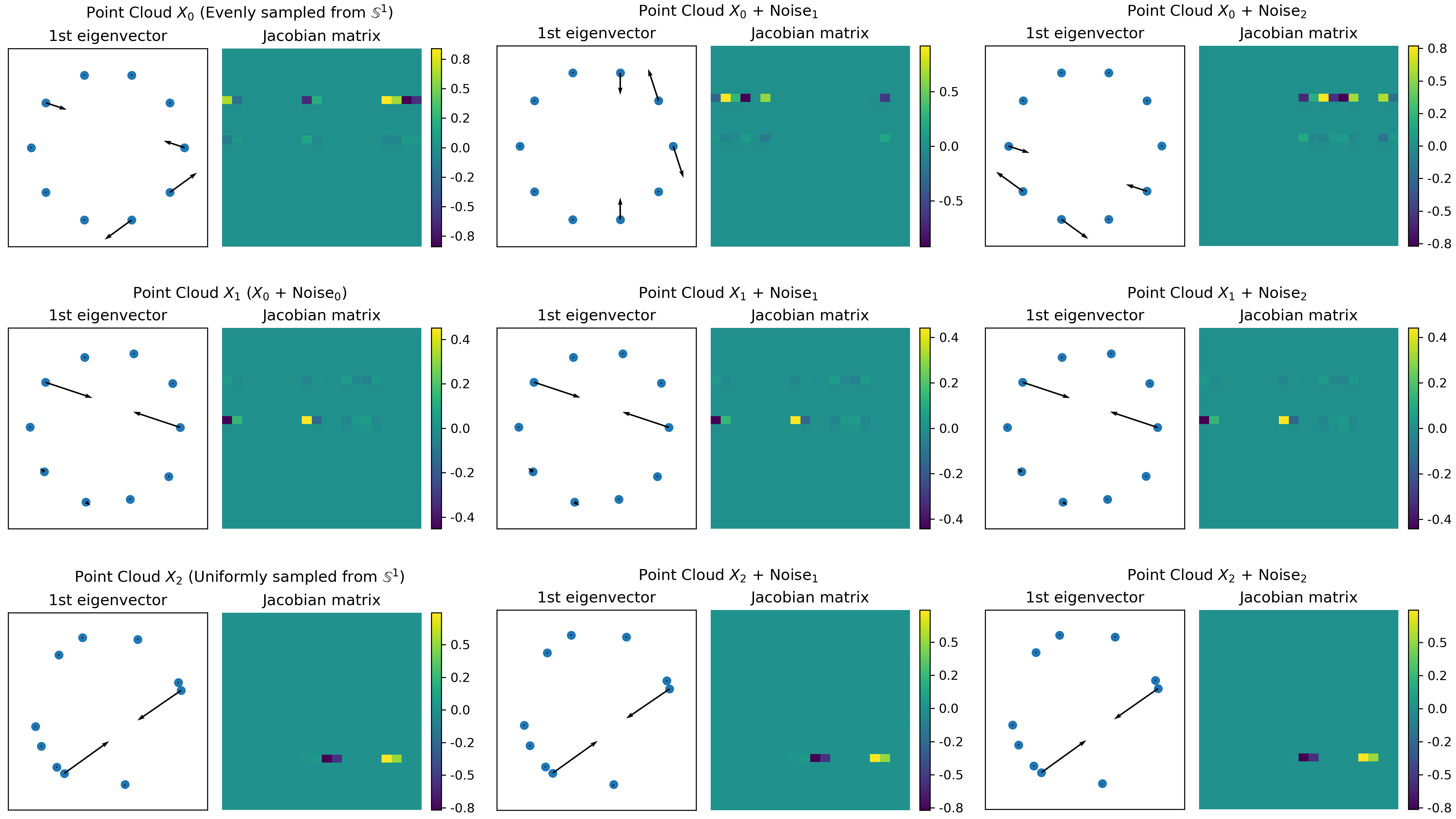}
    \caption{Stability of the Jacobian at three point clouds: 
    $X_0$, comprising points evenly spaced on $\mathbb{S}^1$ (first row); 
    $X_1$, obtained by adding slight noise to $X_0$ (second row); 
    and $X_2$, consisting of points uniformly sampled from $\mathbb{S}^1$ (third row). We consider the PH encoding that outputs the 1-dimensional PI on the Vietoris-Rips filtration.
    In the first two columns in each row, we show the top eigenvector of the Jacobian and the Jacobian matrix for the original point cloud. In the next two sets of columns we show the same information for the point cloud obtained by adding Noise$_1$ resp.\ Noise$_2$ to the original point cloud. Noise$_1$ and Noise$_2$ are consistent across rows. 
    Note that $X_0$ is not sampled randomly from the circle, but in a deterministic manner which ensures the points are equidistant, so that the pair of simplices that yields the birth and death of the loop is not unique. 
    This causes the non-differentiability of the encoding map and the instability of the Jacobian information. However,  the Jacobian is stable at $X_1$ and $X_2$, which illustrates that the Jacobian is generically stable in the input space. 
    }
    \label{fig:eigen-stability}
\end{figure}

Meanwhile, we point out that while the information of Jacobian on specific point clouds can be unstable, the computational results in this work are still reliable since these quantities, e.g., average inner-product between perturbation vector fields and the top eigenvectors (Section~\ref{sec:identify-alignment}) and average pull-back norm (Section~\ref{sec:identify-pbnorm}), are computed over diverse point clouds within a dataset and those undesirable point clouds are very sparse in the input space as we have previously shown. 
To demonstrate this clearer, we experimentally investigate the relation between average pull-back norm and the number of point clouds considered in the computation (see Figure~\ref{fig:jac-stability}). We obeserve that, on both RPF and brain artery tree data set, the pull-back norm quickly converges to a stable value once a certain amount of point clouds are included in the computation. This demonstrates the stability and reliability of the insights obtained via our methodology. 

\begin{figure}
    \centering
    \includegraphics[width=.9\textwidth]{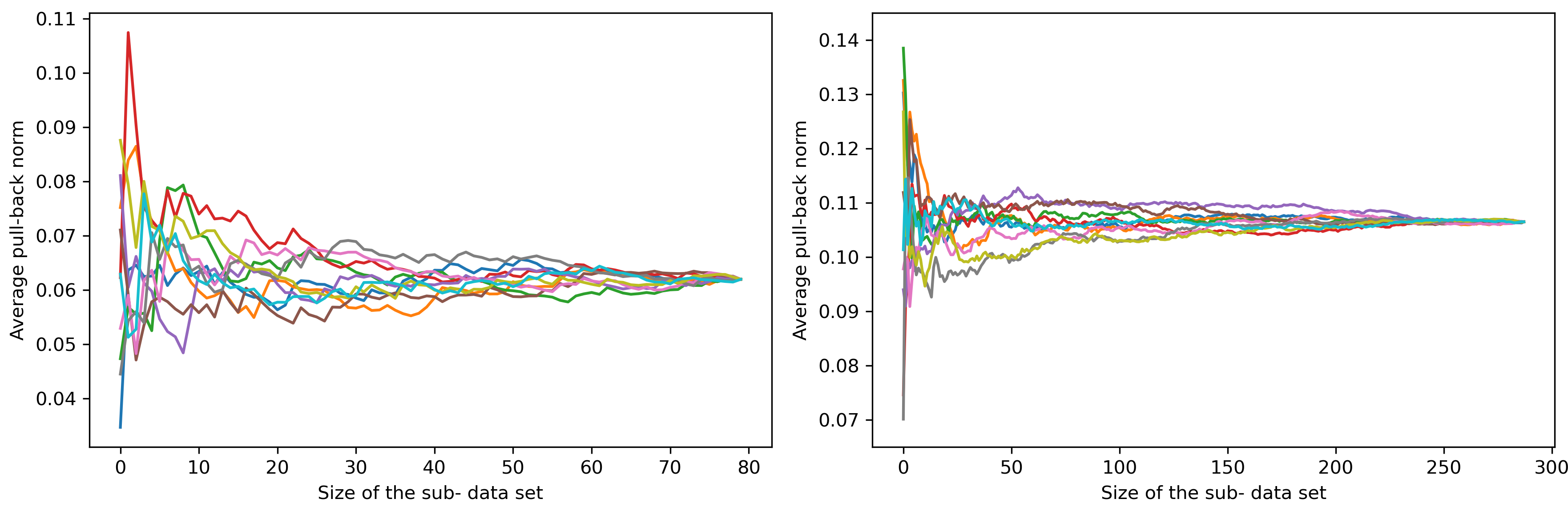}
    \caption{Evaluation of the average pull-back norm of the noising perturbation vector field with respect to the 1-dimensional PH encoding on Rips filtration over subsets of the RPF data set (left) and the average pull-back norm of the gradient vector field of the \emph{sex} feature with respect to the same PH encoding over subsets of the brain artery tree data set (right). In both cases, we begin from one random point cloud and iteratively include one more random point cloud until the entire dataset is exhausted. We record the average pull-back norm as the size of the sub- data set increases. This process is repeated 10 times. We observe that as more point clouds are included, the average pull-back norm converges to a stable value. }
    \label{fig:jac-stability}
\end{figure}

To compute the Jacobian in our experiments, we use the Gudhi library \citep{gudhi:urm} version $3.8.0$ and Tensorflow version $2.12.0$. Specifically, we use Gudhi library to collect PDs, especially \texttt{Gudhi.tensorflow.RipsLayer} class to collect PDs with respect to the Rips filtration. We then manually compute PIs as described above using Tensorflow. Finally, we collect the Jacobian for the whole pipeline with \texttt{tensorflow.GradientTape.jacobian} function.

\section{Visualizing the Jacobian of the encoding over the data manifold} 
\label{app:visualize} 

In this section we visualize the singular value decomposition (SVD) of Jacobian on a toy data set, aiming at providing further intuition for the eigenvectors of Jacobian mappings. We consider point clouds that are uniformly sampled from axis-aligned ellipses of width $w$ and height $h$. These are illustrated in the left panel of Figure~\ref{fig:data_toy}. To visualize the Jacobian at an input point cloud $X$, we 
plot the pull-back unit ball around $X$ in the data manifold,  
$$B^*(X,1) = \{v\in T_X\mathcal{M}: \|v\|_f = 1 \}. $$
This corresponds to the preimage of a unit ball in $T_{f(X)}\mathcal{N}$. Notably, the equation $1 = \|v\|_f = \sum \lambda_i \innerprod{v}{q_i}^2$ indicates that the pull-back unit ball forms an $m$-dimensional ellipsoid with semi-axes $q_1, q_2, \ldots , q_m$, and the lengths of these semi-axes are given by $\frac{1}{\sqrt{\lambda_1}},\frac{1}{\sqrt{\lambda_2}},\ldots ,\frac{1}{\sqrt{\lambda_m}}$. 

\begin{figure}[ht]
\centering
\includegraphics[width=.6\textwidth]{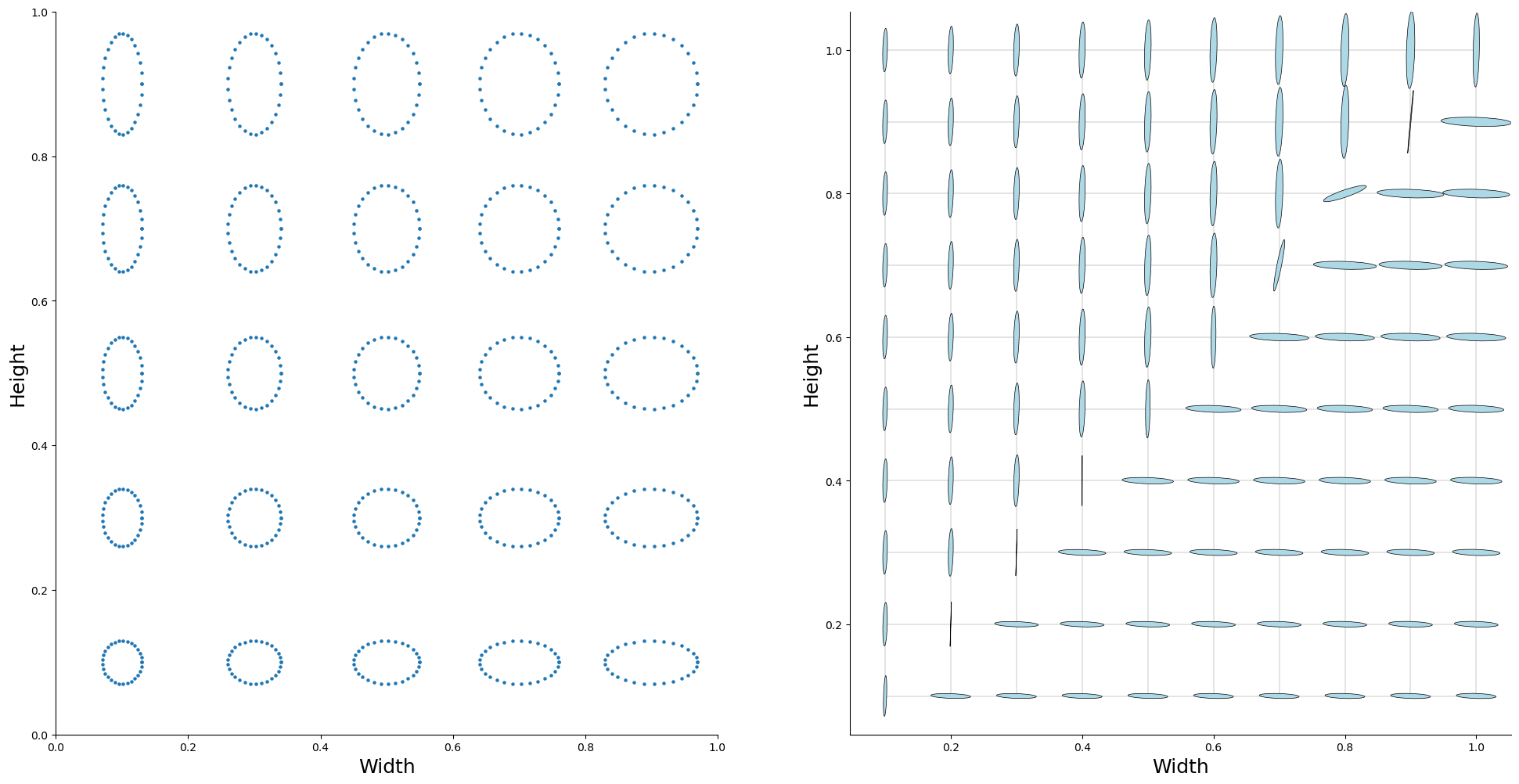} 
\caption{Visualization of the pull-back metric on a toy data set. Left: The space of point clouds sampled from ellipses of width $w$ and height $h.$ Right: The pull-back unit ball at different locations on the space of ellipses. Shorter semi-axes of the pull-back unit ball correspond to larger eigenvalues of the Jacobian.}
\label{fig:data_toy}
\end{figure}

In the right panel of Figure~\ref{fig:data_toy} we plot the pull-back unit balls for the encoding $f$ given by the 1-dimensional PI with respect to the Vietoris-Rips filtration. The plot reveals that the eigenvectors of the encoding associated with the larger eigenvalue consistently align with the direction of increasing $\min\{w,h\}$. 
This alignment is in accordance with what we would expect the Vietoris-Rips filtration to capture on this specific data set, since the death value depends on the radius of the inner circumcircle of a hole.
Consequently, variations in the length of the major axis have minimal impact on PI. Conversely, altering the length of the minor axis directly affects the death parameter and changes the PI.

\section{Point saliency maps for PH encodings} 
\label{app:point-saliency} 

\begin{figure}[b]
\centering
\includegraphics[width=.75\textwidth]{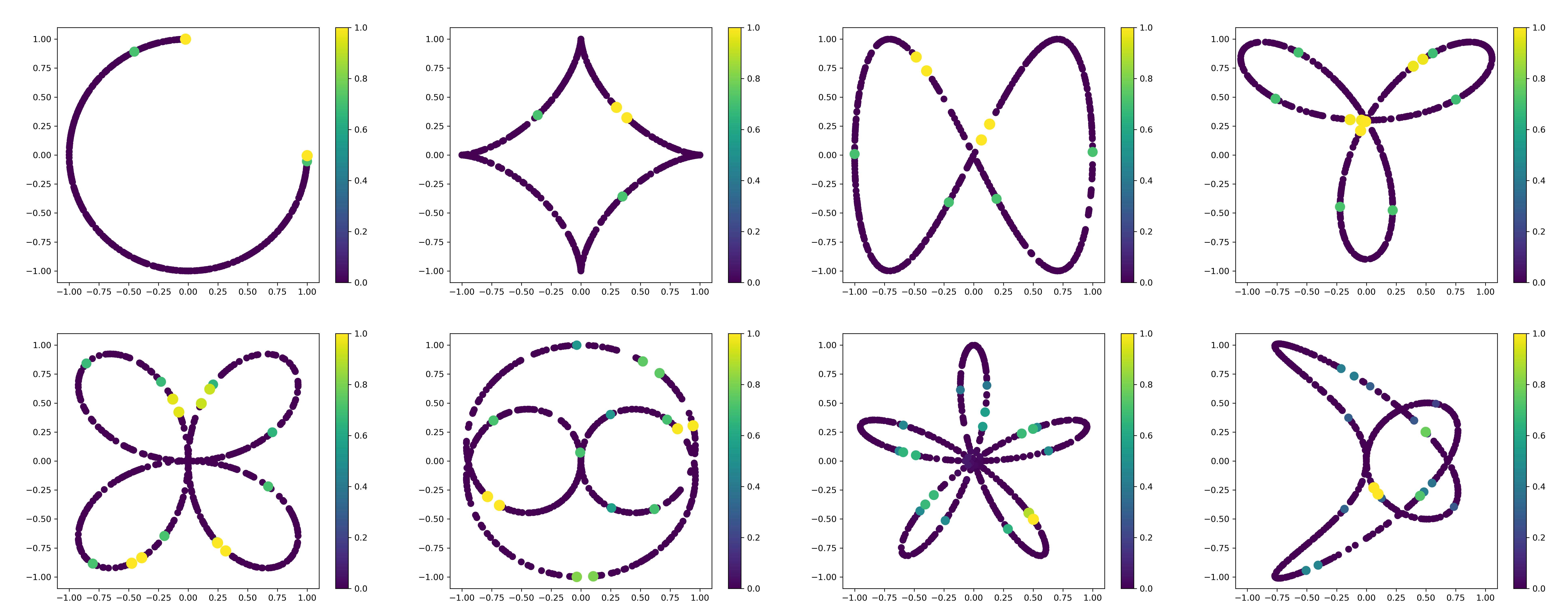}
\caption{Point saliency score with respect to 1-dimensional PIs on Vietoris-Rips filtration for synthetic $2$-dimensional point clouds.}
\label{fig:point-saliency}
\end{figure}

In the context of interpretability, point saliency maps are commonly used tools to explain the decisions made by trained models  \citep{MONTAVON20181,zheng2019pointcloud}. These maps assign importance scores to each point in an input point cloud (or to each pixel in an input image), 
indicating their significance in relation to the model's prediction. In our study, we employ a similar strategy to visualize the importance of each point in an input point cloud with respect to the PH encoding. 

Given a point cloud $X$ and an encoding map $f$, we define the encoding saliency score for each point $x_i\in X$ as 
$$
s_{f}(x_i) = \left\| \frac{\partial f}{\partial x_i} \right\|_F ,
$$
where $\|\cdot\|_F$ denotes the Frobenius norm, which is the generalized Euclidean norm for matrices. Here, $\frac{\partial f}{\partial x}$ represents the Jacobian matrix of the encoding, which is a matrix of format $P^2 \times D$ obtained by flattening the PI into a vector in $\mathbb{R}^{P^2}$ and taking the partial derivative of each output pixel with respect to each coordinate of each point in the input point cloud. Similarly, $\frac{\partial f}{\partial x_i}$ is the vector of partial derivatives of the encoding with respect to the coordinates of the $i$-th point $x_i$ in the point cloud. This score quantifies the sensitivity of the representation to variations on the coordinates of $x$. 

In Figure~\ref{fig:point-saliency}, we plot the point saliency score with respect to the Rips used in Section~\ref{sec:select} for point clouds sampled from eight synthetic curves in $\mathbb{R}^2$. For each point cloud we highlight individual points according to their saliency scores. We observe that endpoints and points related to the inner circumcycle are often highlighted. These points correspond to the simplices that create/destroy certain homology classes in the filtration. For instance, in the curved rhombus in the first-row, second-column panel, the endpoints (yellow) on the upper right side are highlighted. Note that the edge between these two endpoints will connect the gap and create a homology class. Therefore, variations on these two points can significantly change the birth parameter of the corresponding homology class. Meanwhile, the two points (green) on the upper left and lower right sides are highlighted. They are related to the inner circumcycle of the rhombus, and are vertices of the triangle that destroys the homology class. Therefore variations on these two points can change the death parameter of the homology class significantly.

Before concluding this section, it's important to note that the saliency scores shown in Figure~\ref{fig:point-saliency} are heavily influenced by the sampling process. For example, many of the highlighted endpoints shown in Figure~\ref{fig:point-saliency} are a result of sparse sampling. Consequently, the resulting saliency scores might be more indicative of the sampling rather than the underlying shape itself. To better comprehend the shape, one potential approach is to compute the average saliency score across multiple different samplings.

\section{Details on the experiments}  
\label{app:details-experiments}

\subsection{Reproducibility}
The data and code developed for this research are made available at \url{https://github.com/shuangliang15/pullback-geometry-persistent-homology}.

\subsection{Jacobian normalization}
\label{app:details-experiments-jacobian-normalization}
Throughout our work, we use pull-back norm to quantify the sensitivity of the encoding method to data variations. One needs to be careful when comparing the pull-back norms induced by different encodings, since encodings may have different scaling levels. For example, consider Euclidean data space $\mathcal{M}=\mathbb{R}^2$, and two encoding mappings $f_1 = x + 0.1y$, $f_2 = x + 10y$. 
The pull-back norms of a tangent vector $v=[1,0]^T$ with respect to $f_1$ and $f_2$ are 
\begin{align*}
    \|v\|_{f_1} = \| J^{f_1} v\| &= \left\|\begin{pmatrix} 1 & 0\\ 0 & 0.1\end{pmatrix} \cdot [1,0]^T \right\| = 1,\\
    \|v\|_{f_2} = \| J^{f_2} v\| &= \left\|\begin{pmatrix} 1 & 0\\ 0 & 10\end{pmatrix} \cdot [1,0]^T \right\| = 1. 
\end{align*}
In terms of ``absolute sensitivity'', $f_1$ and $f_2$ has the same level of sensitivity to variation $v$. More specifically, when variation $v$ is applied to a data point $X$, both $f_1(X)$ and $f_2(X)$ would change with distance approximately $1$ in the representation space. However, in terms of ``relative sensitivity'', $f_1$ is more sensitive to $v$. The reason is that for $f_1$, the vector $v$ is the eigenvector of the Jacobian with the largest eigenvalue; while for $f_2$, $v$ is the eigenvector with the smallest eigenvalue. Equivalently, for $f_1$, $v$ has the largest pull-back norm among all tangent vectors with the same norm as $v$, whereas for $f_2$, $v$ has the smallest pull-back norm. 

In Section~\ref{sec:identify-pbnorm}, our goal is to study and compare the \emph{focus} of different encodings. In Section~\ref{sec:select-pbnorm} we search for the encoding whose primary \emph{focus} is on the data variations of interest. Hence, in both sections we remove the scaling factor by considering the normalized Jacobian. Specifically, we divide the Jacobian matrix by its largest singular value, $\Tilde{J}^f = J^f/\lambda_1^f$. For vector fields $V$ we consider the normalized average pull-back norm: 
$$
\frac{1}{|\mathcal{D} |}\sum_{X\in \mathcal{D}} \| \Tilde{J}^f \cdot V(X) \|. 
$$

Returning to the previous example, the normalized pull-back norm for vector $v$ with respect to $f_1$ and $f_2$ are $\frac{\|J^{f_1}v\|}{\lambda_1^{f_1}}=\frac{1}{1}=1$ and $\frac{\|J^{f_2}v\|}{\lambda_1^{f_2}}=\frac{1}{10}=0.1$, respectively.

\subsection{Identifying what is recognized}
\label{app:sec:identify} 

We provide details for the experiments in Section~\ref{sec:identify}. 

\paragraph{Radial Frequency Pattern data set} 
Figure \ref{fig:rfp-data} shows some examples in the Radial Frequency Patterns (RFP) data set $\mathcal{D}_{\text{RFP}}$.

\begin{figure}[ht]
    \centering
    \includegraphics[width=.9\textwidth]{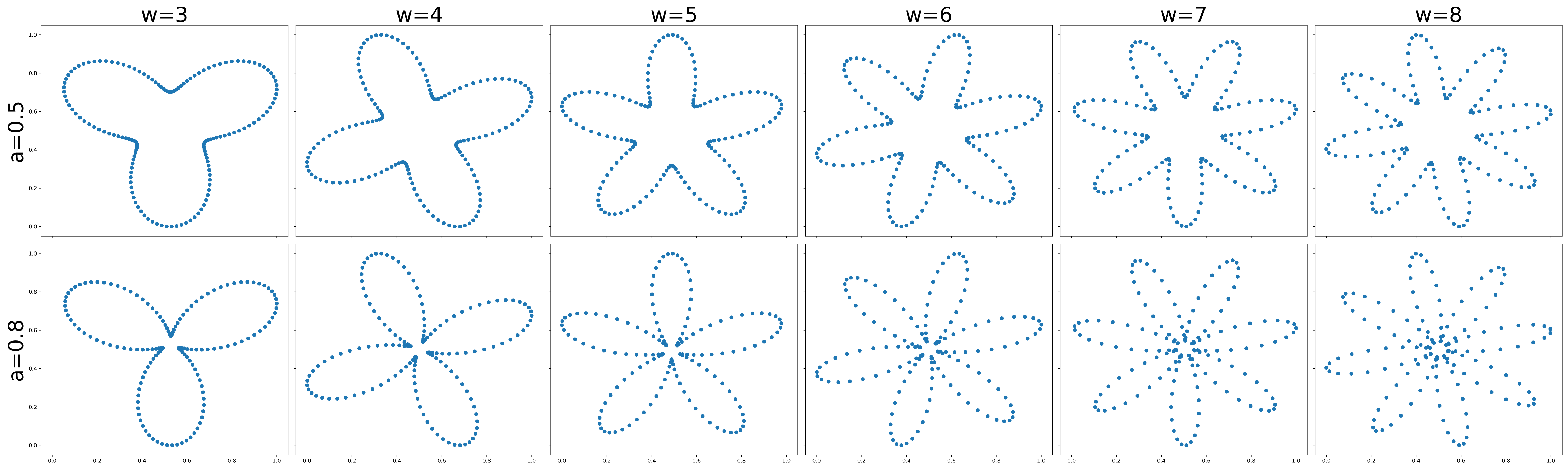}
    \caption{Visualization of the Radial Frequency Pattern (RFP) data.}
    \label{fig:rfp-data}
\end{figure}

\paragraph{PH parameters} For the Vietoris-Rips filtration, we set \textit{maximal\_edge\_length} as $1$. For the DTM filtration, we set \textit{maximal\_edge\_length} as $0.5$ and parameter $m$ as $0.02$. For the Height filtration, we set \textit{maximal\_edge\_length} as $0.1$.
We note that we need to set the maximum edge length to a small value to be able to capture topological features of interest (for instance, one wants to avoid connecting different outer regions of petals); alternatively, one could use geodesic distances or cubical complexes, see also \cite[Fig.20]{turkes2022on} for a detailed discussion. Notice there's no need to set a small maximum edge length for Rips and DTM filtrations since the filtration value of an edge in Rips and DTM filtrations takes the distance between its two vertices into account. In contrast, in Height filtration the filtration value of an edge is defined by the maximal filtration value of its vertices, which implies any two points will be immediately connected by an edge between them, if exists, after they appear in the filtration. For the construction of PI, we set the resolution $P$ as $20$, variance $\gamma^2$ of the Gaussian kernel as $10^{-4}$, and the range of the image as $[0,1]\times [0,1]$.  The weighting function is set as $\alpha(b,l)=l$. The implementation utilized Tensorflow version $2.12.0$ and Gudhi \citep{gudhi:urm} version $3.8.0$. 

\paragraph{Visualization of the effects of perturbation on PH}

We visualizes the effects of \textit{shearing} perturbation and \textit{convex} perturbation on the persistent homology with respect to Rips filtration (see the upper panel in Figure~\ref{fig:perturb-filtration}) and Height filtration (see the lower panel in Figure~\ref{fig:perturb-filtration}). We omit plots associated with DTM filtration as they closely resemble those associated with Rips filtration. Please note that while these visualizations provide insights into the effects of perturbations on PH, they are highly dependent on the specific point clouds under consideration. In fact, the effects of perturbations on PH can vary significantly across different point clouds.

As shown in the upper penal in Figure~\ref{fig:perturb-filtration}, the \textit{shearing} changes the sparsity of points in the point clouds and hence changes the birth values ($x$ coordinates) of certain points in PD with respect to Rips filtration. Meanwhile, \textit{shearing} changes the size of petals and hence changes the death values ($y$ coordinates) of certain PD points with respect to Rips filtration. On the other hand, \textit{convex} has the effect of ``opening up'' the central region of the point clouds. Consequently, some loops appear later in the filtration, notably the five loops that already exist in the second column of the first row but do not show up in the second column of the third row. Consequently, \textit{convex} also induces changes in the birth values of PD points associated with these loops. We note that, in comparison to \textit{shearing}, the \textit{convex} perturbation has a more significant impact on PI. This is consistent with the findings illustrated in Figure~\ref{fig:sense-compare}, where Rips is more sensitive to \textit{convex} compared to \textit{shearing}.

In the lower panel of Figure~\ref{fig:perturb-filtration}, we can observe distinct effects of these two perturbations on the PH associated with the Height filtration. The \textit{shearing} perturbation directly changes the $x$ coordinates of the points in the original points clouds, which correspond to their filtration values, and consequently changes the PI noticeably. On the other hand, the \textit{convex} perturbation changes the PH in a more significant way. Specifically, certain small loops that initially appeared in the filtration (as seen in the first row of the lower panel in Figure~\ref{fig:perturb-filtration}) no longer appear in the \emph{entire} filtration after perturbation (as seen in the third row of the lower panel in Figure~\ref{fig:perturb-filtration}). This observation arises from the small value we assigned to the \textit{maximal\_edge\_length} parameter for Height filtration. In fact, for some point clouds, the central points originally have a distance smaller than \textit{maximal\_edge\_length}. Then \textit{convex} perturbation can pull these central points apart, causing their distance to exceed the \textit{maximal\_edge\_length} threshold. Consequently, edges connecting these central points vanish from the filtration, resulting in the disappearance of certain small loops. In such cases, as shown in Figure~\ref{fig:perturb-filtration}, the \textit{convex} can significantly change the PI associated with Height filtration. However, for other point clouds where the central points initially have a distance greater than \textit{maximal\_edge\_length}, the corresponding PI may remain relatively unchanged after the \textit{convex} perturbation. This is again consistent with the results shown in Figure~\ref{fig:sense-compare}, where the pull-back norms of the \textit{convex} perturbation associated with Height filtration exhibit a significant range of variation.

\begin{figure}[t]
    \centering
    \includegraphics[width=.9\textwidth]{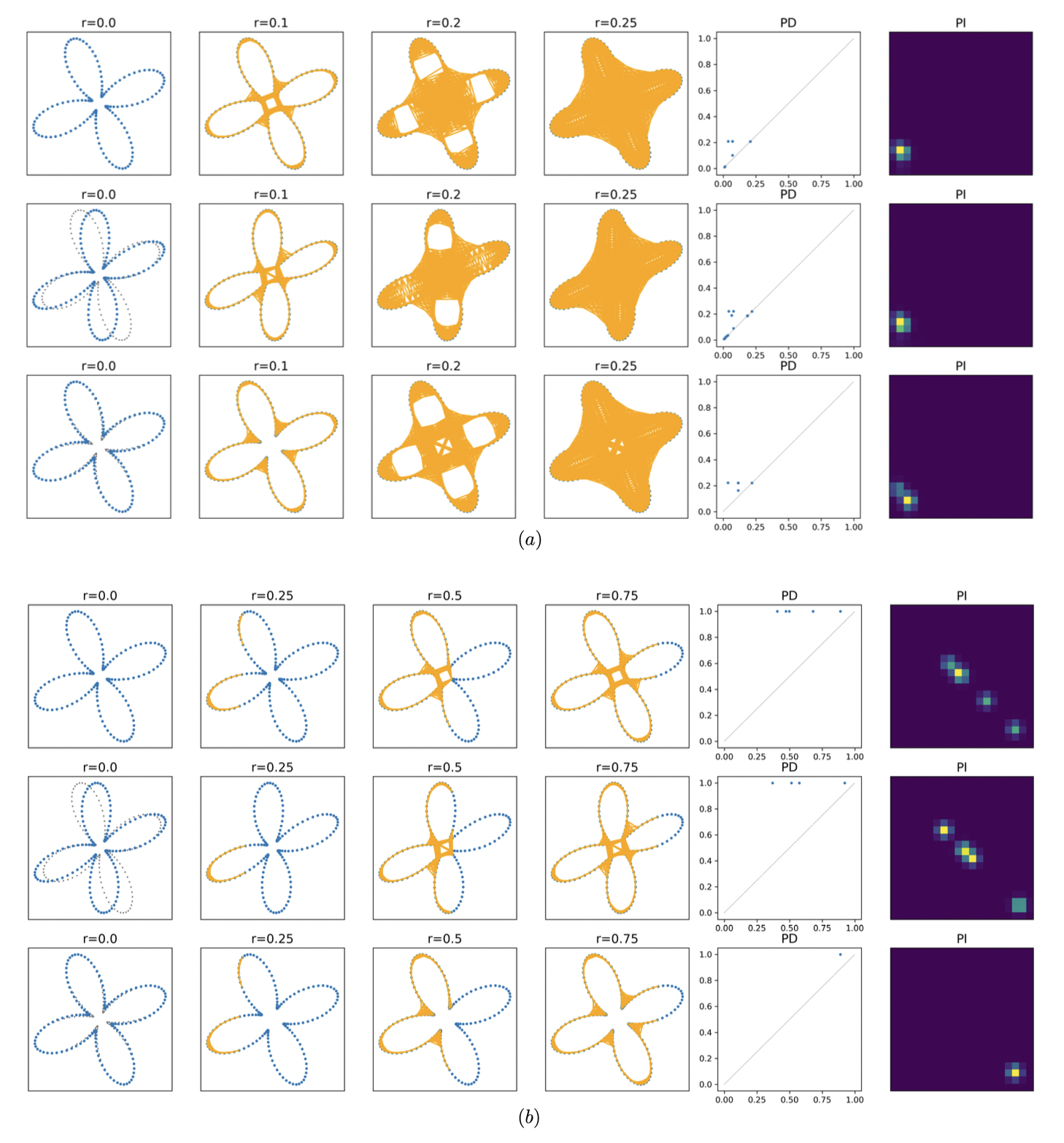}
    \caption{Figure $(a)$ visualizes the effects of different perturbations on persistent homology with respect to Rips filtration. The first four columns illustrate the simplicial complex $K_r$ in the Rips filtration with parameter $r$ ranging from $0.0$ to $0.25$. The fifth and sixth columns show the corresponding PDs and PIs, respectively. The first row is associated with the original point cloud; while the second and third rows are associated with the point cloud perturbed by \textit{shearing} and \textit{convex}, respectively. Figure $(b)$ visualizes the effects of perturbations on persistent homology with respect to Height filtration, following the same row-column arrangement as in Figure $(a)$.}
    \label{fig:perturb-filtration}
\end{figure}

\paragraph{PointNet architecture and training details} 
In our experiments, we labeled each point cloud data $X\in \mathcal{D}_{\text{RFP}}$ with the number of petals, i.e.\ the parameter $w$ of the curve $\text{RFP}_{a,w}$ from which $X$ is sampled.  This produces $8$ classes in total and we train the PointNet model to classify each point cloud in the data set.  The PointNet model consists of a $1$-dimensional convolutional layer with $64$ filters and kernel size $1$, followed by batch normalization and rectified linear unit (ReLU) activation. Then global max pooling is applied to obtain a permutation-invariant representation. This is followed by two fully connected layers with $128$ and $64$ hidden units, respectively, with ReLU activation. The final output layer uses softmax activation to produce theprobability distribution over the output classes. We trained the model with a batch size of $32$ for $100$ epochs, using a learning rate of $0.001$. The optimization algorithm used was Adam, and the model was trained using the cross-entropy loss function. We note data augmentation techniques are used for training PointNet in literatures \citep[see, e.g.,][]{qi2017pointnet}. However, we do not augment the data during training, in order to ensure a fair comparison with other encodings. The implementation utilized Tensorflow version~$2.12.0$. 

\paragraph{Perturbation vector field estimation}
Let $\mathcal{D}=\{X_i\}_{i\in\mathcal{I}}$ be a finite data set of point clouds, and $\pi:\mathcal{M}\rightarrow \mathcal{M}$ a perturbation mapping defined on the data manifold. In the case where the data lies in Euclidean space, i.e. $\mathcal{M}=\mathbb{R}^m$, one can compute the perturbation vectors as following:
$$
V_\pi(X) = \pi(X)-X, \quad \forall X\in\mathcal{D}.
$$
However, in the case of general Riemannian data manifold, the subtraction between any 
two points on the manifold may not be well-defined.
To address this, we control the perturbation mapping such that for every $X\in\mathcal{D}$ the perturbed point cloud $\pi(X)$ lies in a small neighborhood of $X$ and calculate the perturbation vector via the local coordinate system.
Specifically, as shown in Appendix~\ref{app:mnfd-structure}, for each point cloud $X\in\mathcal{D}$, one can find a neighborhood $X \in U_X\subset \mathcal{M}$ and an injective isometry $\xi_X: U_X \rightarrow \mathbb{R}^m$.
We control the perturbation mapping sends every point $X$ to $U_X$, i.e., 
$$
\pi(X) \in U_X, \quad \forall X\in\mathcal{D}.
$$
Then the perturbation vectors can computed with the subtraction on the Euclidean domain $\xi_X(U_X)$ as follows: 
$$
V_\pi(X) = \xi_X(\pi(X))-\xi_X(X), \quad \forall X\in\mathcal{D}.
$$

\paragraph{Unnormalized pull-back norms}

\begin{figure}[t]
    \centering
    \includegraphics[width=.65\textwidth]{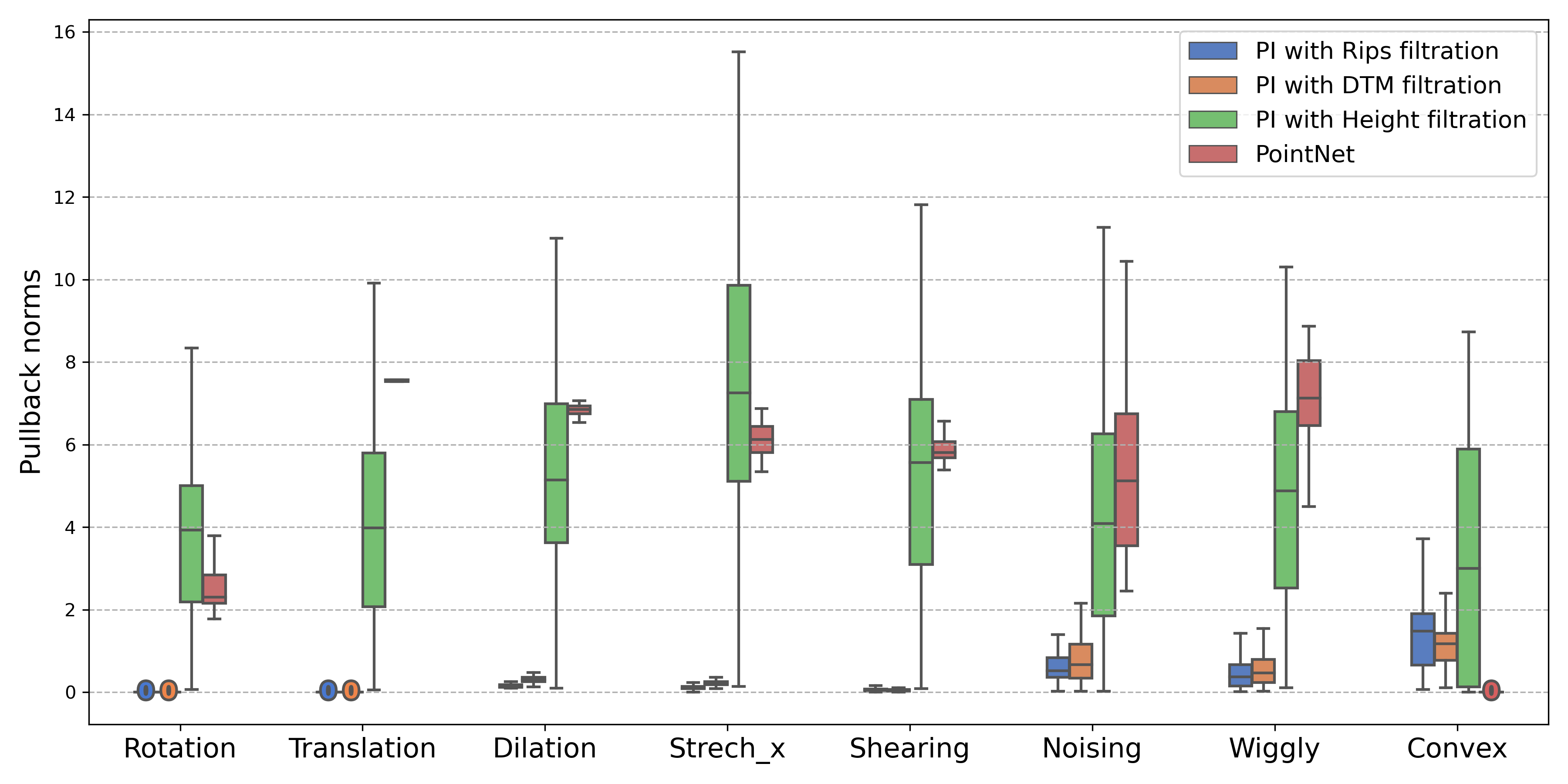}
    \caption{Unnormalized average pull-back norm of different perturbation vector fields with respect different encodings.}
    \label{fig:absolute-sensitivity}
\end{figure}

As discussed in Appendix~\ref{app:details-experiments-jacobian-normalization}, we consider the normalized average pull-back norms for perturbation tangent vector fields in Section~\ref{sec:identify-pbnorm}.
We present the results of unnormalized average pull-back norms in Figure~\ref{fig:absolute-sensitivity}. Note that one could conclude from Figure~\ref{fig:absolute-sensitivity} that, on average, Height is more sensitive to \textit{convex} than Rips in terms of ``absolute sensitivity''. However, we reach the opposite conclusion from Figure~\ref{fig:sense-compare} in terms of ``relative sensitivity''. 

\subsection{Selecting hyperparameters} 
\label{app:sec:select} 

We provide further details on Section~\ref{sec:select}. 

\paragraph{PH parameters} In this section, we focus on PH encoding constructed on Vietoris-Rips filtration. We set the parameter \textit{maximal\_edge\_legnth} as $0.25$, and the range of PI as $[0.0,0.25]\times [0.0,0.25].$
The implementation utilized Gudhi \citep{gudhi:urm} version $3.8.0$ and Tensorflow version $2.12.0$. 

\paragraph{Gradient vector field estimation}

Let $\mathcal{D}=\{X_i\}_{i\in\mathcal{I}}$ be a finite data set of point clouds, and $\rho(X_i)$ be the corresponding feature values of point clouds in $\mathcal{D}$.
In the case when the data lies in Euclidean space, we can estimate the gradient vectors with the finite difference method (FDM) as follows: 
$$
\overline{\nabla\rho(X)}=X^\prime - X, 
\quad 
X^\prime = \underset{Y \in \mathcal{D}}{\mathrm{argmax}} \frac{\mid\rho(Y)-\rho(X)\mid}{d_E(X, X^\prime)}, 
\quad 
\forall X\in\mathcal{D}. 
$$
For binary categorical feature, i.e.\ $\rho(X)\in\{0,1\}$, the formula can be modified as 
$$
\overline{\nabla\rho(X)}=X^\prime - X, 
\quad 
X^\prime = \underset{Y \in \{Z\in\mathcal{D}:\rho(Z)\neq \rho(X)\}}{\mathrm{argmin}} d_E(X, Y),
\quad \forall X\in\mathcal{D}.
$$

\begin{figure}[t]
\centering
\includegraphics[width=.75\textwidth]{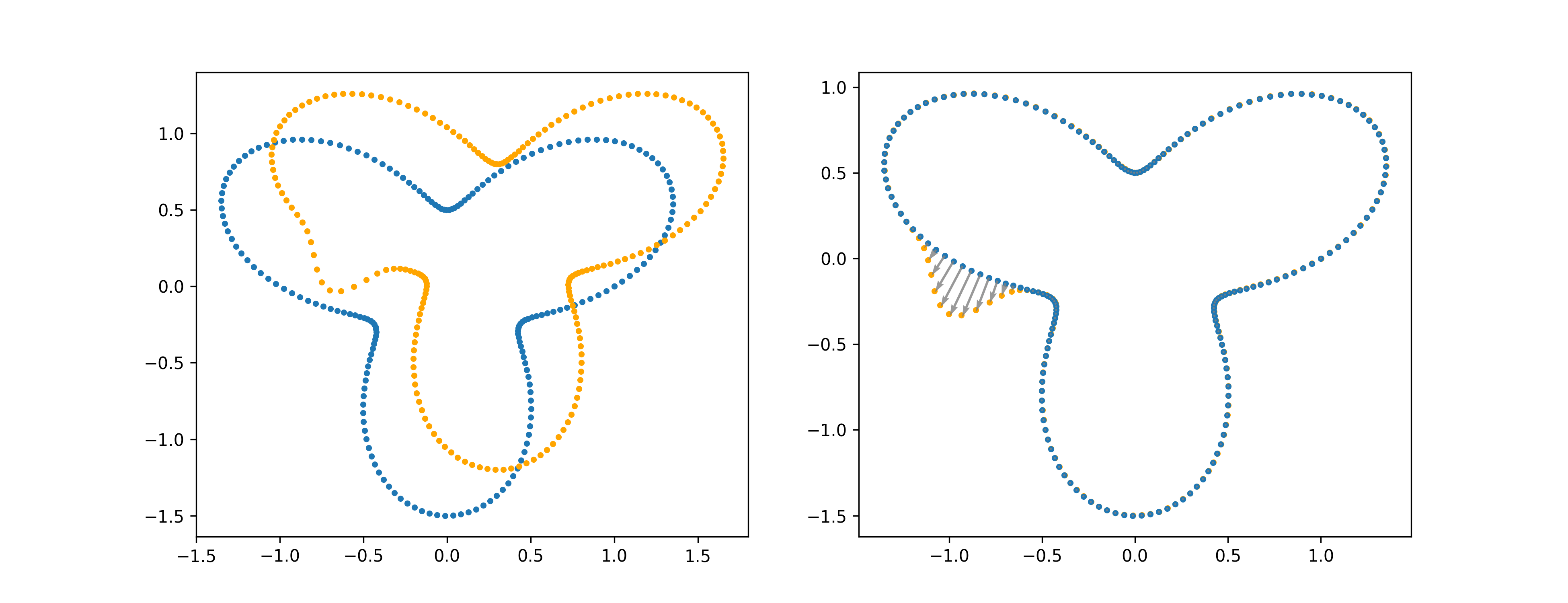}
\caption{Left: shown are two point clouds in the plane. Right: the orange point cloud is transformed by iterated closest point algorithm and the grey vectors represent the difference vector between blue point clouds and the transformed orange point cloud.}
\label{fig:icp}
\end{figure}

However, in the case of general Riemannian data manifold, the subtraction between any two points on $\mathcal{M}$ may not be well-defined.
To address this, when estimating gradient vector located at $X$, we send other point clouds in the data set to a neighborhood of $X$ via transformation that preserves the feature value.
In our experiment, we use Euclidean transformation since the \emph{sex} feature is irrelevant to the position or orientation of the brain artery trees.
Then FDM can be applied to the transformed data set, via the coordinate system on that neighborhood.

Specifically, we utilize the iterated closest point (ICP) method \citep{chen1992object,besl1992method}. Let $E(\mathbb{R}^D)$ be the collection of all Euclidean transformations in $\mathbb{R}^D$
$$
E(\mathbb{R}^D) = \{\iota:\mathbb{R}^D\rightarrow\mathbb{R}^D: d_E(x,y)=d_E(\iota(x),\iota(x)), \forall x,y\in \mathbb{R}^D \}.
$$
Note each $\iota$ can be naturally extended to a transformation on point clouds: $\iota(X) \triangleq \{\iota(x):x\in X\}$. Let $\zeta$ be a error function in the sense that $\zeta(X,Y)$ measures the ``difference'' between $X$ and $Y$.
Given two point clouds $X$ and $Y$, the ICP algorithm searches the Euclidean transformation that gives the minimal error value:
$$
\iota_{X,Y} = \underset{\iota \in E(\mathbb{R}^D)}{\mathrm{argmin}} \zeta(X, \iota(Y)).
$$
Here $\iota_{X,Y}$ is Euclidean transformation found by ICP algorithm for point clouds $X$ and $Y$. Define the ICP discrepancy between point clouds $X$ and $Y$ (not necessarily a distance) as
$$
d_{\text{ICP}}(X,Y) = d_W(X,\iota_{X,Y}(Y)),
$$
where $d_W$ is Wasserstein distance between point clouds.
Let $U_X$ be a neighborhood of $X$ and $\xi_X$ a coordinate map $\xi_X: U_X \rightarrow \mathbb{R}^m$.
Estimate the gradient vector field for binary categorical feature $\rho$ as follows  (see an illustration in Figure~\ref{fig:icp}):
$$
\overline{\nabla\rho(X)}=\xi_X(\iota_{X,X^\prime}(X^\prime)) - \xi_X(X), \quad X^\prime = \underset{Y \in \{Z\in\mathcal{D}:\rho(Z)\neq \rho(X)\}}{\mathrm{argmin}} d_{\text{ICP}}(X, Y),\quad \forall X\in\mathcal{D} . 
$$
The implementation utilized python library Open3D \citep{Zhou2018} version $0.17.0$ and POT \citep{flamary2021pot} version $0.9.0$.

\paragraph{Downstream tasks and performance} 
In Section~\ref{sec:select}, we fed PIs produced by different parameter settings to logistic regression models to predict the \textit{sex} feature. Specifically, we normalize the PIs such that the pixel values range within $[0,1]$. We implemented logistic regression models using Scikit-learn \citep{scikit-learn} version $1.2.2$ with default hyperparameters. When evaluating the model, we use a $7$-fold cross validation. And for robust evaluation in Section~\ref{sec:select-correlation}, we add identically and independent distributed Gaussian noise with variance $10^{-2}$ to each coordinate of each point in input point clouds. 

\begin{figure}[t]
\centering
\includegraphics[width=.8\textwidth]{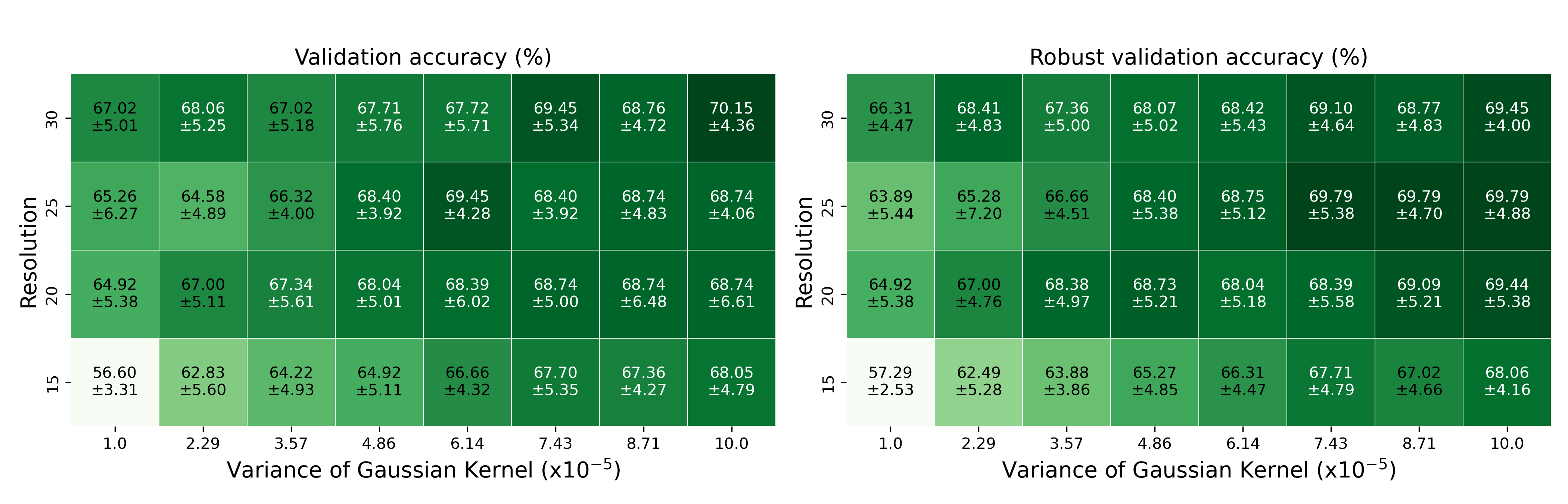}
\caption{Right: test accuracy of the convolutional neural network (CNN) trained with PIs produced by different parameter settings. Left: robust test accuracy of the convolutional neural network (CNN) trained with PIs produced by different parameter settings.}
\label{fig:cnn-performance}
\end{figure}

\begin{figure}[t]
\centering
\includegraphics[width=.9\textwidth]{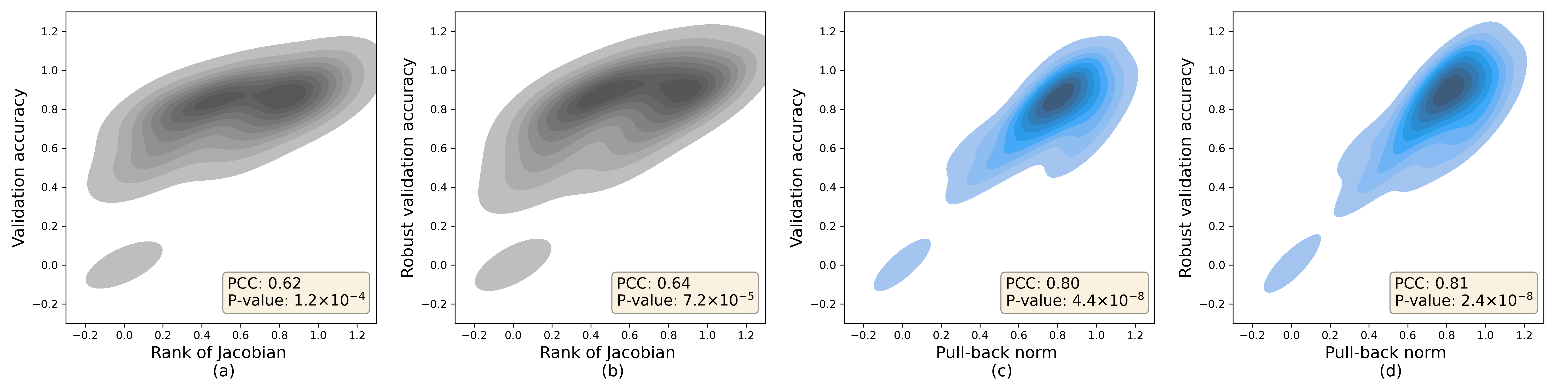}
\caption{Gaussian kernel density estimation of the joint distribution of four pairs of variables: (a) Jacobian rank vs. validation accuracy; (b) Jacobian rank vs. robust validation accuracy; (c) pull-back norms vs. validation accuracy; and (d) pull-back norms vs. robust validation accuracy, where the downstream models are chosen as convolutional neural networks.}
\label{fig:cnn-correlation}
\end{figure}

Here, we also investigate the effects of resolution and variance of Gaussian kernel on the downstream performance of convolutional neural network (CNN). The CNN model takes PIs as inputs, then begins with a convolutional layer with $32$ filters and a $3\times 3$ kernel, followed by a ReLU activation function. Then max pooling with a pool size of $2\times 2$ is applied. Subsequently, another convolutional layer with $64$ filters and a $3\times 3$ kernel is added, also followed by a ReLU activation and max pooling. The resulting outputs are then flattened and passed through a fully connected layer with $64$ neurons and ReLU activation. Finally, a single neuron with a sigmoid activation function is used for binary classification. To train the model, we employ Adam optimizer with the cross-entropy loss function. The implementation utilized Tensorflow version $2.12.0$. 

We represent the downstream performance of the CNN model in Figure~\ref{fig:cnn-performance}. Shown in Figure~\ref{fig:cnn-correlation} is the kernel density estimation of the joint distribution of four pairs of variables: Jacobian rank vs.\ validation accuracy, Jacobian rank vs.\ robust validation accuracy, pull-back norm vs.\ validation accuracy, and pull-back norm vs.\ robust validation accuracy. Additionally, the Pearson correlation coefficient and p-value of a two-sided test are presented at the lower right corner of each point in Figure~\ref{fig:cnn-correlation}. We observe that the correlation between pull-back norms and downstream performances remains significant.

We conjecture that complex models, such as CNNs, are able to obtain good downstream performance even if the average pull-back norm is low, so long as it is not zero. Intuitively, when the feature information is indeed contained in the encoded representation but is not significantly pronounced, a simple model may not be able to extract this information, but a complex model, along with appropriate training techniques, can still learn to extract and utilize this information.

\section{Investigating which part of the data is highlighted by PH encodings}
\label{app:humanbody}
In this section, we demonstrate how our method facilitates investigating which \emph{part} of the data is the focus of PH encodings. 
To this end, we will introduce noising perturbation on different \emph{parts} of the point clouds and examine the average pull-back norm of these perturbation vector fields. 
Moreover, we also consider the beta weighting function for PIs and investigate the effects of the beta mean parameter on the pull-back geometry, which allows comparing the focus of long and short persistence intervals. 

\paragraph{Human body data} We utilize the benchmark mesh segmentation data \citep{Chen:2009:ABF}. This data set consists of meshes representing $19$ different types of $3$-dimensional shapes, each annotated with manually added segmentation labels. For our analysis, we focus on the subset of meshes representing the human body, which encompasses various gestures such as standing, walking, and sitting. 
We randomly subsample three point clouds from the vertices of each human body mesh, with each point cloud containing $N=500$ points.

\paragraph{PH encoding} We focus on the PH encoding that sends each point cloud to its 2-dimensional PI with respect to the Vietoris-Rips filtration. 
We choose $2$-dimensional PH because we note that the underlying geometric objects of human body meshes are $2-$dimensional surfaces whose $1$-dimensional homology is typically zero, and whose reduced $0$-dimensional homology is zero as well.
For the construction of PIs, we set the PI hyperparameters as $P=20, \gamma = 1\times 10^{-4},$ \emph{maximal\_edge\_legnth}$=0.3$, and the range of PI as $[0.0, 0.3] \times [0.0, 0.3]$. For the weighting function, we again employ the beta weighting function that we introduced in Section~\ref{sec: select-weight}. 
We set the variance parameter for the beta weighting function $s^2$ as $0.04$ and consider the mean parameter $k$ ranging from $0.1$ to $0.5$. 
We present a visualization of the impact of the mean parameter on the PIs in Figure~\ref{fig-illustration-human}.

\begin{figure}[h]
    \centering
    \includegraphics[width=.9\textwidth]{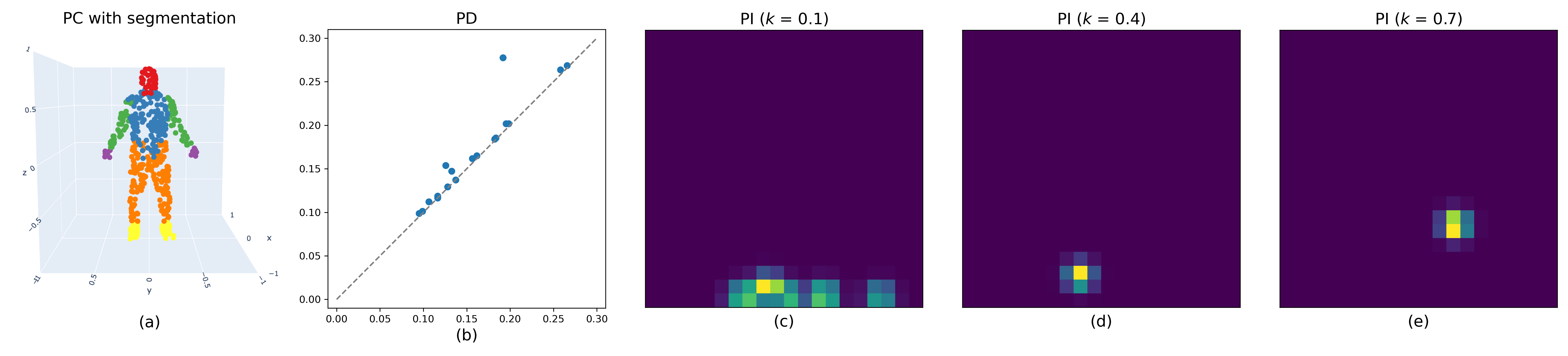}
    \caption{Illustration of the human body data set: (a) Point cloud with body parts segmentation; (b) 2-dimensional persistence diagram with respect to the Vietoris-Rips filtration; (c) to (e): persistence images obtained with beta functions with the beta mean parameter set as $0.1$, $0.4$, and $0.7$ respectively.}
    \label{fig-illustration-human}
\end{figure}

\paragraph{Perturbation} We merge the segmentation labels for the sampled human body point clouds into $6$ categories: head, torso, arms, hands, legs and feet (see panel (a) in Figure~\ref{fig-illustration-human} for an illustration). 
Accordingly, we consider $6$ types of perturbations, each adding independent Gaussian noise to points in one body part. 
We provide visualizations of some of the perturbations in Figure~\ref{fig-perturbation-human}. 

\begin{figure}[h]
    \centering
    \includegraphics[width=.9\textwidth]{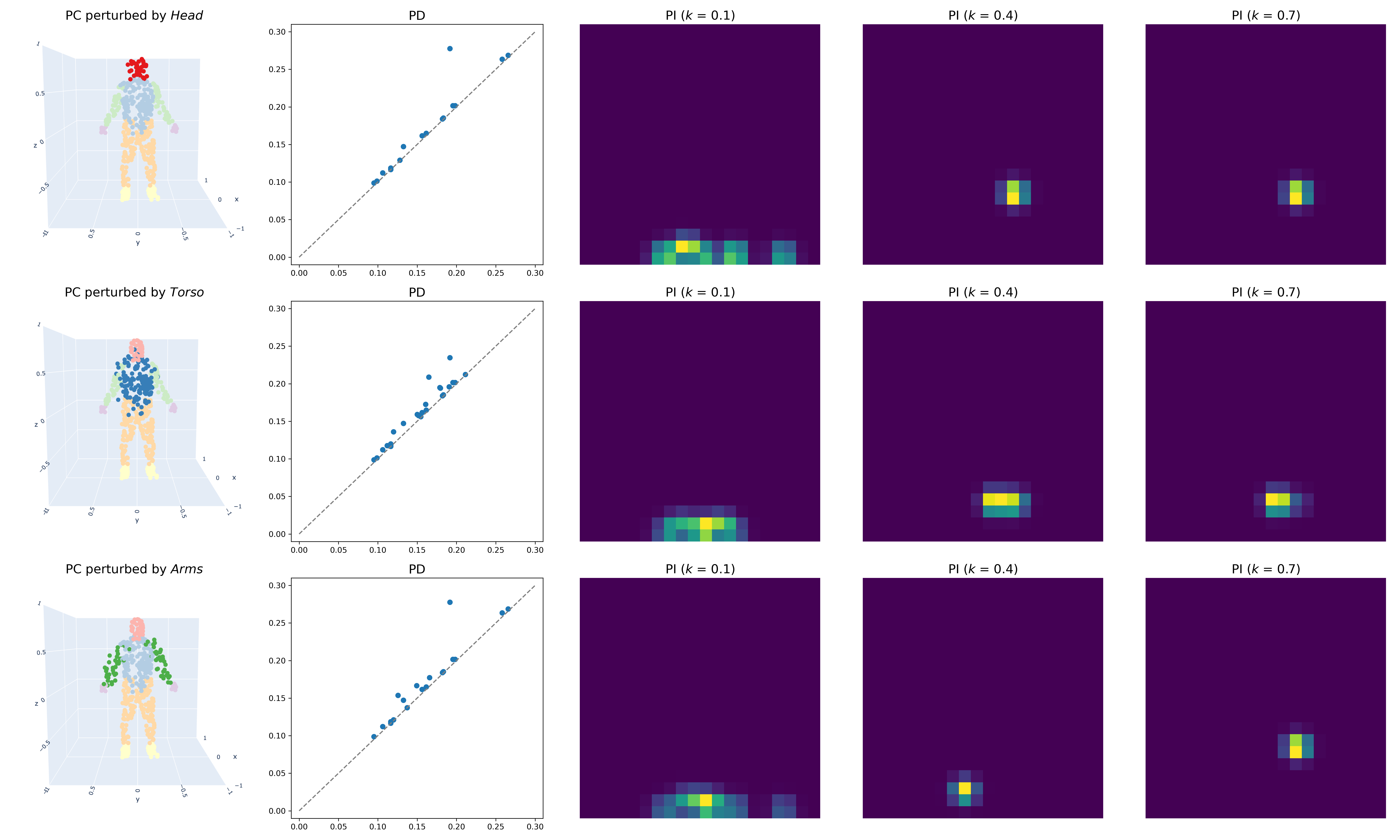}
    \caption{Illustration of the effects of noising perturbation on \textit{Head}, \textit{Torso}, and \textit{Arms} on the point cloud, PD and PIs with different weighting functions . }
    \label{fig-perturbation-human}
\end{figure}

\paragraph{Pull-back norm} We evaluate the average pull-back norms of the noising perturbation on $6$ body parts and present the results in Figure~\ref{fig-pbn-human}. 

It is noticeable in Figure~\ref{fig-pbn-human} that persistence intervals of varying lengths capture distinct aspects of the data. 
Specifically, when the mean parameter is set to $0.1$, the encoding exhibits significant sensitivity to perturbations on \textit{legs}, when $k=0.2$, the focus of the encoding switches to \emph{head}. For larger mean parameters, the encoding becomes most sensitive to perturbations on \textit{torso}. 
This could be explained by observing that shorter persistence intervals in 2-dimensional PDs on Rips filtration are more related to smaller hollow shapes in the data, such as arms and head, while longer intervals relate more to larger hollow shapes, such as the torso. 

These findings can also guide the selection of hyperparameters for PH encodings. For instance, in a face recognition task, we know from above that shorter persistence intervals are sensitive to data variations on \emph{head}. Hence, one should choose a value around $0.2$ for the beta mean parameter in order to obtain a data presentation that is most suitable for this task. 
At the same time, we note once again that long persistence intervals do not always contain the most important information and that the optimal choice of the weighting function (and other hyperparameters) depend on the specific task at hand. 

\begin{figure}[h]
    \centering
    \includegraphics[width=.85\textwidth]{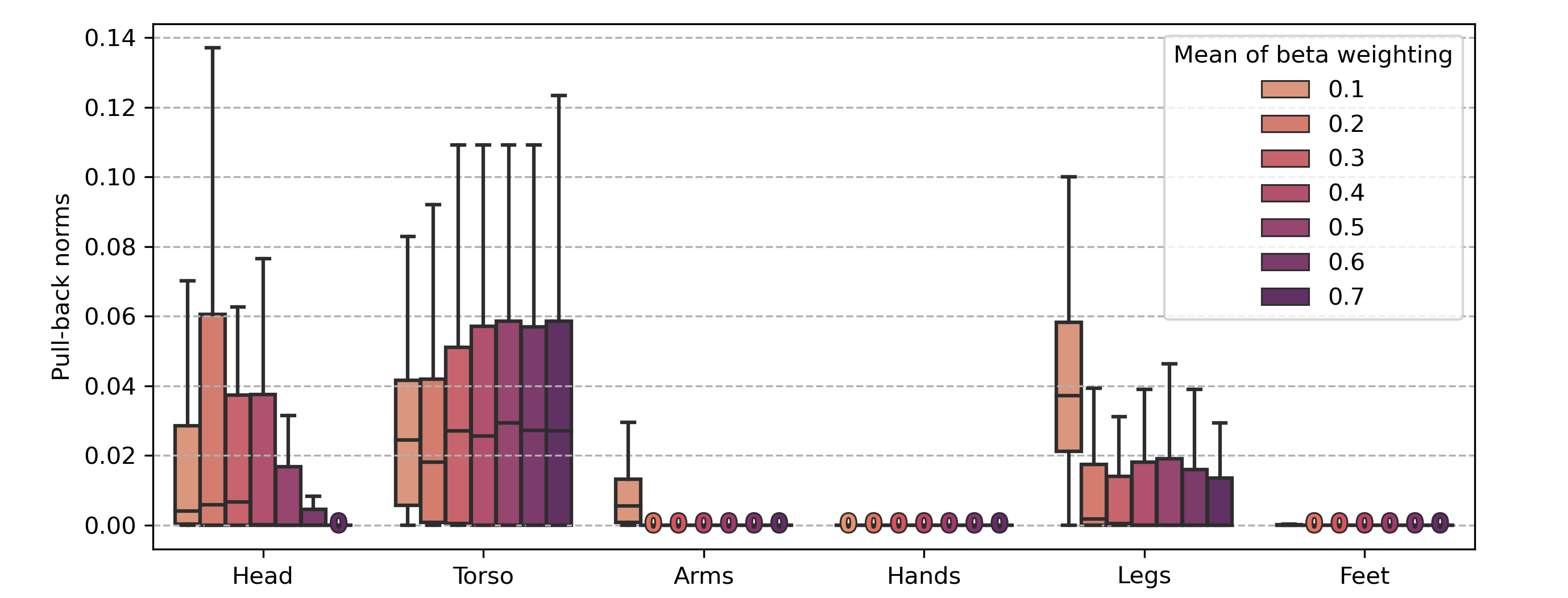}
    \caption{Averaged pull-back norms of noising perturbations on $6$ body parts.}
    \label{fig-pbn-human}
\end{figure}

\end{document}